\providecommand{\U}[1]{\protect\rule{.1in}{.1in}}
\providecommand{\U}[1]{\protect \rule{.1in}{.1in}}
\providecommand{\U}[1]{\protect \rule{.1in}{.1in}}
\newtheorem{theorem}{Theorem}
\newtheorem{definition}[theorem]{Definition}
\newtheorem{example}[theorem]{Example}
\newtheorem{lemma}[theorem]{Lemma}
\newtheorem{proposition}[theorem]{Proposition}
\newenvironment{proof}[1][Proof]{\noindent \textbf{#1.} }{\hfill  \rule{0.5em}{0.5em}}
\begin{document}

\title{Affine Gateaux Differentials and \\the von Mises Statistical Calculus}
\author{Simone Cerreia-Vioglio\thanks{Department of Decision Sciences and IGIER,
Università Bocconi, Via Roentgen, 1, Milan, Italy. E-mail:
simone.cerreia@unibocconi.it, fabio.maccheroni@unibocconi.it,
massimo.marinacci@unibocconi.it}
\and Fabio Maccheroni\footnotemark[1]
\and Massimo Marinacci\footnotemark[1]
\and Luigi Montrucchio\thanks{ESOMAS Department and Collegio Carlo Alberto,
Università di Torino, Corso Unione Sovietica, 218 Bis 10134, Turin,
Italy. Emails: luigi.montrucchio@unito.it, lorenzomaria.stanca@unito.it}
\and Lorenzo Stanca\footnotemark[2] }
\maketitle

\begin{abstract}
This paper presents a general study of one-dimensional differentiability for
functionals defined on convex domains that are not necessarily open. The local
approximation is carried out using affine functionals, as opposed to linear
functionals typically employed in standard Gateaux differentiability. This
affine notion of differentiability naturally arises in certain applications
and has been utilized by some authors in the statistics literature. We aim to
offer a unified and comprehensive perspective on this concept.

\end{abstract}

\textit{Key words} Affine Gateaux differentiability; Gateaux
differentiability; influence functions; robust statistics; envelope theorem;
multi-utility representation.

\textbf{MSC:} 49J50, 49J52, 26E15, 26B25.

\section{Introduction}

To study the asymptotic behavior of statistical functionals, Richard von Mises
developed a notion of directional derivability for functionals defined over
spaces of probability distributions.\footnote{See \cite[Part II]{mises} as
well as Reeds \cite{reed} and Fernholz \cite{fern}.} Since these domains lack
interior points, standard Gateaux differentiability is no longer adequate.

In this paper, we build upon von Mises's idea by studying functionals defined
over abstract convex sets. For a function $f:C\rightarrow\mathbb{R}$ defined
on a convex set $C$, the starting point is the directional derivative
\[
Df\left(  x;y\right)  =\lim_{t\downarrow0}\frac{f\left(  \left(  1-t\right)
x+ty\right)  -f\left(  x\right)  }{t}%
\]
at a point $x$ of $C$ along the direction $y$, where $y$ is another point of
$C$. The increment $f\left(  \left(  1-t\right)  x+ty\right)  -f\left(
x\right)  $ equals $f\left(  x+t\left(  y-x\right)  \right)  -f\left(
x\right)  $. Consequently, by the convexity of the domain $C$, the directional
derivative $Df\left(  x;y\right)  $ is essentially the classical directional
derivative restricted to the cone of feasible directions at the point $x$. We
use the term weak affine differential when the function $Df(x;\cdot)$ is
affine on $C$ (Definition \ref{def:aff}). For this basic notion, it is already
possible to prove a number of results. Yet, we reserve the term affine
differential for the important special case when the affine functional
$Df\left(  x;\cdot\right)  $ can be extended to the whole space, yielding a
notion of affine gradient. For example, the von Mises differential is simply
our affine differential for functionals whose domain is a space of probability distributions.

The early notion of differentiability introduced by von Mises has been used in
statistics (see Hampel \cite{hamp}, Huber \cite[pp.~34-40]{huber}, and
Fernholz \cite{fern}). A similar approach, again on spaces of distributions,
has also been used in risk theory (see Chew et al. \cite{chew1}). More
recently, a general formulation of Roy's identity in consumer theory has been
established through affine derivatives by \cite{green}. In summary, affine
differentiability naturally appears in several applications. Our purpose is to
provide a systematic analysis of this notion.

\paragraph{Outline}

In Section \ref{ch1}, we introduce affine differentiability. Notably, we
demonstrate that the Gateaux gradient and the affine gradient coincide only
over the algebraic interior of the domain, a set that is empty in relevant
applications. The main result, Lemma \ref{lmm:ccc}, shows that weakly affine
differentiable functions are hemidifferentiable, that is, differentiable over
line segments. A mean value theorem, Theorem \ref{cor:mean}, follows as a
consequence of this lemma. In Section \ref{ch2}, we discuss applications to
optimization. In particular, we establish a Danskin-Demyanov type envelope theorem.

Applications to economics and statistics are considered in Section \ref{ch3}.
In risk theory, Theorem \ref{th:jop} provides a global perspective to the
local expected utility analysis of Machina \cite{mach} (see also \cite{cmm}).
As an illustration, we compute the local utilities for the quadratic model of
Chew et al. \cite{chew} and for the prospect theory model of Tversky and
Kahneman \cite{kantv}. In a final example, we apply the envelope theorem of
Section \ref{ch2} to a Bayesian statistical problem.

Basic affine differentiation often proves too weak in applications. For this
reason, in Section \ref{ch5}, we present some stronger variants, non-trivial
versions of the classical Hadamard and Frechet differentiation in normed
vector spaces. We illustrate these notions with applications in risk theory
and statistics.

\section{Preliminaries}

Throughout $\left\langle X,X^{\ast}\right\rangle $ is a dual pair between two
vector spaces $X$ and $X^{\ast}$, with generic elements $x$ and $x^{\ast}$.
When $X$ is normed, $X^{\ast}$ is its topological dual, unless otherwise
stated. The pairing map is denoted by $\left\langle x,x^{\ast}\right\rangle $,
with $x\in X$ and $x^{\ast}\in X^{\ast}$. We refer to the $\sigma\left(
X,X^{\ast}\right)  $-topology as the weak topology of $X$, while we refer to
the $\sigma\left(  X^{\ast},X\right)  $-topology as the weak* topology of
$X^{\ast}$. Let $A$ be a subset of $X$. Its \emph{annihilator}, denoted by
$A^{\bot}$, is the set of all $x^{\ast}\in X^{\ast}$ that vanish on $A$, i.e.,
$\left\langle x,x^{\ast}\right\rangle =0$ for all $x\in A$. Clearly, $A^{\bot
}$ is a weak*-closed vector subspace of $X^{\ast}$.

Throughout $C$ denotes a convex subset of $X$. A point $x\in C$ is an
\emph{algebraic interior point} of $C$ if for every $y\in X$ there is
$\varepsilon>0$ such that $x+\varepsilon y\in C$. The \emph{algebraic
interior} of $C$ is denoted by $\operatorname*{cor}C$. A point $x\in C$ is an
(\emph{algebraic}) \emph{relative} \emph{interior point} of $C$ if, for every
$y\in\operatorname*{aff}C$ there is $\varepsilon>0$ such that $x+\varepsilon
y\in C$. The \emph{relative interior} of $C$ is denoted by $\operatorname*{ri}%
C$. For short, we call \emph{internal} the elements of $\operatorname*{ri}C$.
Clearly, $\operatorname*{cor}C\subseteq\operatorname*{ri}C$.

Let $Y$ be a topological space, typically assumed to be metrizable. The set of
all finite signed Borel measures is denoted by $ca\left(  Y\right)  $. The
subset $\Delta(Y)$ of $ca\left(  Y\right)  $ consists of all Borel probability
measures. We denote by $C_{b}\left(  Y\right)  $ the space of all bounded and
continuous functions on $Y$.

A function $f:C\rightarrow\mathbb{R}$ is \emph{affine} if $f\left(  tx+\left(
1-t\right)  y\right)  =tf\left(  x\right)  +\left(  1-t\right)  f\left(
y\right)  $ for all $x,y\in C$ and all $t\in\left[  0,1\right]  $.

\begin{definition}
An affine function $f:C\rightarrow\mathbb{R}$ is \emph{extendable} when it
admits a weakly continuous linear extension to the whole space $X$, that is,
when there exist $x^{\ast}\in X^{\ast}$ and $\gamma\in\mathbb{R}$ such that
$f\left(  \cdot\right)  =\left\langle \cdot,x^{\ast}\right\rangle +\gamma$.
\end{definition}

As is well-known, affine functionals on $C$ are not always extendable, even
when they are weakly continuous (cf. Example \ref{ex:hilb}). Furthermore, when
an extension of an affine functional does exist, it may not be unique.

\begin{proposition}
\label{prop:aff}Let $f\left(  \cdot\right)  =\left\langle \cdot,x^{\ast
}\right\rangle +\gamma$ on $X$, with $\gamma\in\mathbb{R}$. An affine
functional $g:X\rightarrow\mathbb{R}$ agrees with $f$ on $C$ if and only if
$g\left(  \cdot\right)  =\left\langle \cdot,y^{\ast}\right\rangle
+\gamma_{y^{\ast}}$, where $y^{\ast}\in x^{\ast}+\left(  C-C\right)  ^{\bot}$
and $\gamma_{y^{\ast}}=f\left(  \bar{x}\right)  -\left\langle \bar{x},y^{\ast
}\right\rangle $, with $\bar{x}\in C$ arbitrarily fixed.
\end{proposition}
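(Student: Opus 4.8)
The plan is to reduce the agreement $g|_C = f|_C$ to a single membership condition on $y^* - x^*$, using the elementary fact that a weakly continuous linear functional is constant on $C$ exactly when it annihilates $C - C$. Since extendability concerns weakly continuous affine extensions, I take $g$ in the form $g(\cdot) = \langle \cdot, y^* \rangle + \delta$ for some $y^* \in X^*$ and $\delta \in \mathbb{R}$; the content of the proposition is then to describe the admissible pairs $(y^*, \delta)$.

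First I would write out the agreement condition. For every $x \in C$ the equality $g(x) = f(x)$ reads
\[
\langle x, y^* - x^* \rangle = \gamma - \delta \qquad \text{for all } x \in C ,
\]
so the problem reduces to characterizing when the linear functional $\langle \cdot, y^* - x^* \rangle$ is identically equal to the constant $\gamma - \delta$ on $C$.

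Next I would invoke the key observation: a linear functional $\langle \cdot, z^* \rangle$ is constant on $C$ if and only if $\langle x_1 - x_2, z^* \rangle = 0$ for all $x_1, x_2 \in C$, i.e. $z^* \in (C-C)^\perp$. Applying this to $z^* = y^* - x^*$ shows that the displayed identity can hold only if $y^* - x^* \in (C-C)^\perp$, that is $y^* \in x^* + (C-C)^\perp$; this is the first half of the description. Granting this, the common constant value of $\langle \cdot, y^* - x^* \rangle$ on $C$ may be evaluated at any fixed $\bar{x} \in C$, and matching it with $\gamma - \delta$ determines $\delta$ uniquely. Substituting $f(\bar{x}) = \langle \bar{x}, x^* \rangle + \gamma$ then gives $\delta = f(\bar{x}) - \langle \bar{x}, y^* \rangle = \gamma_{y^*}$. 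The converse is obtained by reversing these steps: any $g$ of the stated form annihilates $C - C$ in its linear part and hence restricts to $f$ on $C$.

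This argument is a direct verification rather than a result with a genuine obstacle; the only point demanding care is the well-definedness of $\gamma_{y^*}$, namely its independence of the auxiliary point $\bar{x}$. For $\bar{x}, \bar{x}' \in C$ the difference $[f(\bar{x}) - \langle \bar{x}, y^* \rangle] - [f(\bar{x}') - \langle \bar{x}', y^* \rangle]$ equals $\langle \bar{x} - \bar{x}', x^* - y^* \rangle$, which vanishes precisely because $\bar{x} - \bar{x}' \in C - C$ while $x^* - y^* \in (C-C)^\perp$. This consistency is exactly what legitimizes the phrase ``$\bar{x} \in C$ arbitrarily fixed'' in the statement.
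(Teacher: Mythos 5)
Your argument is correct and follows essentially the same route as the paper's: both reduce the agreement $g|_C=f|_C$ to the statement that $\langle\cdot,y^*-x^*\rangle$ is constant on $C$, identify this with $y^*-x^*\in(C-C)^\perp$, and then solve for the constant term at a fixed $\bar x\in C$ (your explicit check that $\gamma_{y^*}$ does not depend on $\bar x$ is a small welcome addition the paper leaves implicit).
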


\noindent\textbf{Proof} Let $g\left(  \cdot\right)  =\left\langle
\cdot,y^{\ast}\right\rangle +\gamma_{1}$ be an affine functional on $X$ such
that, for each $x\in C$, $f\left(  x\right)  =g\left(  x\right)  $. This
implies that $\left\langle x,x^{\ast}-y^{\ast}\right\rangle $ is constant when
$x$ runs over $C$. That is, $\left\langle x-y,y^{\ast}-x^{\ast}\right\rangle
=0$ when $x$ and $y$ are distinct points of $C$. Hence, $y^{\ast}-x^{\ast}%
\in\left(  C-C\right)  ^{\bot}$. Equivalently, $y^{\ast}\in x^{\ast}+\left(
C-C\right)  ^{\bot}$. From $f\left(  \bar{x}\right)  =\left\langle \bar
{x},y^{\ast}\right\rangle +\gamma_{1}$ it follows that $\gamma_{1}=f\left(
\bar{x}\right)  -\left\langle \bar{x},y^{\ast}\right\rangle $.

To prove the converse, let $g\left(  \cdot\right)  =\left\langle \cdot
,y^{\ast}\right\rangle +\gamma_{y^{\ast}}$ be with $y^{\ast}$ and
$\gamma_{y^{\ast}}$ given above. Tedious algebra shows that%
\[
g\left(  x\right)  =\left\langle x-\bar{x}+\bar{x},y^{\ast}-x^{\ast}+x^{\ast
}\right\rangle +f\left(  \bar{x}\right)  -\left\langle \bar{x},y^{\ast
}\right\rangle =\left\langle x,x^{\ast}\right\rangle +\gamma=f\left(
x\right)
\]
as desired.\hfill$\blacksquare$

\bigskip

Thus, the representation of an affine functional on $C$ is unique if and only
if $\left(  C-C\right)  ^{\bot}=\left\{  0\right\}  $. Since%
\begin{equation}
\left(  C-C\right)  ^{\bot}=\left(  C-x\right)  ^{\bot} \label{eq:kkk}%
\end{equation}
holds for any arbitrary point $x\in C$, we have $\left(  C-C\right)  ^{\bot
}=\left\{  0\right\}  $ when $\operatorname*{cor}C\neq\emptyset$. Another
example, which gives a unique extension (if any), is the case of an affine
functional defined on a reproducing convex cone $K$. Clearly $\left(
K-K\right)  ^{\bot}=X^{\bot}=\left\{  0\right\}  $.

Uniquely extendable affine functionals can be characterized in
finite-dimensional spaces, as the next more or less known result shows.

\begin{proposition}
\label{prop:zac}An affine functional $f:C\subseteq\mathbb{R}^{n}%
\rightarrow\mathbb{R}$ is extendable. Its extension is unique if and only if
$\operatorname*{int}C\neq\emptyset$.
\end{proposition}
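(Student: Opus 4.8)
The plan is to establish the two assertions separately: first that every affine $f$ on a convex $C\subseteq\mathbb{R}^{n}$ extends to the whole space, and then that this extension is unique precisely when $\operatorname*{int}C\neq\emptyset$. The extendability statement is purely algebraic once one notices that in finite dimensions the weak topology coincides with the norm topology, so \emph{any} linear functional on $\mathbb{R}^{n}$ is automatically weakly continuous; thus it suffices to produce an affine functional on $\mathbb{R}^{n}$ of the form $\langle\cdot,x^{\ast}\rangle+\gamma$ agreeing with $f$ on $C$. I may assume $C\neq\emptyset$, since otherwise the claim is vacuous.

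For the construction I would fix a base point $\bar{x}\in C$, pass to the convex set $D=C-\bar{x}\ni 0$, and study $\tilde{f}(v)=f(\bar{x}+v)-f(\bar{x})$, which is affine on $D$ with $\tilde{f}(0)=0$. Writing $L=\operatorname*{span}D=\operatorname*{aff}C-\bar{x}$ for the linear subspace parallel to the affine hull, the goal becomes to show that $\tilde{f}$ is the restriction to $D$ of a linear functional on $L$. The cleanest route is to extend $\tilde{f}$ to the cone $\operatorname*{cone}D=\{\lambda d:\lambda\geq0,\ d\in D\}$ by setting $\phi(\lambda d)=\lambda\tilde{f}(d)$. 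Affineness yields the positive homogeneity $\tilde{f}(td)=t\tilde{f}(d)$ for $t\in[0,1]$ (take the convex combination of $d$ and $0$), which makes $\phi$ well defined; and for two cone elements $\lambda_{1}d_{1},\lambda_{2}d_{2}$ the convexity of $D$ together with affineness of $\tilde{f}$ applied to the convex combination $\frac{\lambda_{1}d_{1}+\lambda_{2}d_{2}}{\lambda_{1}+\lambda_{2}}\in D$ gives the additivity $\phi(\lambda_{1}d_{1}+\lambda_{2}d_{2})=\lambda_{1}\tilde{f}(d_{1})+\lambda_{2}\tilde{f}(d_{2})$. A positively homogeneous, additive map on a convex cone that spans $L$ extends uniquely to a linear functional on $L$, and any linear functional on a subspace of $\mathbb{R}^{n}$ extends to all of $\mathbb{R}^{n}$. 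Translating back, $x\mapsto\phi(x-\bar{x})+f(\bar{x})$ is the desired affine extension.

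For the uniqueness part I would invoke Proposition \ref{prop:aff}: having already produced a representation $\langle\cdot,x^{\ast}\rangle+\gamma$ of $f$, its extension is unique if and only if $(C-C)^{\bot}=\{0\}$. Since an annihilator depends only on the linear span of the set, \eqref{eq:kkk} gives $(C-C)^{\bot}=(C-\bar{x})^{\bot}=L^{\bot}$, where $L=\operatorname*{span}(C-\bar{x})=\operatorname*{aff}C-\bar{x}$. Thus $(C-C)^{\bot}=\{0\}$ exactly when $L=\mathbb{R}^{n}$, i.e. when $\operatorname*{aff}C=\mathbb{R}^{n}$. It then remains to record the standard fact that a nonempty convex subset of $\mathbb{R}^{n}$ satisfies $\operatorname*{aff}C=\mathbb{R}^{n}$ if and only if it has nonempty interior; combined with the previous equivalences this yields uniqueness $\iff\operatorname*{int}C\neq\emptyset$.

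The genuinely substantive step — the one to handle carefully — is the well-definedness and additivity of $\phi$, i.e. showing that an affine functional on $C$ preserves not merely convex combinations but every affine combination of points of $C$ that happens to land back in $C$; this is exactly what licenses the passage from $D$ to the full subspace $L$. Everything else is either automatic (continuity, because $\dim X<\infty$) or a citation of the elementary interior/affine-hull dichotomy for convex sets in $\mathbb{R}^{n}$, which itself rests on the nonemptiness of the relative interior in finite dimensions.
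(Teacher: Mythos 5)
Your proof is correct and complete. Note that the paper presents Proposition \ref{prop:zac} as a ``more or less known result'' and supplies no proof at all, so there is nothing of the authors' to compare yours against; judged on its own terms, your argument is a clean and standard way to establish it. The one step you assert without verification --- that an additive, positively homogeneous $\phi$ on the cone $K=\operatorname*{cone}D$ extends to a linear functional on $L=\operatorname*{span}D$ --- is immediate and worth recording: since $K$ is a convex cone, $K-K=\operatorname*{span}K=L$, and $\Phi(u-v)=\phi(u)-\phi(v)$ is well defined (if $u-v=u'-v'$ then $u+v'=u'+v$, so additivity gives $\phi(u)+\phi(v')=\phi(u')+\phi(v)$) and is linear by additivity together with positive homogeneity. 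Your well-definedness check for $\phi$ itself (reducing $\lambda_{1}d_{1}=\lambda_{2}d_{2}$ to the homogeneity $\tilde{f}(td)=t\tilde{f}(d)$ for $t\in[0,1]$, which uses $0\in D$) and the two-term additivity computation are both sound, as is the observation that in $\mathbb{R}^{n}$ every linear functional is automatically weakly continuous. The uniqueness half is also right: by Proposition \ref{prop:aff} and (\ref{eq:kkk}) the set of extensions is $x^{\ast}+L^{\bot}$, and $L^{\bot}=\{0\}$ iff $\operatorname*{aff}C=\mathbb{R}^{n}$ iff $\operatorname*{int}C\neq\emptyset$, the last equivalence because a full-dimensional convex set contains a nondegenerate simplex and hence interior points.
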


In normed vector spaces a much less general result holds.

\begin{proposition}
\label{prop:xet}An affine functional $f:C\rightarrow\mathbb{R}$ defined on a
convex set $C$ of a normed vector space $X$ is uniquely extendable if
$\operatorname*{int}C\neq\emptyset$. The extension is weakly continuous when
$f$ is locally bounded around to some point of $\operatorname*{int}C$.
\end{proposition}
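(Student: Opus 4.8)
The plan is to prove the two assertions in turn: first that $\operatorname*{int}C\neq\emptyset$ forces $f$ to admit exactly one affine extension to $X$, and then that this extension is weakly continuous under the stated boundedness hypothesis. Throughout I fix a base point $\bar{x}\in\operatorname*{int}C$ and a radius $r>0$ with $B\left(\bar{x},r\right)\subseteq C$, so that the centered set $D:=C-\bar{x}$ is convex, contains $0$ as an interior point, and in particular contains the ball $B\left(0,r\right)$; hence $D$ spans $X$. It is convenient to center the functional as well, writing $\phi\left(u\right)=f\left(\bar{x}+u\right)-f\left(\bar{x}\right)$ for $u\in D$, which is affine on $D$ with $\phi\left(0\right)=0$.

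The crux of the existence part is to manufacture a linear functional $L$ on all of $X$ that agrees with $\phi$ on $D$. I would define it radially: for $z\in X$ pick $t>0$ small enough that $tz\in B\left(0,r\right)\subseteq D$ and set $L\left(z\right)=\phi\left(tz\right)/t$. Since $\phi$ is affine and $\phi\left(0\right)=0$, the identity $\phi\left(sz\right)=s\phi\left(z\right)$ holds whenever $z$ and $sz$ both lie in $D$ (for $s\in\left[0,1\right]$ write $sz$ as a convex combination of $0$ and $z$; for $s>1$ reverse the roles), and this yields both the well-definedness of $L$ (independence of $t$) and its positive homogeneity. Additivity is the delicate point and must first be secured locally: for $u,v\in B\left(0,r/2\right)$ the vectors $\tfrac{u+v}{2}$ and $u+v$ also lie in $D$, so the affine midpoint identity $\phi\left(\tfrac{u+v}{2}\right)=\tfrac12\left(\phi\left(u\right)+\phi\left(v\right)\right)$ combined with homogeneity gives $\phi\left(u+v\right)=\phi\left(u\right)+\phi\left(v\right)$; homogeneity then propagates additivity to all of $X$. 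Setting $g\left(x\right)=L\left(x-\bar{x}\right)+f\left(\bar{x}\right)$ produces the desired affine extension. For uniqueness, note that the difference of any two affine extensions is affine and vanishes on $C$, hence its linear part annihilates $C-C$; since $C-C\supseteq B\left(0,r\right)$ spans $X$, that linear part is identically zero, so the extension is unique. This is the algebraic sharpening of the remark, based on \eqref{eq:kkk}, that $\operatorname*{cor}C\neq\emptyset$ already forces $\left(C-C\right)^{\bot}=\left\{0\right\}$.

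Finally, for weak continuity I would invoke the boundedness hypothesis: suppose $f$ is bounded on a ball $B\left(x_{0},\delta\right)\subseteq C$ with $x_{0}\in\operatorname*{int}C$. Because $g$ restricted to this ball equals $f$ and differs from $L$ only by the additive constant coming from $g\left(x\right)=L\left(x-\bar{x}\right)+f\left(\bar{x}\right)$, the linear functional $L$ is bounded on the ball $B\left(0,\delta\right)$, i.e.\ on a neighborhood of the origin. A linear functional bounded on a nonempty open set is norm-continuous, so $L\in X^{\ast}$; and since the weakly continuous linear functionals on a normed space are precisely the elements of $X^{\ast}$, the extension $g\left(\cdot\right)=\left\langle\cdot,x^{\ast}\right\rangle+\gamma$ with $x^{\ast}=L$ and $\gamma=f\left(\bar{x}\right)-\left\langle\bar{x},x^{\ast}\right\rangle$ is weakly continuous, so that $f$ is extendable in the precise sense of the above definition. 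The main obstacle in the whole argument is the construction of $L$ — concretely, extracting genuine additivity from affinity, which only supplies convex-combination identities on $D$ and therefore must be established first for small vectors and then globalized via homogeneity; the continuity step, by contrast, is the standard fact that local boundedness upgrades a linear functional to a continuous one.
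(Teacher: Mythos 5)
The paper states this proposition without proof, so there is nothing to compare against; judged on its own, your argument is correct and complete. The radial construction of $L$ is the right device: the scaling identity $\phi(sz)=s\phi(z)$ for $z,sz\in D$ follows exactly as you say from writing the shorter vector as a convex combination of $0$ and the longer one, the midpoint trick legitimately upgrades affinity to additivity on $B(0,r/2)$, and homogeneity then globalizes additivity, giving a genuine linear $L$ agreeing with $\phi$ on all of $D$ (not just near $0$, by the same scaling identity). Uniqueness via $(C-C)^{\bot}=\{0\}$ when $C-C$ contains a ball is the correct sharpening of the remark following (\ref{eq:kkk}). One small imprecision: boundedness of $f$ on $B(x_{0},\delta)$ gives boundedness of $L$ on the translated ball $B(x_{0}-\bar{x},\delta)$, not on $B(0,\delta)$ directly; but the general fact you invoke (a linear functional bounded on any nonempty open set is norm-continuous) absorbs this, since $L(v)=L(w+v)-L(w)$ transfers the bound to a neighborhood of the origin. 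The final identification of weakly continuous linear functionals with $X^{\ast}$ closes the argument as required by the paper's definition of extendability.
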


\section{Affine differentiability\label{ch1}}

\subsection{Differential}

We begin with the protagonist of our analysis.

\begin{definition}
\label{def:dir}The \emph{affine directional} \emph{derivative} of a functional
$f:C\rightarrow\mathbb{R}$ at a point $x\in C$ along the point $y\in C$ is
given by
\[
Df\left(  x;y\right)  =\lim_{t\downarrow0}\frac{f\left(  \left(  1-t\right)
x+ty\right)  -f\left(  x\right)  }{t}%
\]
when this limit exists and is finite.
\end{definition}

When this limit exists finite for all $y\in C$, the map $Df\left(
x;\cdot\right)  :C\rightarrow\mathbb{R}$ is well defined and satisfies the
following properties:

\begin{enumerate}
\item[(i)] $Df\left(  x;x\right)  =0$;

\item[(ii)] $Df\left(  x;\cdot\right)  $ is homogeneous, i.e.,
\begin{equation}
Df\left(  x;\left(  1-\alpha\right)  x+\alpha y\right)  =\alpha Df\left(
x;y\right)  \label{eq:sat}%
\end{equation}
for all $y\in C$ and all $\alpha\in\left[  0,1\right]  $.
\end{enumerate}

The map $Df\left(  x;\cdot\right)  $ is, in general, not affine (see Example
\ref{ex:nonaff} below), a failure that motivates the following taxonomy.

\begin{definition}
\label{def:aff}A functional $f:C\rightarrow\mathbb{\mathbb{R}}$ is:

\begin{enumerate}
\item[(i)] \emph{weakly} \emph{affinely differentiable }at\emph{ }$x\in C$, if
$Df\left(  x;\cdot\right)  :C\rightarrow\mathbb{R}$ is affine;

\item[(ii)] \emph{affinely differentiable }if $Df\left(  x;\cdot\right)
:C\rightarrow\mathbb{R}$ is an extendable affine functional.
\end{enumerate}
\end{definition}

Weak affine differentiability (for short, wa-differentiability) does not
require any vector topology, which is instead needed for affine
differentiability (for short, a-differentiability). We call the map $Df\left(
x;\cdot\right)  :C\rightarrow\mathbb{R}$ the \emph{wa-differential} of $f$ at
$x$ when it is affine.

When $Df\left(  x;\cdot\right)  $ is extendable, we call it the
\emph{a-differential} of $f$ at $x$; in this case, there is a pair $\left(
x^{\ast},\gamma\right)  \in X^{\ast}\times\mathbb{R}$ such that%
\begin{equation}
Df\left(  x;\cdot\right)  =\left\langle \cdot,x^{\ast}\right\rangle
+\gamma\label{eq:hup}%
\end{equation}
This pair is not unique unless $\left(  C-C\right)  ^{\bot}=\left\{
0\right\}  $. Since $Df\left(  x;x\right)  =0$, it follows that $\gamma
=-\left\langle x,x^{\ast}\right\rangle $ and so the affine differential admits
the intrinsic representation
\begin{equation}
Df\left(  x;y\right)  =\left\langle y-x,x^{\ast}\right\rangle \label{eq:hju}%
\end{equation}
where the inessential scalar $\gamma$ has been dropped. In light of
(\ref{eq:kkk}), equation (\ref{eq:hju}) is independent of the element
$x^{\ast}$ chosen. With this, we call \emph{affine gradient }of\emph{ }$f$ at
$x$ any such equivalent $x^{\ast}$. As it is unique up to elements in $\left(
C-C\right)  ^{\bot}$, we have an equivalence class%
\begin{equation}
\left[  \nabla_{a}f\left(  x\right)  \right]  =x^{\ast}+\left(  C-C\right)
^{\perp}=x^{\ast}+\left(  C-x\right)  ^{\perp}\in X^{\ast}/\left(  C-C\right)
^{\perp} \label{eq:kuku}%
\end{equation}
which is a weak*-closed affine subspace of $X^{\ast}$. Here $\nabla
_{a}f\left(  x\right)  $ is a representative element of this equivalent class,
so it stands for any element $x^{\ast}$ of this equivalence class. With this,
we can write (\ref{eq:hju}) as%
\begin{equation}
Df\left(  x;y\right)  =\left\langle y-x,\nabla_{a}f\left(  x\right)
\right\rangle \label{eq:FFT}%
\end{equation}

\begin{example}
\label{ex:nonaff}\emph{Define} $f:\mathbb{R}_{+}^{2}\rightarrow\mathbb{R}$
\emph{by} $f\left(  x_{1},x_{2}\right)  =x_{1}^{\alpha}x_{2}^{\beta}$\emph{,
with }$\alpha,\beta>0$. \emph{When }$\alpha+\beta<1$\emph{,} \emph{the affine
directional derivative at the origin }$\mathbf{0}=\left(  0,0\right)  $\emph{
does not exist. Instead, it exists when }$\alpha+\beta=1$\emph{, with
}$Df\left(  \mathbf{0};y_{1},y_{2}\right)  =f\left(  y_{1},y_{2}\right)  $.
\emph{But, }$f$\emph{ is not wa-differentiable at} $\mathbf{0}$ \emph{since
}$Df\left(  \mathbf{0};\cdot\right)  $\emph{ is not affine. It is easy to see
that }$f$\emph{ is a-differentiable at }$\mathbf{0}$\emph{ when }$\alpha
+\beta>1$\emph{ and the zero functional is} \emph{its unique gradient}%
.\hfill$\blacktriangle$
\end{example}

It is desirable to have criteria ensuring a-differentiability. The following
conditions are a direct consequence of Propositions \ref{prop:zac} and
\ref{prop:xet}.

\begin{proposition}
\label{prop:has}Let $f:C\rightarrow\mathbb{R}$ be wa-differentiable at a point
$x\in C$. Then, $f$ is a-differentiable at $x$ if either (i) $X=\mathbb{R}%
^{n}$ or (ii) $X$ is normed, $x\in\operatorname*{int}C$, and $f$ is locally
Lipschitz at $x$.
\end{proposition}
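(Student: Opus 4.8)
The plan is to reduce the statement to the two already-established propositions by verifying their hypotheses in each case. Since $f$ is assumed wa-differentiable at $x$, the map $Df(x;\cdot):C\to\mathbb{R}$ is affine, and the entire task is to show that this particular affine functional is \emph{extendable}, i.e., that it admits a weakly continuous linear extension to $X$. I would state clearly at the outset that a-differentiability is precisely wa-differentiability plus extendability of the affine differential, so only extendability remains to be checked.

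Let me think about the structure. For case (i), $X=\mathbb{R}^n$, Proposition \ref{prop:zac} asserts that \emph{every} affine functional on a convex subset $C\subseteq\mathbb{R}^n$ is extendable. Thus I would simply apply Proposition \ref{prop:zac} to the affine functional $Df(x;\cdot)$ and conclude immediately that it extends, hence $f$ is a-differentiable at $x$. (In finite dimensions weak continuity of a linear functional is automatic, so there is nothing further to verify.) This case is essentially a one-line invocation.

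For case (ii), where $X$ is normed and $x\in\operatorname*{int}C$, I would appeal to Proposition \ref{prop:xet}. That proposition gives unique extendability of an affine functional on $C$ whenever $\operatorname*{int}C\neq\emptyset$, and weak continuity of the extension provided $f$ is locally bounded around some interior point. Here the interior is nonempty since $x\in\operatorname*{int}C$, so the affine functional $Df(x;\cdot)$ is uniquely extendable. The remaining point is weak continuity, for which I need local boundedness of $Df(x;\cdot)$ near an interior point. This is where the local Lipschitz hypothesis on $f$ enters: I would argue that local Lipschitz continuity of $f$ at $x$ forces the difference quotients to be uniformly bounded near $x$, so that the affine differential $Df(x;\cdot)$ inherits local boundedness around $x\in\operatorname*{int}C$. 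Concretely, if $f$ is Lipschitz with constant $L$ on a neighborhood of $x$, then $|f((1-t)x+ty)-f(x)|\le t\,L\,\|y-x\|$ for small $t$, whence $|Df(x;y)|\le L\|y-x\|$ for $y$ in a neighborhood of $x$; this bound yields local boundedness of the affine functional, and Proposition \ref{prop:xet} then delivers a weakly continuous extension.

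The main obstacle I anticipate is the weak continuity in case (ii): extendability as a plain linear functional is handed to us by $\operatorname*{int}C\neq\emptyset$, but upgrading to \emph{weak} continuity is the substantive requirement of the definition of a-differentiability, and it is exactly there that the local Lipschitz hypothesis must be converted into the local boundedness condition that Proposition \ref{prop:xet} demands. I would therefore spend the bulk of the argument on the clean estimate $|Df(x;y)|\le L\|y-x\|$ for $y$ near $x$, making sure the Lipschitz neighborhood is chosen so that the segment $[x,(1-t)x+ty]$ stays inside it for all small $t$, and then simply cite Proposition \ref{prop:xet} to finish.
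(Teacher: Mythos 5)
Your proposal is correct and follows essentially the same route as the paper: case (i) is an immediate application of Proposition \ref{prop:zac}, and case (ii) derives the bound $|Df(x;y)|\leq L\|y-x\|$ from the local Lipschitz hypothesis to obtain the local boundedness required by Proposition \ref{prop:xet}. Your additional care about choosing the Lipschitz neighborhood so the segment stays inside it is a reasonable elaboration of the same argument.
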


\noindent\textbf{Proof} The case $X=\mathbb{R}^{n}$ follows from Proposition
\ref{prop:zac}. When $X$ is normed, the local Lipschitz condition at
$x\in\operatorname*{int}C$ implies that $\left\vert Df\left(  x;y\right)
\right\vert \leq L\left\Vert y-x\right\Vert $ and therefore $Df\left(
x;\cdot\right)  $ is locally bounded at $x$. Therefore, Proposition
\ref{prop:xet} yields the desired result.\hfill$\blacksquare$

\bigskip

When $f:C\rightarrow\mathbb{R}$ is Gateaux differentiable at $x\in C$, the
\emph{Gateaux gradient }$\nabla_{G}f\left(  x\right)  \in X^{\ast}$ is given
by%
\[
\lim_{t\rightarrow0^{+}}\frac{f\left(  x+ty\right)  -f\left(  x\right)  }%
{t}=\left\langle y,\nabla_{G}f\left(  x\right)  \right\rangle \qquad\forall
y\in X
\]
The next result shows that affine and Gateaux differentiability are equivalent
only on $\operatorname*{cor}C$, a set that in most relevant cases is empty.

\begin{proposition}
\label{equivalence} The function $f:C\rightarrow\mathbb{R}$ is
a-differentiable at $x\in\operatorname*{cor}C$ if and only if it is Gateaux
differentiable at $x$. In this case, $\left[  \nabla_{a}f\left(  x\right)
\right]  \equiv\left\{  \nabla_{G}f\left(  x\right)  \right\}  $.
\end{proposition}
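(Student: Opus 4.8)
The plan is to exploit the elementary relation between the affine directional derivative $Df\left( x;\cdot\right) $ and the classical Gateaux directional derivative, a relation that becomes a clean reparametrization precisely because $x$ is an algebraic interior point. Writing $\left( 1-t\right) x+ty=x+t\left( y-x\right) $, one sees that $Df\left( x;y\right) $ is the one-sided derivative at $t=0$ of $t\mapsto f\left( x+t\left( y-x\right) \right) $, that is, the Gateaux derivative of $f$ at $x$ in the direction $y-x$. The whole argument rests on the observation that, since $x\in\operatorname*{cor}C$, every direction $v\in X$ is reachable inside $C$: there is $\varepsilon>0$ with $y:=x+\varepsilon v\in C$, and then $y-x=\varepsilon v$.

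For the implication ``Gateaux $\Rightarrow$ a-differentiable,'' I would assume $\nabla_{G}f\left( x\right) $ exists and substitute $v=y-x$ to obtain $Df\left( x;y\right) =\left\langle y-x,\nabla_{G}f\left( x\right) \right\rangle $ for every $y\in C$. This exhibits $Df\left( x;\cdot\right) $ as the restriction to $C$ of the weakly continuous functional $\left\langle \cdot,\nabla_{G}f\left( x\right) \right\rangle $, so $Df\left( x;\cdot\right) $ is an extendable affine functional and $f$ is a-differentiable, with $\nabla_{G}f\left( x\right) $ serving as a representative of the affine gradient.

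For the converse, a-differentiability supplies, via (\ref{eq:hju}), an $x^{\ast}\in X^{\ast}$ with $Df\left( x;y\right) =\left\langle y-x,x^{\ast}\right\rangle $ for all $y\in C$. Fixing an arbitrary $v\in X$, choosing $\varepsilon>0$ with $y=x+\varepsilon v\in C$, and reparametrizing $s=t\varepsilon$ in the defining limit gives
\[
Df\left( x;y\right) =\varepsilon\lim_{s\downarrow0}\frac{f\left( x+sv\right) -f\left( x\right) }{s}.
\]
Because the left-hand side exists (a-differentiability guarantees $Df\left( x;\cdot\right) $ is defined throughout $C$, in particular at $y$), the one-sided Gateaux limit in the direction $v$ exists and equals $\varepsilon^{-1}Df\left( x;y\right) =\left\langle v,x^{\ast}\right\rangle $. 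Since this holds for every $v\in X$ and $v\mapsto\left\langle v,x^{\ast}\right\rangle $ is linear and weakly continuous, $f$ is Gateaux differentiable with $\nabla_{G}f\left( x\right) =x^{\ast}$.

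It remains to identify the gradients. By (\ref{eq:kuku}) the affine gradient class is $x^{\ast}+\left( C-C\right) ^{\perp}$, and since $\operatorname*{cor}C\neq\emptyset$ (it contains $x$) the excerpt already records that $\left( C-C\right) ^{\perp}=\left\{ 0\right\} $; hence the class collapses to the single point $x^{\ast}=\nabla_{G}f\left( x\right) $, giving $\left[ \nabla_{a}f\left( x\right) \right] \equiv\left\{ \nabla_{G}f\left( x\right) \right\} $. The one step I would watch most carefully is the reparametrization in the converse: one must verify that the existence of the affine limit $Df\left( x;y\right) $, which is part of the hypothesis, transfers to the existence of the Gateaux limit in the direction $v$, rather than tacitly assuming the latter. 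Everything else is a direct substitution or an appeal to the established fact that a nonempty algebraic interior forces $\left( C-C\right) ^{\perp}=\left\{ 0\right\} $.
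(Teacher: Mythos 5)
Your proposal is correct and follows essentially the same route as the paper: both directions rest on the reparametrization $Df(x;y)=\lim_{t\downarrow 0}\frac{f(x+t(y-x))-f(x)}{t}$, the fact that $x\in\operatorname*{cor}C$ makes every direction $v\in X$ reachable as $\varepsilon^{-1}(y-x)$ for some $y\in C$, and the observation that $\operatorname*{cor}C\neq\emptyset$ forces $\left(C-C\right)^{\perp}=\left\{0\right\}$ so the gradient class is a singleton. Your explicit care about transferring existence of the affine limit to existence of the Gateaux limit in the converse is a point the paper's own (terser) computation glosses over, but it is the same argument.
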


Here the affine gradient inherits the uniqueness of the Gateaux one.

\bigskip

\noindent\textbf{Proof} Let $f$ be Gateaux differentiable at $x\in
\operatorname*{cor}C$ and let $x^{\ast}=\nabla_{G}f\left(  x\right)  $. For
each $y\in C$,%
\[
\left\langle y-x,x^{\ast}\right\rangle =\lim_{t\rightarrow0^{+}}\frac{f\left(
x+t\left(  y-x\right)  \right)  -f\left(  x\right)  }{t}=Df\left(  x;y\right)
\]
Hence, $x^{\ast}\in\left[  \nabla_{a}f\left(  x\right)  \right]  $. As
$x\in\operatorname*{cor}C$, the set $\left[  \nabla_{a}f\left(  x\right)
\right]  $ is a singleton and so we can write $x^{\ast}=\nabla_{a}f\left(
x\right)  $. Conversely, let $f$ be a-differentiable at $x$, with $x^{\ast}%
\in\left[  \nabla_{a}f\left(  x\right)  \right]  $. Take $y\in X$. It follows
that $x+t_{0}y\in C$ for some $t_{0}>0$ small enough. It holds
\[
t_{0}\left\langle v,x^{\ast}\right\rangle =\left\langle t_{0}v,x^{\ast
}\right\rangle =Df\left(  x;x+t_{0}v\right)  =\lim_{t\rightarrow0^{+}}%
\frac{f\left(  \left(  1-t\right)  x+t\left(  x+t_{0}v\right)  \right)  }%
{t}=t_{0}\left\langle v,\nabla_{G}f\left(  x\right)  \right\rangle
\]
Hence, $x^{\ast}=\nabla_{G}f\left(  x\right)  $. This proves that $\left[
\nabla_{a}f\left(  x\right)  \right]  =\left\{  \nabla_{G}f\left(  x\right)
\right\}  $.\hfill$\blacksquare$

\bigskip

Observe that when $f$ has an extension $\tilde{f}:X\rightarrow\mathbb{R}$
which is Gateaux differentiable on $C$, then $f$ is a-differentiable on $C$
and $\nabla_{a}f\left(  x\right)  =\nabla_{G}\tilde{f}\left(  x\right)
+\left(  C-C\right)  ^{\perp}$ for all $x\in C$.

\subsection{Examples \label{sect:examples}}

We present a few examples to illustrate the concepts introduced so far.

\begin{example}
\label{ex:lop}\emph{Consider the dual pair} $\left\langle ca\left(  Y\right)
,C_{b}\left(  Y\right)  \right\rangle $\emph{, where} $Y$\emph{ be a metric
space. In view of (\ref{eq:FFT}), a functional} $T:\Delta\left(  Y\right)
\rightarrow\mathbb{R}$ \emph{is affinely differentiable at }$\mu\in
\Delta\left(  Y\right)  $\emph{ if there is an element }$u_{\mu}\in
C_{b}\left(  Y\right)  $ \emph{such that}%
\[
DT\left(  \mu;\lambda\right)  =\int_{X}u_{\mu}\mathrm{d}\left(  \lambda
-\mu\right)  \qquad\forall\mu\in\Delta\left(  Y\right)
\]
\emph{The gradient }$u_{\mu}\in C_{b}\left(  Y\right)  $\emph{ is unique up to
an additive constant. Indeed, if }$u\in\left(  \Delta\left(  Y\right)
-\Delta\left(  Y\right)  \right)  ^{\perp}$, \emph{then }$\left\langle
u,\lambda-\mu\right\rangle =0$\emph{ for all }$\lambda,\mu\in\Delta\left(
Y\right)  $\emph{. By setting }$\lambda=\delta_{x}$\emph{ and }$\mu=\delta
_{y}$\emph{, we then get }$u\left(  x\right)  =u\left(  y\right)  $ \emph{for
all }$x,y\in Y$\emph{. Hence, }$u$\emph{ is a constant function.}%
\newline\emph{A similar argument applies for the functionals }$T:\mathcal{D}%
\left[  a,b\right]  \rightarrow\mathbb{R}$ \emph{where} $\mathcal{D}\left[
a,b\right]  $ \emph{is the space of the probability distributions on the
interval} $\left[  a,b\right]  $. \emph{The space} $\mathcal{D}\left[
a,b\right]  $ \emph{is a convex subset of the normalized right-continuous
functions of bounded variations.} \emph{Also here} $DT\left(  F;G\right)
=\int_{a}^{b}u_{F}d\left(  G-F\right)  $, \emph{for two distributions}
$F,G\in\mathcal{D}\left[  a,b\right]  $ \emph{and}
\[
\left(  \mathcal{D-D}\right)  ^{\perp}=span\left\{  1_{\left[  a,b\right]
}\right\}
\]
\hfill$\blacktriangle$
\end{example}

\begin{example}
\label{integral} \emph{Let }$S$\emph{ be a metric space and }$A$\emph{ a
convex subset of} $\mathbb{R}^{n}$. \emph{Denote by }$C$\emph{ the convex set
of all norm-bounded (i.e., }$\sup_{s\in S}\left\Vert u(s)\right\Vert
_{n}<\infty$\emph{) and continuous maps} $u:S\rightarrow A$\emph{. Given }%
$\mu\in\Delta\left(  S\right)  $ \emph{and }$F:S\times A\rightarrow\mathbb{R}%
$, \emph{define }$I:C\rightarrow\mathbb{R}$ \emph{by}
\[
I(u)=\int_{S}F(s,u(s))\mathrm{d}\mu(s)
\]
\emph{Under the following conditions:}

\begin{enumerate}
\item[\emph{(i)}] $F(\cdot,x)$ \emph{is Borel measurable for every} $x\in A$;

\item[\emph{(ii)}] $F(s,\cdot)$ \emph{is wa-differentiable for every} $s\in
S$, \emph{with differential }$DF\left(  s,x;\cdot\right)  $;

\item[\emph{(iii)}] \emph{there exists} $\kappa:S\rightarrow\mathbb{\mathbb{R}%
}$\emph{, with} $\int\kappa\mathrm{d}\mu<\infty$\emph{, such that}%
\[
\left\vert F\left(  \cdot,x\right)  -F\left(  \cdot,y\right)  \right\vert
\leq\kappa\left(  \cdot\right)  \left\Vert x-y\right\Vert _{n}\qquad\forall
x,y\in A
\]

\end{enumerate}

\noindent$I$\emph{ is well defined and wa-differentiable at each }$u\in
C$\emph{, with}%
\[
DI(u;g)=\int_{S}DF(s,u(s);g\left(  s\right)  )d\mu(s)\qquad\forall g\in C
\]
\emph{ Indeed, by (iii) we have }
\[
\frac{1}{t}|F(s,(1-t)u(s)+tg(s))-F(s,u(s))|\text{ }\leq\kappa\left(  s\right)
\left\Vert g(s)-u(s)\right\Vert _{n}\qquad\forall s\in S
\]
\emph{The desired result thus follows from the Dominated Convergence
Theorem.}\hfill$\blacktriangle$
\end{example}

Inspired by an example in Phelps \cite{phelps}, next we present a
wa-differentiable function which is not a-differentiable.

\begin{example}
\label{ex:hilb}\emph{Let} $C=\left\{  \left\{  x_{n}\right\}  \in
\mathbb{R}^{\mathbb{N}}:\text{ }\left\vert x_{n}\right\vert \leq a_{n}\text{
}\forall n\right\}  $, \emph{where }$\left\{  a_{n}\right\}  $\emph{ is an
assigned scalar sequence such that }$0<a_{n}<1$\emph{ and }$\sum_{n}%
a_{n}<\infty$\emph{. Clearly the set }$C$\emph{ is a norm-bounded and convex
closed subset of} \emph{the Hilbert space} $\ell^{2}$\emph{. The convex
function} $f:C\rightarrow\mathbb{R}$ \emph{defined by}%
\[
f\left(  x\right)  =\left(  \sum_{n}x_{n}\right)  ^{2}%
\]
\emph{ is wa-differentiable at each point }$x\in C$\emph{, and}%
\[
Df\left(  x;y\right)  =2\sum_{n}x_{n}\cdot\sum_{n}y_{n}-2f\left(  x\right)
\qquad\forall y\in C
\]
\emph{Clearly, }$Df\left(  x;\cdot\right)  $ \emph{is affine on }$C$\emph{. But, }$f$\emph{ is not a-differentiable at any point }$x\in C$ \emph{since there is no element }$u=\left(  u_{n}\right)  $ $\in$ $\ell^{2}$
\emph{such that }$\left(  u,y\right)  =\sum_{n}y_{n}$ \emph{for all} $y\in
C.$\emph{ Indeed, for each integer }$m$ \emph{consider the point}
$y_{m}=(0,0,..,a_{m},....)\in C$.\emph{ It follows that }$u_{m}a_{m}=\left(
u,y_{m}\right)  =a_{m}$, \emph{namely, }$u_{m}=1$\emph{ for all }$n$\emph{.
But then} $u\notin\ell^{2}$\emph{.}

\emph{The map }$y\mapsto\sum_{n}y_{n}$\emph{ is continuous. Indeed, if the
sequence }$\left\{  y_{n}^{m}\right\}  $\emph{ converges to }$\left\{
y_{n}\right\}  $\emph{ as }$m\rightarrow\infty$\emph{, then it converges
pointwise, i.e., }$y_{n}^{m}\rightarrow y_{n}$\emph{ for every }$n$\emph{. By
the Dominated Convergence Theorem, }$\sum_{n}y_{n}^{m}\rightarrow\sum_{n}%
y_{n}$\emph{. Hence, }$Df\left(  x;\cdot\right)  $ \emph{is weakly
continuous.} \emph{Finally, observe that }$\operatorname*{cor}C$\emph{ is
empty, something not surprising in light of Proposition} \emph{\ref{prop:xet}%
}.\hfill$\blacktriangle$
\end{example}

A bifunction $B:C\times C\rightarrow\mathbb{R}$ is a \emph{biaffine form }when
it is affine in each argument. It is called \emph{extendable} when it can be
extended to a bilinear form on $X\times X$. An example is the biaffine form
$B:C\times C\rightarrow\mathbb{R}$ given by%

\[
B\left(  x,y\right)  =\left(  \sum_{n}x_{n}\right)  \cdot\left(  \sum_{n}%
y_{n}\right)
\]
where $C$ is the convex subset of $\ell^{2}$ described in Example
\ref{ex:hilb}. As already seen, this form is not extendable.

A biaffine form $B$ is \emph{symmetric }when $B\left(  x,y\right)  =B\left(
y,x\right)  $ for all $x,y\in C$. Its symmetrization is
\[
B_{S}\left(  x,y\right)  =\frac{1}{2}\left[  B\left(  x,y\right)  +B\left(
y,x\right)  \right]
\]
Associated with a biaffine form $B$ there is the (\emph{affine})
\emph{quadratic form} $Q:C\rightarrow\mathbb{R}$ given by%
\[
Q\left(  x\right)  =B(x,x)=B_{S}\left(  x,x\right)
\]
The wa-differentiability of biaffine forms as well as of quadratic forms are
clearly given by%
\begin{equation}
DB\left(  x_{1},x_{2};y_{1},y_{2}\right)  =B\left(  x_{1},y_{2}\right)
+B\left(  y_{1},x_{2}\right)  -2B\left(  x_{1},x_{2}\right)  \label{eq:bilin}%
\end{equation}
and%
\begin{equation}
DQ\left(  x;y\right)  =2B_{S}(x,y)-2Q\left(  x\right)  \label{eq:quad}%
\end{equation}
The wa-derivative $DQ\left(  x;\cdot\right)  $ is extendable when the biaffine
form $B$ is extendable and we get the gradient $\nabla_{a}Q\left(  x\right)
=2B_{S}(x,\cdot)$.

\begin{example}
\label{ex:alg}\emph{The }Mann-Whitney \emph{biaffine form }$B:\Delta\left(
\mathbb{R}\right)  \times\Delta\left(  \mathbb{R}\right)  \rightarrow
\mathbb{R}$ \emph{is}%
\[
B\left(  \mu,\lambda\right)  =\int F_{\mu}\left(  t\right)  \mathrm{d}%
F_{\lambda}(t)=\int_{\mathbb{R}}F_{\mu}\mathrm{d}\lambda\text{ }%
\]
\emph{Here }$F_{\mu}$\emph{ is the cumulative distribution function associated
with }$\mu\in\Delta\left(  \mathbb{R}\right)  $ \emph{and }$\int F_{\mu
}dF_{\lambda}$ $\emph{is}$ $\emph{the}$ $\emph{Lebesgue}$\emph{-}%
$\emph{Stieltjes}$\emph{ integral. This biaffine form} \emph{is used in
statistics (see \cite{gill}). By (\ref{eq:bilin}), }
\[
DB\left(  \mu,\lambda;\mu_{1},\lambda_{1}\right)  =\int\left(  F_{\mu_{1}%
}-F_{\mu}\right)  \mathrm{d}\lambda+\int F_{\mu}\mathrm{d}\left(  \lambda
_{1}-\lambda\right)  =\int F_{\mu_{1}}\mathrm{d}\lambda+\int F_{\mu}%
\mathrm{d}\lambda_{1}+\gamma
\]
\emph{where }$\gamma$ \emph{is a scalar independent of }$\mu_{1}$ \emph{and}
$\lambda_{1}$\emph{. This wa-differential is not always extendable.} \emph{It
is, however, extendable when }$F_{\mu}$ \emph{and} $F_{\lambda}$\emph{ are
continuous with bounded support, say contained in an interval }$\left[
a,b\right]  $\emph{. Indeed,}%
\[
\int F_{\mu_{1}}\mathrm{d}\lambda+\int F_{\mu}\mathrm{d}\lambda_{1}=\int
F_{\mu_{1}}\mathrm{d}F_{\lambda}+\int F_{\mu}\mathrm{d}F_{\lambda_{1}}%
=\int_{a}^{b}F_{\mu_{1}}\mathrm{d}F_{\lambda}+\int F_{\mu}\mathrm{d}%
F_{\lambda_{1}}=1-\int F_{\lambda}\mathrm{d}F_{\mu_{1}}+\int F_{\mu}%
\mathrm{d}F_{\lambda_{1}}%
\]
\emph{where, by the continuity of the distribution functions, the integral
}$\int_{a}^{b}F_{\mu_{1}}dF_{\lambda}$\emph{ is Riemann-Stieltjes}%
.\footnote{In the last equality we used integration by parts (see, e.g.,
Theorem 14.10 of \cite{caro}).} \emph{Therefore, }$B$\emph{ is
a-differentiable at }$\left(  \mu,\lambda\right)  $,\emph{ with gradient}
\[
\nabla_{a}B\left(  \mu,\lambda\right)  =\left(  -F_{\lambda},F_{\mu}\right)
\in C_{b}\left(  \mathbb{R}\right)  \times C_{b}\left(  \mathbb{R}\right)
\]
\hfill$\blacktriangle$
\end{example}

\subsection{Mean value theorem}

Let $x,y\in C$ and, for each $t\in\left[  0,1\right]  $, set $x_{t}=\left(
1-t\right)  x+ty.$ Each function $f:C\rightarrow\mathbb{R}$ has an auxiliary
scalar function $\varphi_{x,y}:\left[  0,1\right]  \rightarrow\mathbb{R}$
defined by $\varphi_{x,y}\left(  t\right)  =f\left(  x_{t}\right)  $.

\begin{definition}
\label{def:hem}A function $f:C\rightarrow\mathbb{R}$ is
\emph{hemidifferentiable} if, for all $x,y\in C$, its auxiliary function
$\varphi_{x,y}:\left[  0,1\right]  \rightarrow\mathbb{R}$ is differentiable on
$\left[  0,1\right]  $.\footnote{Differentiable on $\left[  0,1\right]  $
means right-differentiable at $t=0$ and left-differentiable at $t=1$.}
\end{definition}

We begin with a non-trivial lemma. To ease on notation, when no confusion may
arise we often omit subscripts and just write $\varphi$ in place of
$\varphi_{x,y}$.

\begin{lemma}
\label{lmm:ccc}A wa-differentiable\footnote{A function is \textquotedblleft
wa-differentiable\textquotedblright\ (\textquotedblleft
a-differentiable\textquotedblright) when wa-differentiable (a-differentiable)
at all points of its domain.} $f:C\rightarrow\mathbb{R}$ is
hemidifferentiable, with%
\begin{equation}
\varphi^{\prime}\left(  t\right)  =\frac{1}{1-t}Df\left(  x_{t};y\right)
=-\frac{1}{t}Df\left(  x_{t};x\right)  \qquad\forall t\in\left(  0,1\right)
\label{eq:ghl}%
\end{equation}
and%
\[
\varphi_{+}^{\prime}\left(  0\right)  =Df\left(  x;y\right)  \quad
\text{;}\quad\varphi_{-}^{\prime}\left(  1\right)  =-Df\left(  y;x\right)
\]

\end{lemma}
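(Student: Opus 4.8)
The plan is to fix $x,y\in C$, set $x_t=(1-t)x+ty$, and analyze the scalar function $\varphi=\varphi_{x,y}$ by computing its two one-sided derivatives at each $t$ separately. The guiding idea is to \emph{re-center} the incremental quotient of $\varphi$ at the base point $x_t$, so that the wa-differentiability of $f$ at the single point $x_t$---which is all that Definition \ref{def:dir} guarantees pointwise---can be applied; the two one-sided derivatives are then reconciled using the affineness of $Df(x_t;\cdot)$.

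First I would compute the right derivative at $t\in[0,1)$. A short computation expresses the forward increment as a convex combination based at $x_t$, namely $x_{t+h}=(1-s)x_t+sy$ with $s=h/(1-t)$, and $s\downarrow 0$ as $h\downarrow 0$; hence
\[
\varphi'_+(t)=\lim_{h\downarrow 0}\frac{f\big((1-s)x_t+sy\big)-f(x_t)}{h}=\frac{1}{1-t}\lim_{s\downarrow 0}\frac{f\big((1-s)x_t+sy\big)-f(x_t)}{s}=\frac{1}{1-t}Df(x_t;y).
\]
Symmetrically, writing the backward increment as $x_{t-h}=(1-r)x_t+rx$ with $r=h/t$ gives, for $t\in(0,1]$,
\[
\varphi'_-(t)=-\frac{1}{t}Df(x_t;x).
\]
The two boundary formulas are exactly the endpoint instances of these: at $t=0$ the first display reduces to $\varphi'_+(0)=Df(x;y)$, and at $t=1$ the second reduces to $\varphi'_-(1)=-Df(y;x)$ (since $x_1=y$).

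For $t\in(0,1)$ both one-sided derivatives exist, and differentiability requires that they coincide; this reconciliation is the crux of the argument and the only place where affineness---rather than mere existence of the directional derivatives---is indispensable. Since $Df(x_t;\cdot)$ is affine on $C$ and $Df(x_t;x_t)=0$ by property (i), evaluating the affine map at the convex combination $x_t=(1-t)x+ty$ yields
\[
0=Df(x_t;x_t)=(1-t)Df(x_t;x)+tDf(x_t;y),
\]
which rearranges to $\frac{1}{1-t}Df(x_t;y)=-\frac{1}{t}Df(x_t;x)$, that is, $\varphi'_+(t)=\varphi'_-(t)$. Thus $\varphi$ is differentiable on $[0,1]$ and both expressions in (\ref{eq:ghl}) hold. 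I expect the reparametrization algebra (solving for $s$ and $r$) to be routine; the genuine obstacle is precisely this last step, because for a general $f$ the left and right derivatives along the segment could differ, and it is the affineness of the wa-differential that forces them to agree and thereby upgrades one-sided to honest differentiability on each segment.
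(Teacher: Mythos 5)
Your proof is correct and follows essentially the same route as the paper's: the same re-centering of the incremental quotients at $x_{t}$ (your substitutions $s=h/(1-t)$ and $r=h/t$ are exactly the paper's reparametrizations), the same one-sided derivative formulas, and the same use of the affineness of $Df\left(  x_{t};\cdot\right)  $ together with $Df\left(  x_{t};x_{t}\right)  =0$ to force $\varphi_{+}^{\prime}\left(  t\right)  =\varphi_{-}^{\prime}\left(  t\right)  $ on $\left(  0,1\right)  $. You also correctly identify the affineness step as the only place where wa-differentiability, rather than mere existence of directional derivatives, is needed.
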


As $Df\left(  x_{t};\cdot\right)  $ is affine, (\ref{eq:ghl}) is equivalent
to
\begin{equation}
\varphi^{\prime}\left(  t\right)  =\frac{1}{\tau-t}Df\left(  x_{t};x_{\tau
}\right)  \label{eq:gen}%
\end{equation}
for all $\left(  t,\tau\right)  \in\left(  0,1\right)  \times\left[
0,1\right]  $ with $\tau\neq t$. For $\tau\in\left\{  0,1\right\}  $ we obtain
(\ref{eq:ghl}).

When $f$ is a-differentiable on $C$, from (\ref{eq:hju}) it follows
immediately that, for each $t\in\left(  0,1\right)  $,\footnote{Recall that
$\nabla_{a}f\left(  x_{t}\right)  $ is a representative of the equivalence
class $\left[  \nabla_{a}f\left(  x_{t}\right)  \right]  $, so (\ref{eq:typ})
means $\varphi^{\prime}\left(  t\right)  =\left\langle y-x,x^{\ast
}\right\rangle $ for any $x^{\ast}\in\left[  \nabla_{a}f\left(  x_{t}\right)
\right]  $.}%
\begin{equation}
\varphi^{\prime}\left(  t\right)  =\frac{1}{1-t}Df\left(  x_{t};y\right)
=\left\langle y-x,\nabla_{a}f\left(  x_{t}\right)  \right\rangle .
\label{eq:typ}%
\end{equation}

\noindent\textbf{Proof} For each $t\in\left[  0,1\right]  $ we have the
following obvious algebraic relations
\[
y-x_{t}=\left(  1-t\right)  \left(  y-x\right)  \quad\text{and\quad}%
x-x_{t}=t\left(  x-y\right)
\]
Now fix $x,y\in C$ and $t\in\left[  0,1\right)  $. The limit%
\[
\lim_{h\downarrow0}\frac{\varphi\left(  t+\left(  1-t\right)  h\right)
-\varphi\left(  t\right)  }{h}%
\]
exists. Indeed,%
\[
\frac{\varphi\left(  t+\left(  1-t\right)  h\right)  -\varphi\left(  t\right)
}{h}=\frac{f\left(  x_{t}+h\left(  1-t\right)  \left(  y-x\right)  \right)
-f\left(  x_{t}\right)  }{h}=\frac{f\left(  x_{t}+h\left(  y-x_{t}\right)
\right)  -f\left(  x_{t}\right)  }{h}%
\]
Hence,%
\[
\lim_{h\downarrow0}\frac{\varphi\left(  t+\left(  1-t\right)  h\right)
-\varphi\left(  t\right)  }{h}=Df\left(  x_{t};y\right)
\]
On the other hand,
\[
\lim_{h\downarrow0}\frac{\varphi\left(  t+\left(  1-t\right)  h\right)
-\varphi\left(  t\right)  }{h}=\left(  1-t\right)  \lim_{h\downarrow0}%
\frac{\varphi\left(  t+\left(  1-t\right)  h\right)  -\varphi\left(  t\right)
}{\left(  1-t\right)  h}=\left(  1-t\right)  \varphi_{+}^{\prime}\left(
t\right)
\]
Hence,%
\[
\varphi_{+}^{\prime}\left(  t\right)  =\frac{1}{1-t}Df\left(  x_{t};y\right)
\]

Use the same method for the left derivative $\varphi_{-}^{\prime}$.
Specifically, begin with the limit
\begin{align*}
\lim_{h\uparrow0}\frac{\varphi\left(  t+th\right)  -\varphi\left(  t\right)
}{h}  &  =-\lim_{k\downarrow0}\frac{\varphi\left(  t-tk\right)  -\varphi
\left(  t\right)  }{k}=-\lim_{k\downarrow0}\frac{f\left(  x_{t}+kt\left(
x-y\right)  \right)  -f\left(  x_{t}\right)  }{k}\\
&  =-\lim_{k\downarrow0}\frac{f\left(  x_{t}+k\left(  x-x_{t}\right)  \right)
-f\left(  x_{t}\right)  }{k}=-Df\left(  x_{t};x\right)
\end{align*}
At the same time, we have%
\[
\varphi_{-}^{\prime}\left(  t\right)  =\lim_{h\uparrow0}\frac{\varphi\left(
t+th\right)  -\varphi\left(  t\right)  }{th}=\frac{1}{t}\lim_{h\uparrow0}%
\frac{\varphi\left(  t+th\right)  -\varphi\left(  t\right)  }{h}=-\frac{1}%
{t}Df\left(  x_{t};x\right)
\]
Since $Df\left(  x_{t};\cdot\right)  $ is affine, we obtain
\[
0=Df\left(  x_{t};x_{t}\right)  =Df\left(  x_{t};\left(  1-t\right)
x+ty\right)  =\left(  1-t\right)  Df\left(  x_{t};x\right)  +tDf\left(
x_{t};y\right)
\]
which implies%
\[
\frac{1}{1-t}Df\left(  x_{t};y\right)  =-\frac{1}{t}Df\left(  x_{t};x\right)
=\varphi_{-}^{\prime}\left(  t\right)  =\varphi_{+}^{\prime}\left(  t\right)
\]
Hence, $\varphi_{+}^{\prime}=\varphi_{-}^{\prime}$ on $\left(  0,1\right)  $
and the proof is complete.\hfill$\blacksquare$

\bigskip

It is apparent from the proof of this lemma that when $f$ is not
wa-differentiable, but has one-sided directional derivatives, we can still
infer that
\[
\varphi_{+}^{\prime}\left(  t\right)  =\frac{1}{1-t}Df\left(  x_{t};y\right)
\text{\quad and\quad}\varphi_{-}^{\prime}\left(  t\right)  =-\frac{1}%
{t}Df\left(  x_{t};x\right)
\]

A remarkable consequence of the last lemma is the following mean value theorem.

\begin{theorem}
[Mean value theorem]\label{cor:mean} Let $f:C\rightarrow\mathbb{R}$ be
wa-differentiable. For each $x,y\in C$ there exists $t\in\left(  0,1\right)  $
such that%
\begin{equation}
f\left(  y\right)  -f\left(  x\right)  =\frac{1}{1-t}Df\left(  x_{t};y\right)
\label{eq:mv}%
\end{equation}

\end{theorem}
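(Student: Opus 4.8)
The plan is to reduce the statement to the classical one-dimensional mean value theorem applied to the auxiliary scalar function $\varphi=\varphi_{x,y}$ introduced just before Lemma \ref{lmm:ccc}. The point is that Lemma \ref{lmm:ccc} has already done all the analytic work: it establishes that $\varphi$ is genuinely differentiable on the \emph{closed} interval $[0,1]$ and supplies the explicit formula for its derivative. Hence the remaining task is essentially bookkeeping.

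Concretely, I would fix $x,y\in C$ and set $\varphi(t)=f(x_t)$ with $x_t=(1-t)x+ty$. By Lemma \ref{lmm:ccc}, $f$ is hemidifferentiable, so $\varphi$ is differentiable on $[0,1]$; in particular it is continuous on $[0,1]$ and differentiable on the open interval $(0,1)$. These are exactly the hypotheses of Lagrange's mean value theorem, which yields some $t\in(0,1)$ with
\[
\varphi(1)-\varphi(0)=\varphi'(t).
\]
Then I would translate the endpoints, observing that $\varphi(1)=f(x_1)=f(y)$ and $\varphi(0)=f(x_0)=f(x)$, and substitute the derivative formula $\varphi'(t)=\frac{1}{1-t}Df(x_t;y)$ from (\ref{eq:ghl}). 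This gives precisely (\ref{eq:mv}).

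The only point requiring a little care is ensuring that the classical mean value theorem really applies, i.e., that $\varphi$ is continuous up to the endpoints and not merely differentiable on the open interval. This is guaranteed by the definition of hemidifferentiability (Definition \ref{def:hem}), which demands right-differentiability at $t=0$ and left-differentiability at $t=1$, both of which imply the requisite one-sided continuity. I do not expect any genuine obstacle here: once Lemma \ref{lmm:ccc} is available, the theorem is essentially a corollary, and no further structure of $C$, $X$, or $f$ is needed beyond wa-differentiability.
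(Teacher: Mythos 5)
Your proposal is correct and follows essentially the same route as the paper: both invoke Lemma \ref{lmm:ccc} to get differentiability (hence continuity) of the auxiliary function $\varphi$ on $[0,1]$, apply the classical one-dimensional mean value theorem, and substitute the derivative formula (\ref{eq:ghl}). Your extra remark about one-sided differentiability at the endpoints guaranteeing continuity there is a fair point of care that the paper leaves implicit.
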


When $f$ is a-differentiable, by (\ref{eq:typ}) we get%
\begin{equation}
f\left(  y\right)  -f\left(  x\right)  =\left\langle y-x,\nabla_{a}f\left(
x_{t}\right)  \right\rangle \label{eq:AAG}%
\end{equation}

\noindent\textbf{Proof} Let $x,y\in C$. The auxiliary function $\varphi\left(
t\right)  =f\left(  x_{t}\right)  $ is continuous on $\left[  0,1\right]  $
and differentiable on $\left(  0,1\right)  $. By the basic mean value theorem,
there exists $t\in\left(  0,1\right)  $ such that%
\[
\varphi^{\prime}\left(  t\right)  =\frac{\varphi\left(  1\right)
-\varphi\left(  0\right)  }{1-0}=\varphi\left(  1\right)  -\varphi\left(
0\right)
\]
By Lemma \ref{lmm:ccc},%
\[
\frac{1}{1-t}Df\left(  x_{t};y\right)  =f\left(  y\right)  -f\left(  x\right)
\]
as desired.\hfill$\blacksquare$

\subsection{Affine calculus}

To develop an effective affine calculus, introducing a slightly stronger
notion of differentiability can be useful.

\begin{definition}
A wa-differentiable function $f:C\rightarrow\mathbb{R}$ is (\emph{radially}%
)\emph{ continuously} \emph{wa-differentiable} if, for each $x,y\in C$, the
functions
\begin{equation}
t\mapsto Df\left(  x_{t};y\right)  \text{\quad and\quad}t\mapsto Df\left(
x_{t};x\right)  \label{eq:lolo}%
\end{equation}
are both continuous on $\left[  0,1\right]  $.
\end{definition}

For instance, it is easy to check that all biaffine forms as well as all
quadratic functionals are radially continuously wa-differentiable.

By (\ref{eq:ghl}), we have%
\[
Df\left(  x_{t};x\right)  =-\frac{t}{1-t}Df\left(  x_{t};y\right)
\]
As a consequence, in (\ref{eq:lolo}) the continuity of $t\mapsto Df\left(
x_{t};y\right)  $ implies that of $t\mapsto Df\left(  x_{t};x\right)  $ at all
$t\in\left[  0,1\right)  $. Only at $t=1$, the continuity of $t\mapsto
Df\left(  x_{t};x\right)  $ is a genuine assumption.

\begin{proposition}
\label{prop:funda}If $f:C\rightarrow\mathbb{R}$ is continuously
wa-differentiable, then%
\[
f\left(  y\right)  -f\left(  x\right)  =\int_{0}^{1}\frac{1}{1-t}Df\left(
x_{t};y\right)  \mathrm{d}t
\]
for all $x,y\in C$.
\end{proposition}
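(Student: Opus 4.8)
The plan is to recognize this as a Fundamental Theorem of Calculus statement applied to the auxiliary function $\varphi_{x,y}$, so everything reduces to justifying that $\varphi$ is sufficiently regular to write $\varphi(1)-\varphi(0)=\int_0^1\varphi'(t)\,\mathrm{d}t$ and then substituting the formula for $\varphi'$ from Lemma \ref{lmm:ccc}. Fix $x,y\in C$ and recall $\varphi(t)=f(x_t)$. By Lemma \ref{lmm:ccc}, $f$ is hemidifferentiable, so $\varphi$ is differentiable on $(0,1)$ with $\varphi'(t)=\frac{1}{1-t}Df(x_t;y)$, and it is one-sidedly differentiable at the endpoints. Thus the integrand in the claimed identity is exactly $\varphi'(t)$, and the target equality $f(y)-f(x)=\varphi(1)-\varphi(0)=\int_0^1\varphi'(t)\,\mathrm{d}t$ is precisely the FTC for $\varphi$.

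The first real step is to establish that $\varphi'$ is continuous on $[0,1]$, which is where the continuous wa-differentiability hypothesis enters. By definition the maps $t\mapsto Df(x_t;y)$ and $t\mapsto Df(x_t;x)$ are continuous on $[0,1]$. On $(0,1)$ we have $\varphi'(t)=\frac{1}{1-t}Df(x_t;y)$, which is continuous there since it is a product of continuous functions (the factor $1/(1-t)$ being continuous away from $t=1$). At the right endpoint I would instead use the second expression $\varphi'(t)=-\frac{1}{t}Df(x_t;x)$ from (\ref{eq:ghl}): since $t\mapsto Df(x_t;x)$ is continuous on all of $[0,1]$ by hypothesis and $-1/t$ is continuous near $t=1$, the left-hand derivative extends continuously to $t=1$, with $\varphi_-'(1)=-Df(y;x)$ matching the endpoint value. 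The analogous argument at $t=0$ uses $\varphi'(t)=\frac{1}{1-t}Df(x_t;y)$ directly, giving $\varphi_+'(0)=Df(x;y)$. Having both one-sided endpoint limits agree with the interior formula, $\varphi'$ is continuous on the closed interval $[0,1]$.

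Once $\varphi'$ is known to be continuous on $[0,1]$, the conclusion is immediate: a continuously differentiable function on a compact interval satisfies the Fundamental Theorem of Calculus, so
\[
f(y)-f(x)=\varphi(1)-\varphi(0)=\int_0^1\varphi'(t)\,\mathrm{d}t=\int_0^1\frac{1}{1-t}Df(x_t;y)\,\mathrm{d}t.
\]

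I expect the only genuine subtlety to be the behavior at the endpoints, and in particular the right endpoint $t=1$, where the factor $1/(1-t)$ blows up. The whole point of requiring continuity of $t\mapsto Df(x_t;x)$ \emph{separately} at $t=1$ — emphasized in the paragraph preceding the proposition, where the authors note this is "a genuine assumption" — is precisely to tame this singularity: $Df(x_t;y)$ vanishes at a compensating rate as $t\to 1$ so that $\frac{1}{1-t}Df(x_t;y)=-\frac{1}{t}Df(x_t;x)$ stays bounded and continuous. Verifying that the integral is well defined (the integrand being continuous on the compact interval, hence bounded and Riemann integrable) is the main thing to check carefully; the rest is a routine invocation of Lemma \ref{lmm:ccc} and the classical FTC.
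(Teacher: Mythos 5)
Your proposal is correct and follows essentially the same route as the paper: reduce to the auxiliary function $\varphi$, invoke Lemma \ref{lmm:ccc} for the formula $\varphi'(t)=\frac{1}{1-t}Df(x_t;y)$, use the two continuity hypotheses (the second one precisely to tame the $1/(1-t)$ singularity at $t=1$ via the identity $\frac{1}{1-t}Df(x_t;y)=-\frac{1}{t}Df(x_t;x)$), and conclude by the Fundamental Theorem of Calculus. The only cosmetic difference is that the paper settles for continuity of $\varphi'$ on $(0,1)$ plus boundedness on $[0,1]$, whereas you establish continuity on the closed interval; both suffice for the FTC step.
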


When $f$ is a-differentiable, we get%
\[
f\left(  y\right)  -f\left(  x\right)  =\int_{0}^{1}\left\langle
y-x,\nabla_{a}f\left(  x_{t}\right)  \right\rangle \mathrm{d}t
\]

\noindent\textbf{Proof} Consider the auxiliary function $\varphi\left(
t\right)  =f\left(  x_{t}\right)  $. By Lemma \ref{lmm:ccc}, $\varphi^{\prime
}\left(  t\right)  $ exists for all $t\in\left(  0,1\right)  $. Moreover,
$\varphi^{\prime}$ is continuous on $\left(  0,1\right)  $ by the relation
$\varphi^{\prime}\left(  t\right)  =\left(  1-t\right)  ^{-1}Df\left(
x_{t};y\right)  $ and by the continuity of the first map in (\ref{eq:lolo}).
On the other hand, by the continuity of first map in (\ref{eq:lolo})
\[
\lim_{t\rightarrow0^{+}}\varphi^{\prime}\left(  t\right)  =\lim_{t\rightarrow
0^{+}}\left(  1-t\right)  ^{-1}Df\left(  x_{t};y\right)  =Df\left(
x;y\right)  =\varphi_{+}^{\prime}\left(  0\right)
\]
Analogously, $\lim_{t\rightarrow1-}\varphi^{\prime}\left(  t\right)  =$
$\varphi_{-}^{\prime}\left(  1\right)  $. As a result, $\varphi^{\prime}$ is
continuous on $\left(  0,1\right)  $ and bounded on $\left[  0,1\right]  $.
Hence,%
\[
f\left(  y\right)  -f\left(  x\right)  =\varphi\left(  1\right)
-\varphi\left(  0\right)  =\int_{0}^{1}\varphi^{\prime}\left(  t\right)
\mathrm{d}t=\int_{0}^{1}\frac{1}{1-t}Df\left(  x_{t};y\right)  \mathrm{d}t
\]
as desired.\hfill$\blacksquare$

\begin{example}
\emph{According to} \emph{Example \ref{ex:lop}, when }$T:\Delta\left(
Y\right)  \rightarrow\mathbb{R}$ \emph{is} \emph{a-differentiable,} \emph{by
the mean value theorem (Theorem \ref{cor:mean}) there is a scalar }%
$t\in\left(  0,1\right)  $\emph{ such that}%
\[
T\left(  \lambda\right)  -T\left(  \mu\right)  =\frac{1}{1-t}\int_{Y}%
u_{\mu_{t}}\mathrm{d}\left(  \lambda-\mu\right)
\]
\emph{In addition,} \emph{Proposition \ref{prop:funda} claims that it also
holds}%
\[
T\left(  \lambda\right)  -T\left(  \mu\right)  =\int_{0}^{1}\left(  \int
_{Y}u_{\mu_{t}}\mathrm{d}\left(  \lambda-\mu\right)  \right)  \mathrm{d}t
\]
\emph{when }$T$ \emph{is radially continuously a-differentiable.}%
\hfill$\blacktriangle$
\end{example}

To characterize convexity for wa-differentiable functions we need the
following monotonicity notion. Notice that some of the following properties
remain valid even for functions which have affine directional
derivatives\footnote{Recall that $f$ has affine directional derivative at $x$,
if $Df\left(  x;y\right)  $ exists  and is finite for every $y\in C$ (see Definition
\ref{def:dir}).} and not necessarily wa-differentiable.

\begin{definition}
\label{def:mono}Let $f:C\rightarrow\mathbb{R}$ have an affine directional
derivative at every $x\in C$. The function $f$ is said to have a
\emph{monotone} \emph{affine} \emph{directional derivative }if%
\begin{equation}
Df\left(  x;y\right)  +Df\left(  y;x\right)  \leq0\text{ } \label{eq:mon}%
\end{equation}
holds for all $x,y\in C$.
\end{definition}

This property has a sharper form for a-differentiable functions because
equation (\ref{eq:mon}) becomes the more familiar monotonicity condition
\[
\left\langle y-x,\nabla_{a}f\left(  y\right)  -\nabla_{a}f\left(  x\right)
\right\rangle \geq0
\]
With this, we can characterize convexity through the affine directional derivative.

\begin{proposition}
\label{prop:AAA}Consider the following properties:

\begin{enumerate}
\item[(i)] $f$ is convex;

\item[(ii)] for all $x,y\in C$,%
\begin{equation}
f\left(  y\right)  \geq f\left(  x\right)  +Df\left(  x;y\right)
\label{eq:cob}%
\end{equation}

\item[(iii)] the directional affine derivative is monotone.\medskip

It holds $\left(  i\right)  \Longrightarrow\left(  ii\right)  \Longrightarrow
(iii)$ and the three properties are equivalent when $f$ is wa-differentiable.
\end{enumerate}
\end{proposition}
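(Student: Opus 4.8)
The plan is to establish the chain $(i)\Rightarrow(ii)\Rightarrow(iii)$ in full generality (these implications should not require wa-differentiability, only existence of the affine directional derivatives), and then close the loop with $(iii)\Rightarrow(i)$ under the wa-differentiability hypothesis, using the mean value theorem (Theorem \ref{cor:mean}) as the main tool.

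For $(i)\Rightarrow(ii)$, I would exploit convexity directly at the level of the difference quotient. Fix $x,y\in C$ and set $x_t=(1-t)x+ty$. Convexity gives $f(x_t)\le(1-t)f(x)+tf(y)$, hence
\[
\frac{f(x_t)-f(x)}{t}\le f(y)-f(x)\qquad\forall t\in(0,1].
\]
Letting $t\downarrow 0$, the left-hand side tends to $Df(x;y)$, which yields $Df(x;y)\le f(y)-f(x)$, i.e. \eqref{eq:cob}. This argument uses only the existence of the directional derivative, not its affinity.

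For $(ii)\Rightarrow(iii)$, I would simply apply the subgradient-type inequality \eqref{eq:cob} twice, interchanging the roles of $x$ and $y$:
\[
f(y)\ge f(x)+Df(x;y)\qquad\text{and}\qquad f(x)\ge f(y)+Df(y;x).
\]
Adding these gives $f(x)+f(y)\ge f(x)+f(y)+Df(x;y)+Df(y;x)$, so $Df(x;y)+Df(y;x)\le 0$, which is exactly the monotonicity condition \eqref{eq:mon}. Again no wa-differentiability is needed here.

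The substantive part, and the step I expect to be the main obstacle, is $(iii)\Rightarrow(i)$, where wa-differentiability enters. The plan is to fix $x,y\in C$, set $x_t=(1-t)x+ty$, and show $f(x_t)\le(1-t)f(x)+tf(y)$ for each $t\in(0,1)$. I would work with the auxiliary function $\varphi(t)=f(x_t)$, which is differentiable on $(0,1)$ by Lemma \ref{lmm:ccc}, with $\varphi'(t)=\tfrac{1}{1-t}Df(x_t;y)=-\tfrac{1}{t}Df(x_t;x)$. The goal becomes showing $\varphi$ is convex on $[0,1]$, for which it suffices to show $\varphi'$ is nondecreasing. To establish this I would take $s<t$ in $(0,1)$ and relate $\varphi'(s)$ and $\varphi'(t)$ through the monotonicity hypothesis applied at the two points $x_s$ and $x_t$. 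The key algebraic fact is that $x_s$ and $x_t$ both lie on the segment from $x$ to $y$, so $x_t-x_s$ is a positive multiple of $y-x$; using the affinity of $Df(x_s;\cdot)$ and $Df(x_t;\cdot)$ together with the homogeneity relation \eqref{eq:sat}, one can express $Df(x_s;x_t)$ and $Df(x_t;x_s)$ in terms of $Df(x_s;y)$ and $Df(x_t;y)$ respectively, and then $Df(x_s;x_t)+Df(x_t;x_s)\le 0$ should translate into $\varphi'(s)\le\varphi'(t)$ after tracking the (positive) proportionality constants. The delicate point is getting these constants right so that the sign of the inequality is preserved; this is where I would be most careful, since a misplaced factor of $t$ or $1-t$ would flip the conclusion.
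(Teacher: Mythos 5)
Your proposal is correct and follows essentially the same route as the paper: the implications $(i)\Rightarrow(ii)\Rightarrow(iii)$ are the standard convexity/subgradient arguments (which the paper omits as routine), and for $(iii)\Rightarrow(i)$ the paper likewise applies the monotonicity condition to the pair $x_t,x_\tau$ and uses the identity $\varphi'(t)=\frac{1}{\tau-t}Df(x_t;x_\tau)$ from (\ref{eq:gen}) to conclude that $\varphi'$ is nondecreasing, hence $\varphi$ convex. The proportionality constants you were worried about work out exactly as in (\ref{eq:gen}), giving $0\geq Df(x_t;x_\tau)+Df(x_\tau;x_t)=(\tau-t)\left(\varphi'(t)-\varphi'(\tau)\right)$; the only small point to add is that convexity of $\varphi$ on the open interval extends to $[0,1]$ by the continuity of $\varphi$ at the endpoints, which Lemma \ref{lmm:ccc} supplies.
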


The convexity of a quadratic functional $Q:C\rightarrow\mathbb{R}$ is a simple
illustration of this result. In view of (\ref{eq:mon}), we obtain the
condition
\[
Q\left(  x\right)  +Q\left(  y\right)  -2B_{S}\left(  x,y\right)
\geq0\text{\qquad}\forall x,y\in C
\]
When the biaffine form is extendable, this inequality becomes the familiar
condition $Q\left(  x-y\right)  \geq0$ of semidefinite positivity for
quadratic functionals.

\bigskip

\noindent\textbf{Proof} We omit the proofs of (i)-(ii)-(iii), as they are
quite standard. . Let us check that (iii) implies (i) for wa-differentiable
functions. Consider the auxiliary function $\varphi\left(  t\right)  =f\left(
x_{t}\right)  $ on $\left[  0,1\right]  $ for two arbitrary points $x,y\in C$.
Let
\[
x_{t}=\left(  1-t\right)  x+ty\quad\text{and\quad}x_{\tau}=\left(
1-\tau\right)  x+\tau y
\]
with $t,\tau\in\left(  0,1\right)  $. In view of (\ref{eq:gen})%
\begin{align*}
0  &  \geq Df\left(  x_{t};x_{\tau}\right)  +Df\left(  x_{\tau};x_{t}\right)
=\left(  \tau-t\right)  \varphi^{\prime}\left(  t\right)  +\left(
t-\tau\right)  \varphi^{\prime}\left(  \tau\right) \\
&  =\left(  \tau-t\right)  \left(  \varphi^{\prime}\left(  t\right)
-\varphi^{\prime}\left(  \tau\right)  \right)
\end{align*}
The first derivative $\varphi^{\prime}$ is thus nondecreasing, so $\varphi$ is
convex on $\left(  0,1\right)  .$ By continuity of $\varphi$ in $\left[
0,1\right]  $, $\varphi$ is convex on the closed interval as well. Since this
is true for any pair of points $x,y\in C$, we conclude that $f$ is
convex.\hfill$\blacksquare$

\bigskip

Thanks to the previous results we can also relate gradients and
subdifferentials for convex wa-differentiable functionals. The
\emph{subdifferential} of a convex function $f:C\rightarrow\mathbb{R}$ at
$x\in C$ is the set
\[
\partial f(x)=\{x^{\ast}\in X^{\ast}:\forall y\in C,\text{ }f(y)\geq f\left(
x\right)  +\left\langle y-x,x^{\ast}\right\rangle \}
\]
while the (\emph{negative conical})\emph{ polar }$A^{-}\subseteq X^{\ast}$ of
a set $A$ in $X$ is given by%
\[
A^{-}=\left\{  x^{\ast}\in X^{\ast}:\forall x\in A,\text{ }\left\langle
x,x^{\ast}\right\rangle \leq0\right\}
\]
For a convex set $C$, the \emph{normal cone }at a point\emph{ }$x\in C$ is
defined as $N_{C}\left(  x\right)  =\left(  C-x\right)  ^{-}$.

\begin{proposition}
\label{prop:huhu}Let $f:C\rightarrow\mathbb{R}$ be a-differentiable and
convex. For each $x\in C$,%
\begin{equation}
\partial f\left(  x\right)  =x^{\ast}+N_{C}\left(  x\right)  \qquad\forall
x^{\ast}\in\left[  \nabla_{a}f\left(  x\right)  \right]  \label{eq:lola}%
\end{equation}
Moreover, if $x\in\operatorname*{ri}C$,
\[
\partial f\left(  x\right)  =\left[  \nabla_{a}f\left(  x\right)  \right]
\]

\end{proposition}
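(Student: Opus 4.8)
The plan is to establish the first identity $\partial f(x)=x^{\ast}+N_{C}(x)$ by a double-inclusion argument working straight from the definitions of the subdifferential and of the normal cone $N_{C}(x)=(C-x)^{-}$, and then to deduce the second identity by showing that relative interiority forces the polar cone $(C-x)^{-}$ to coincide with the annihilator $(C-x)^{\perp}$, which by (\ref{eq:kuku}) is precisely what identifies $x^{\ast}+N_{C}(x)$ with the equivalence class $[\nabla_{a}f(x)]$. Fix throughout a representative $x^{\ast}\in[\nabla_{a}f(x)]$, so that $Df(x;y)=\langle y-x,x^{\ast}\rangle$ for every $y\in C$ by (\ref{eq:hju}); note that a-differentiability entails wa-differentiability, so all the results of the previous subsections apply.

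For the inclusion $x^{\ast}+N_{C}(x)\subseteq\partial f(x)$, I would take any $n^{\ast}\in N_{C}(x)$. Since $f$ is convex and wa-differentiable, Proposition \ref{prop:AAA} gives $f(y)\geq f(x)+Df(x;y)=f(x)+\langle y-x,x^{\ast}\rangle$ for all $y\in C$. Adding the inequality $\langle y-x,n^{\ast}\rangle\leq0$, which holds because $n^{\ast}\in(C-x)^{-}$, yields $f(y)\geq f(x)+\langle y-x,x^{\ast}+n^{\ast}\rangle$, so $x^{\ast}+n^{\ast}\in\partial f(x)$. For the reverse inclusion, I would take $z^{\ast}\in\partial f(x)$ and test the subgradient inequality along the segment $x_{t}=(1-t)x+ty$: since $x_{t}-x=t(y-x)$, it reads $f(x_{t})-f(x)\geq t\langle y-x,z^{\ast}\rangle$, so dividing by $t>0$ and letting $t\downarrow0$ gives $Df(x;y)\geq\langle y-x,z^{\ast}\rangle$. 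Recalling $Df(x;y)=\langle y-x,x^{\ast}\rangle$, this rearranges to $\langle y-x,z^{\ast}-x^{\ast}\rangle\leq0$ for all $y\in C$, i.e. $z^{\ast}-x^{\ast}\in(C-x)^{-}=N_{C}(x)$. This completes the proof of $\partial f(x)=x^{\ast}+N_{C}(x)$.

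The main obstacle — and the only place where the hypothesis $x\in\operatorname*{ri}C$ enters — is upgrading $N_{C}(x)=(C-x)^{-}$ to the annihilator $(C-x)^{\perp}$. The inclusion $(C-x)^{\perp}\subseteq(C-x)^{-}$ is immediate. For the converse, I would fix $z^{\ast}\in(C-x)^{-}$ and an arbitrary $c\in C$; the direction $x-c$ is parallel to $\operatorname*{aff}C$, so since $x$ is internal there is $\varepsilon>0$ with $c^{\prime}:=x+\varepsilon(x-c)\in C$. Then $c^{\prime}-x=-\varepsilon(c-x)$, and applying $z^{\ast}\in(C-x)^{-}$ at $c^{\prime}$ gives $0\geq\langle c^{\prime}-x,z^{\ast}\rangle=-\varepsilon\langle c-x,z^{\ast}\rangle$, hence $\langle c-x,z^{\ast}\rangle\geq0$; together with $\langle c-x,z^{\ast}\rangle\leq0$ this forces $\langle c-x,z^{\ast}\rangle=0$. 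As $c\in C$ was arbitrary, $z^{\ast}\in(C-x)^{\perp}$, so $N_{C}(x)=(C-x)^{\perp}$. Combining this with the first identity and with (\ref{eq:kuku}) gives $\partial f(x)=x^{\ast}+(C-x)^{\perp}=[\nabla_{a}f(x)]$, as desired.
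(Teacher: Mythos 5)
Your proof is correct and follows essentially the same route as the paper's: the same double inclusion via the convexity inequality (\ref{eq:cob}) for one direction and the difference-quotient limit for the other, followed by the identification $(C-x)^{-}=(C-x)^{\perp}$ at relative interior points. The only difference is that you supply an explicit (and correct) argument for that last cone identity, which the paper merely asserts.
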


At non-internal points of $C$, the subdifferential of a convex function can
thus be strictly larger than its a-differential because the vector space
$\left(  C-x\right)  ^{\perp}$ can be strictly included in the normal cone
$N_{C}\left(  x\right)  $.

\bigskip

\noindent\textbf{Proof} Let $x^{\ast}\in\left[  \nabla_{a}f\left(  x\right)
\right]  $. By (\ref{eq:cob}),
\[
f\left(  y\right)  \geq f\left(  x\right)  +Df\left(  x;y\right)  =f\left(
x\right)  +\left\langle y-x,x^{\ast}\right\rangle \qquad\forall y\in C
\]
For an arbitrary element $p^{\ast}$ of $\left(  C-x\right)  ^{-}$, we have
$\left\langle y-x,p^{\ast}\right\rangle \leq0$. Consequently,%
\[
f\left(  y\right)  \geq f\left(  x\right)  +\left\langle y-x,x^{\ast
}\right\rangle \geq f\left(  x\right)  +\left\langle y-x,x^{\ast}+p^{\ast
}\right\rangle
\]
Hence, $x^{\ast}+p^{\ast}$ is a subdifferential. We thus proved the inclusion
$x^{\ast}+\left(  C-x\right)  ^{-}\subseteq\partial f\left(  x\right)  $.

Conversely, let $p^{\ast}\in\partial f\left(  x\right)  $. Then, $f\left(
y\right)  -f\left(  x\right)  \geq\left\langle y-x,p^{\ast}\right\rangle $ and
so%
\[
f\left(  x_{t}\right)  -f\left(  x\right)  \geq t\left\langle y-x,p^{\ast
}\right\rangle
\]
for all $y\in C$ and $t\in\left[  0,1\right]  $. Dividing by $t$ and letting
$t\rightarrow0$, we get $Df\left(  x;y\right)  \geq\left\langle y-x,p^{\ast
}\right\rangle $. If $x^{\ast}$ is a gradient,
\[
\left\langle y-x,x^{\ast}\right\rangle =Df\left(  x;y\right)  \geq\left\langle
y-x,p^{\ast}\right\rangle \Longrightarrow0\geq\left\langle y-x,p^{\ast
}-x^{\ast}\right\rangle
\]
That is, $p^{\ast}\in x^{\ast}+\left(  C-x\right)  ^{-}$. This proves the
converse inclusion $\partial f\left(  x\right)  \subseteq x^{\ast}+\left(
C-x\right)  ^{-}$.

In sum, $\partial f\left(  x\right)  =x^{\ast}+N_{C}\left(  x\right)  $. The
equality $\partial f\left(  x\right)  =\left[  \nabla_{a}f\left(  x\right)
\right]  $ is a consequence of the fact that $\left(  C-x\right)  ^{-}=\left(
C-x\right)  ^{\perp}$ when $x\in\operatorname*{ri}C$.\hfill$\blacksquare$

\bigskip

This proposition implies that subdifferentials are not empty for
a-differentiable convex functions. Thus, a-differentiable convex functions are
weakly lower semicontinuous.

We also report the following sum rule for subdifferentials.

\begin{proposition}
If two convex functions $f,g:C\rightarrow\mathbb{R}$ are a-differentiable at
$x\in C$, then%
\[
\partial\left(  f+g\right)  \left(  x\right)  =\partial f\left(  x\right)
+\partial g\left(  x\right)
\]

\end{proposition}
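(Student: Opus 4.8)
The plan is to reduce the entire statement to Proposition \ref{prop:huhu}. First I would record the inclusion $\partial f(x)+\partial g(x)\subseteq\partial(f+g)(x)$, which holds in general and without a-differentiability: if $p^{\ast}\in\partial f(x)$ and $q^{\ast}\in\partial g(x)$, then adding the two subgradient inequalities at $x$ yields $(f+g)(y)\geq(f+g)(x)+\langle y-x,p^{\ast}+q^{\ast}\rangle$ for all $y\in C$, so $p^{\ast}+q^{\ast}\in\partial(f+g)(x)$. The substance of the statement is the reverse inclusion, which ordinarily requires a constraint qualification; here it will follow automatically from the structural description of subdifferentials provided by Proposition \ref{prop:huhu}.

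The second step is to observe that $f+g$ is itself convex and a-differentiable at $x$. Convexity of the sum is immediate. For a-differentiability, since $f$ and $g$ have (finite) affine directional derivatives, so does their sum, with $D(f+g)(x;\cdot)=Df(x;\cdot)+Dg(x;\cdot)$; as each summand is an extendable affine functional of the form \eqref{eq:hju}, the sum is extendable as well, and a representative gradient is $\nabla_{a}(f+g)(x)=\nabla_{a}f(x)+\nabla_{a}g(x)$. Thus Proposition \ref{prop:huhu} applies to all three functions: fixing representatives $x^{\ast}\in[\nabla_{a}f(x)]$ and $y^{\ast}\in[\nabla_{a}g(x)]$, we have $\partial f(x)=x^{\ast}+N_{C}(x)$, $\partial g(x)=y^{\ast}+N_{C}(x)$, and $\partial(f+g)(x)=(x^{\ast}+y^{\ast})+N_{C}(x)$.

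The final step is the only genuine computation, and it is the cone identity $N_{C}(x)+N_{C}(x)=N_{C}(x)$. Since $N_{C}(x)=(C-x)^{-}$ is a negative polar, it is a convex cone: it is closed under addition and contains the zero functional, so $N_{C}(x)+N_{C}(x)\subseteq N_{C}(x)$ and, adding $0$, also $N_{C}(x)\subseteq N_{C}(x)+N_{C}(x)$. Combining the three descriptions,
\[
\partial f(x)+\partial g(x)=(x^{\ast}+y^{\ast})+N_{C}(x)+N_{C}(x)=(x^{\ast}+y^{\ast})+N_{C}(x)=\partial(f+g)(x),
\]
which is the desired equality (and which, incidentally, re-proves the easy inclusion). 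I do not anticipate a real obstacle: the delicate reverse inclusion, which usually fails without qualification, is here absorbed into the common normal-cone summand $N_{C}(x)$ supplied by Proposition \ref{prop:huhu}, so the only point requiring care is the verification that $N_{C}(x)$ is a convex cone and hence idempotent under addition.
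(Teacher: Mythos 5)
Your proposal is correct and follows essentially the same route as the paper: the paper's own proof also applies Proposition \ref{prop:huhu} to $f$, $g$, and $f+g$ (with representative gradients $x^{\ast}$, $y^{\ast}$, and $x^{\ast}+y^{\ast}$) and then invokes the idempotency relation $\left(  C-x\right)  ^{-}+\left(  C-x\right)  ^{-}=\left(  C-x\right)  ^{-}$. The only differences are presentational: you spell out the (ultimately redundant) easy inclusion and the verification that $f+g$ is a-differentiable with gradient $x^{\ast}+y^{\ast}$, which the paper leaves implicit.
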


\noindent\textbf{Proof} It follows from the relation $\left(  C-x\right)
^{-}+\left(  C-x\right)  ^{-}=\left(  C-x\right)  ^{-}$. Hence, if $x^{\ast
}\in\left[  \nabla_{a}f\left(  x\right)  \right]  $ and $y^{\ast}\in\left[
\nabla_{a}g\left(  x\right)  \right]  $, we have%
\[
\partial\left(  f+g\right)  \left(  x\right)  =x^{\ast}+y^{\ast}+\left(
C-x\right)  ^{-}=x^{\ast}+\left(  C-x\right)  ^{-}+y^{\ast}+\left(
C-x\right)  ^{-}=\partial f\left(  x\right)  +\partial g\left(  x\right)
\]
as desired.\hfill$\blacksquare$

\bigskip
\begin{sloppypar}
We close by considering the characterization of \emph{quasiconvex functionals }through their wa-differentials.\end{sloppypar} Recall that $f:C\rightarrow\mathbb{R}$ is quasiconvex if
\[
f\left(  tx+\left(  1-t\right)  y\right)  \leq\max\left\{  f\left(  x\right)
,f\left(  y\right)  \right\}
\]
holds for all $x,y\in C$ and for all $t\in\left(  0,1\right)  $. Relation
(\ref{eq:diquac}) below is the wa-differentiable extension of the
Arrow-Enthoven \cite{Arrow} result.

\begin{proposition}
\label{prop:quasiconvex}A wa-differentiable $f:C\rightarrow\mathbb{R}$ is
quasiconvex if and only if
\begin{equation}
f\left(  y\right)  \leq f\left(  x\right)  \Longrightarrow Df(x;y)\leq0
\label{eq:diquac}%
\end{equation}
for all $x,y\in C$.
\end{proposition}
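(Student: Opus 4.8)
The plan is to reduce the statement to the classical one-dimensional characterization of quasiconvexity along each line segment. For fixed $x,y\in C$ write $\varphi=\varphi_{x,y}$, where $\varphi(t)=f(x_t)$ and $x_t=(1-t)x+ty$. By Lemma \ref{lmm:ccc} this $\varphi$ is differentiable on $[0,1]$, with $\varphi'_{+}(0)=Df(x;y)$ and, crucially, with the intersegment formula (\ref{eq:gen}), which I rewrite as $Df(x_s;x_u)=(u-s)\varphi'(s)$ for $s\in(0,1)$ and $u\in[0,1]$, $u\neq s$. Applying the hypothesis (\ref{eq:diquac}) to the pair $(x_s,x_u)$ of points of $C$ then transforms it verbatim into the one-dimensional condition that, for every $s\in(0,1)$ and every $u\in[0,1]$ with $u\neq s$,
\[
\varphi(u)\leq\varphi(s)\ \Longrightarrow\ (u-s)\,\varphi'(s)\leq0 .
\]
So the whole proposition becomes the equivalence, for a function differentiable on $[0,1]$, between this derivative condition and the quasiconvexity of $\varphi$, and I will establish the two implications at that level.

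For necessity I would argue directly: if $f$ is quasiconvex and $f(y)\leq f(x)$, then $\varphi(t)=f(x_t)\leq\max\{f(x),f(y)\}=f(x)=\varphi(0)$ for all $t\in[0,1]$, so every difference quotient $\bigl(\varphi(t)-\varphi(0)\bigr)/t$ is nonpositive; letting $t\downarrow0$ and using $\varphi'_{+}(0)=Df(x;y)$ gives $Df(x;y)\leq0$. This direction is routine and does not need the full strength of Lemma \ref{lmm:ccc} beyond the value of the one-sided derivative at $0$.

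For sufficiency I would fix $x,y$ and show $\varphi$ is quasiconvex by contradiction. If it is not, there are $a<c<b$ in $[0,1]$ with $\varphi(c)>\max\{\varphi(a),\varphi(b)\}$; note $c\in(0,1)$ since it lies strictly between the endpoints of $[0,1]$. The classical mean value theorem on $[a,c]$ and on $[c,b]$ produces $\xi_{1}\in(a,c)$ and $\xi_{2}\in(c,b)$, both in $(0,1)$, with $\varphi'(\xi_{1})>0$ and $\varphi'(\xi_{2})<0$ (strict signs because the two chord slopes are strictly positive and strictly negative). Reading the contrapositive of the one-dimensional condition at $\xi_{1}$, where $\varphi'(\xi_{1})>0$, forces $\varphi(u)>\varphi(\xi_{1})$ for all $u>\xi_{1}$; symmetrically, at $\xi_{2}$, where $\varphi'(\xi_{2})<0$, it forces $\varphi(u)>\varphi(\xi_{2})$ for all $u<\xi_{2}$. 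Evaluating the first at $u=\xi_{2}$ and the second at $u=\xi_{1}$ (legitimate since $\xi_{1}<\xi_{2}$) yields simultaneously $\varphi(\xi_{2})>\varphi(\xi_{1})$ and $\varphi(\xi_{1})>\varphi(\xi_{2})$, a contradiction. Hence every $\varphi_{x,y}$ is quasiconvex, which is exactly the quasiconvexity of $f$.

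The delicate point is this sufficiency direction: the inequalities in (\ref{eq:diquac}) are weak and by themselves tolerate plateaus, so a naive interior-maximum argument does not close. The device that makes it work is to use the mean value theorem to locate two interior points with strictly opposite-signed derivatives and then to interpret the condition as a strict \emph{monotone separation} at each of them, obtaining the contradiction $\varphi(\xi_{1})>\varphi(\xi_{2})>\varphi(\xi_{1})$. I would only need to double-check that $c,\xi_{1},\xi_{2}$ all lie in the open interval $(0,1)$, so that Lemma \ref{lmm:ccc} and (\ref{eq:gen}) apply there; this holds because they are strictly interior to $[a,b]\subseteq[0,1]$.
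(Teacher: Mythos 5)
Your proof is correct and follows essentially the same route as the paper: both reduce the statement to the one-dimensional characterization of quasiconvexity along segments, using Lemma \ref{lmm:ccc} and the identity (\ref{eq:gen}) to translate (\ref{eq:diquac}) verbatim into the condition $\varphi(u)\leq\varphi(s)\Longrightarrow(u-s)\varphi'(s)\leq0$ for the auxiliary functions $\varphi_{x,y}$. The only divergence is that the paper invokes this one-dimensional characterization as a known result (citing \cite[p.~90]{crouzeix}), whereas you prove it from scratch via the mean value theorem and the strict \emph{monotone separation} reading of the contrapositive at the two interior points $\xi_{1},\xi_{2}$; your argument is sound (in particular $c,\xi_{1},\xi_{2}$ do lie in the open interval $(0,1)$, so (\ref{eq:gen}) applies there) and has the modest merit of making the proposition self-contained.
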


\noindent\textbf{Proof} Let $f:C\rightarrow\mathbb{R}$ be wa-differentiable.
The function $f$ is quasiconvex if and only if, for all $x,y\in C$, its
restrictions $\varphi_{x,y}\left(  t\right)  =f\left(  x_{t}\right)  $ are
quasiconvex. By Lemma \ref{lmm:ccc}, $f$ is hemidifferentiable. The
characterization of one-dimensional quasiconvex and differentiable functions
is
\begin{equation}
\varphi_{x,y}\left(  t_{2}\right)  \leq\varphi_{x,y}\left(  t_{1}\right)
\Longrightarrow\varphi_{x,y}^{\prime}\left(  t_{1}\right)  (t_{2}-t_{1})\leq0
\label{eq:difqual}%
\end{equation}
for all $t_{1},t_{2}\in\left[  0,1\right]  $ (See for instance \cite[p.90]%
{crouzeix}). In view of (\ref{eq:gen}), it becomes
\[
f\left(  x_{t_{2}}\right)  \leq f\left(  x_{t_{1}}\right)  \Longrightarrow
\frac{1}{t_{2}-t_{1}}Df(x_{t_{1}};x_{t_{2}})(t_{2}-t_{1})\leq0
\]
for all $t_{1},t_{2}\in\left[  0,1\right]  $. By setting $t_{1}=0$ and
$t_{2}=1$ the desired result follows.\hfill$\blacksquare$

\bigskip

Proposition \ref{prop:quasiconvex} is not at all novel. There are already
several extensions of the Arrow-Enthoven characterization in the literature.
Diewert \cite[Corollary 6]{diewert} characterized quasiconvex functions for
radially lsc functions via Dini-derivatives. Without\ any further assumption
Lara \cite[Prop. 3.1]{lara} gave a characterization via global directional derivatives.

What is most interesting is that for our class of functions, we can formulate
a good definition of pseudoconvexity, extending Mangasarian's
\cite{mangasarian} result.

Recall the following notion of subdifferential useful in quasiconvex analysis
(see more details in \cite{green}). The \emph{Greenberg-Pierskalla
subdifferential} of a function $g:C\rightarrow\mathbb{R}$ at $x\in C$ is%
\[
\partial_{GP}g\left(  x\right)  =\left\{  x^{\ast}\in X^{\ast}:y\in C\text{
and }\left\langle y-x,x^{\ast}\right\rangle \geq0\Longrightarrow g\left(
y\right)  \geq g\left(  x\right)  \right\}  \text{.}%
\]

\begin{definition}
\label{def:pseudo}A wa-differentiable function $f:C\rightarrow\mathbb{R}$ is
said to be \emph{pseudoconvex} if%
\begin{equation}
f\left(  y\right)  <f\left(  x\right)  \Longrightarrow Df(x;y)<0
\label{eq:pseu}%
\end{equation}
for all $x,y\in C.$ When $f$ is a-differentiable, equation (\ref{eq:pseu}) is
equivalent to $\nabla_{a}f\left(  x\right)  \subseteq\partial_{GP}f\left(
x\right)  $ for every $x\in C$.
\end{definition}

Observe that (\ref{eq:pseu}) is equivalent to $Df(x;y)\geq0\Longrightarrow
f\left(  y\right)  \geq f\left(  x\right)  $ and $Df(x;y)\geq0$ means
$\left\langle \nabla_{a}f\left(  x\right)  ,y-x\right\rangle \geq0$, whenever
$f$ is a-differentiable. Therefore the inclusion $\nabla_{a}f\left(  x\right)
\subseteq\partial_{GP}f\left(  x\right)  $ is true. The next results justify
our Definition \ref{def:pseudo}.

\begin{proposition}
Let $f:C\rightarrow\mathbb{R}$ be wa-differentiable.

\begin{enumerate}
\item[(i)] if $f$ is convex, then $f$ is pseudoconvex;

\item[(ii)] if $f$ is pseudoconvex, $f$ is quasiconvex, provided $f$ is a-differentiable.
\end{enumerate}
\end{proposition}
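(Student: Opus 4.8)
The plan is to handle the two implications separately, using the convexity characterization of Proposition \ref{prop:AAA} for (i) and the quasiconvexity criterion of Proposition \ref{prop:quasiconvex} for (ii).

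For (i), I would simply invoke the implication (i)$\Rightarrow$(ii) of Proposition \ref{prop:AAA}. Since $f$ is convex and wa-differentiable, the subgradient inequality (\ref{eq:cob}) gives $Df(x;y)\le f(y)-f(x)$ for all $x,y\in C$. Hence, whenever $f(y)<f(x)$, the right-hand side is strictly negative, so $Df(x;y)<0$, which is exactly the defining condition (\ref{eq:pseu}) of pseudoconvexity. This step is routine and needs no further work.

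For (ii), the strategy is to verify the criterion of Proposition \ref{prop:quasiconvex}, namely that $f(y)\le f(x)$ implies $Df(x;y)\le 0$ for all $x,y\in C$; quasiconvexity then follows at once. I would split the hypothesis $f(y)\le f(x)$ into two cases. When $f(y)<f(x)$, pseudoconvexity (\ref{eq:pseu}) directly yields $Df(x;y)<0\le 0$, so nothing more is required. The genuine difficulty is the boundary case $f(y)=f(x)$, where pseudoconvexity is silent yet I must still produce $Df(x;y)\le 0$. To treat it I would argue by contradiction: suppose $f(x)=f(y)$ but $Df(x;y)>0$, and pass to the auxiliary function $\varphi(t)=f(x_t)$ with $x_t=(1-t)x+ty$. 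By Lemma \ref{lmm:ccc}, $\varphi$ is continuous and differentiable on $[0,1]$ with $\varphi_+^{\prime}(0)=Df(x;y)>0$, so $\varphi(t_0)>\varphi(0)$ for some small $t_0>0$. Combined with $\varphi(1)=f(y)=f(x)=\varphi(0)$, the continuous function $\varphi$ attains its maximum over $[0,1]$ at an interior point $s\in(0,1)$ with $\varphi(s)>\varphi(0)$. At this interior maximum $\varphi^{\prime}(s)=0$, so the identity (\ref{eq:ghl}) gives $Df(x_s;x)=-s\,\varphi^{\prime}(s)=0$. On the other hand $f(x)=\varphi(0)<\varphi(s)=f(x_s)$, so applying pseudoconvexity (\ref{eq:pseu}) with base point $x_s$ and direction $x$ forces $Df(x_s;x)<0$, contradicting $Df(x_s;x)=0$. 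This excludes $Df(x;y)>0$ and settles the equality case.

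The main obstacle is exactly this equality case: pseudoconvexity only supplies strict conclusions, and the interior-maximum device is what upgrades that to the non-strict inequality demanded by Proposition \ref{prop:quasiconvex}. It is worth observing that the whole argument runs on wa-differentiability, through Lemma \ref{lmm:ccc} and Proposition \ref{prop:quasiconvex}; the a-differentiability assumed in the statement guarantees the intrinsic gradient representation (\ref{eq:FFT}) used throughout the section but is not itself essential to this route.
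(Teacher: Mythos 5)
Your proof is correct, and part (ii) takes a genuinely different route from the paper. For (i) both you and the paper simply read pseudoconvexity off the subgradient inequality (\ref{eq:cob}), so there is nothing to compare. For (ii) the paper argues through the Greenberg--Pierskalla subdifferential: a-differentiability plus pseudoconvexity yields $\nabla_{a}f(x)\subseteq\partial_{GP}f(x)$, hence $\partial_{GP}f(x)\neq\emptyset$ at every point, and quasiconvexity then follows from an external result (Corollary 4 of \cite{green}). You instead verify the Arrow--Enthoven-type criterion of Proposition \ref{prop:quasiconvex} directly, disposing of the strict case by (\ref{eq:pseu}) and handling the equality case $f(x)=f(y)$ with an interior-maximum argument on the auxiliary function $\varphi$: if $Df(x;y)>0$ then $\varphi$ peaks at some $s\in(0,1)$ with $\varphi(s)>\varphi(0)$, the identity (\ref{eq:ghl}) forces $Df(x_{s};x)=0$, and pseudoconvexity at $x_{s}$ forces $Df(x_{s};x)<0$, a contradiction. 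Each step checks out (Lemma \ref{lmm:ccc} gives the continuity and differentiability of $\varphi$ that the extreme-value and interior-critical-point arguments need). What your route buys is twofold: it is self-contained, relying only on results proved in the paper rather than on a cited corollary, and --- as you correctly observe --- it never uses extendability of the differential, so it establishes the implication under wa-differentiability alone, strictly weakening the hypothesis of part (ii) as stated. The paper's route, by contrast, is shorter on the page and situates the result within the Greenberg--Pierskalla framework that Definition \ref{def:pseudo} already invokes.
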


\begin{proof}
Point (i) is a straightforward consequence of the characterization in
(\ref{eq:cob}) given for convex functions.

As for (ii), since $\nabla_{a}f\left(  x\right)  \subseteq\partial
_{GP}f\left(  x\right)  $, it follows that $\partial_{GP}f\left(  x\right)
\neq\emptyset$ for every $x\in C$. This implies that $f$ is quasiconvex (for a
proof we refer to \cite[Corollary 4]{green}).
\end{proof}

\section{Optimization\label{ch2}}

We begin with the somewhat familiar first-order condition for a (global) minimizer.

\begin{proposition}
\label{ottimo}Let $f:C\rightarrow\mathbb{R}$ have affine directional
derivative. If $\hat{x}\in C$ is a minimizer of $f$, then%
\begin{equation}
Df\left(  \hat{x};y\right)  \geq0\qquad\forall y\in C. \label{eq:foc-bis}%
\end{equation}
Moreover, it holds with equality when $\hat{x}\in\operatorname*{ri}C$,
provided $f$ is wa-differentiable.
\end{proposition}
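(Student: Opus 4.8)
The plan is to extract both claims directly from the definition of the affine directional derivative, exactly as the difference quotient at a minimizer forces a sign. First I would fix any $y \in C$ and consider the segment $x_t = (1-t)\hat{x} + t y$, which lies in $C$ for all $t \in [0,1]$ by convexity. Since $\hat{x}$ is a global minimizer, $f(x_t) \ge f(\hat{x})$ for every such $t$, so the numerator $f((1-t)\hat{x}+ty) - f(\hat{x})$ is nonnegative. Dividing by $t > 0$ keeps the sign, and passing to the limit as $t \downarrow 0$ gives
\[
Df(\hat{x};y) = \lim_{t\downarrow 0}\frac{f((1-t)\hat{x}+ty)-f(\hat{x})}{t} \ge 0,
\]
which is (\ref{eq:foc-bis}). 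This part needs only that the affine directional derivative exists (finiteness of the limit), which is the standing hypothesis, and no affinity of $Df(\hat{x};\cdot)$ is required.

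For the equality claim when $\hat{x} \in \operatorname*{ri} C$, the idea is to exploit the ability to move in the opposite direction. By (\ref{eq:sat}) and the affinity supplied by wa-differentiability, $Df(\hat{x};\cdot)$ is an affine function on $C$ with $Df(\hat{x};\hat{x})=0$; so it suffices to exhibit, for each $y \in C$, a companion point $y' \in C$ with $Df(\hat{x};y') \le -Df(\hat{x};y)$, and then combine with (\ref{eq:foc-bis}) applied to $y'$. Concretely, since $\hat{x}$ is internal, the direction $\hat{x}-y$ is feasible: there is $\varepsilon > 0$ with $y' := \hat{x} + \varepsilon(\hat{x}-y) \in C$. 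Then $y'$ is an affine combination $ (1+\varepsilon)\hat{x} - \varepsilon y$, and evaluating the affine functional $Df(\hat{x};\cdot)$ on it, using $Df(\hat{x};\hat{x})=0$, yields $Df(\hat{x};y') = -\varepsilon\, Df(\hat{x};y)$. Applying (\ref{eq:foc-bis}) to $y'$ gives $-\varepsilon\, Df(\hat{x};y) \ge 0$, i.e.\ $Df(\hat{x};y) \le 0$, and together with $Df(\hat{x};y)\ge 0$ we conclude $Df(\hat{x};y)=0$.

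The main obstacle I anticipate is purely bookkeeping: one must verify that the point $y'$ indeed lies in $C$ so that (\ref{eq:foc-bis}) legitimately applies to it, and that the affine extension of $Df(\hat{x};\cdot)$ to the affine combination $y'$ behaves as the homogeneity relation (\ref{eq:sat}) and affinity predict. The relative-interior hypothesis is exactly what guarantees feasibility of the backward direction $\hat{x}-y$ within $\operatorname*{aff} C$, so the argument cannot be weakened to mere wa-differentiability without it. I would take care to state explicitly that affinity of $Df(\hat{x};\cdot)$ on $C$ is what lets us evaluate it on the affine combination defining $y'$, since this is the only place the wa-differentiability assumption (rather than mere existence of the directional derivative) is used.
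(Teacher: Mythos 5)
Your proof is correct, and for the equality part it takes a genuinely different route from the paper's. The paper also uses the relative-interior hypothesis to produce a point $x=\hat{x}+\tfrac{\bar{t}}{1-\bar{t}}(\hat{x}-y)\in C$ behind $\hat{x}$, but then invokes Lemma \ref{lmm:ccc} to conclude that the scalar function $\varphi(t)=f((1-t)x+ty)$ is (two-sidedly) differentiable at the interior point $\bar{t}$ with $\varphi'(\bar{t})=\tfrac{1}{1-\bar{t}}Df(\hat{x};y)$, and finishes by the classical fact that an interior minimizer of a differentiable scalar function is a critical point. You instead stay entirely at the level of the first-order inequality: you apply (\ref{eq:foc-bis}) to the reflected point $y'=(1+\varepsilon)\hat{x}-\varepsilon y$ and use the affinity of $Df(\hat{x};\cdot)$ to get $Df(\hat{x};y')=-\varepsilon\,Df(\hat{x};y)$. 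This is more elementary (no appeal to the hemidifferentiability lemma) and makes visible that only wa-differentiability \emph{at the point} $\hat{x}$ is used, whereas the paper's statement assumes it globally. The one step you flag but do not carry out deserves a line: affinity of $Df(\hat{x};\cdot)$ is defined only for convex combinations, and $y'$ is an affine combination with coefficients outside $[0,1]$; the fix is to invert the relation and write $\hat{x}=\tfrac{1}{1+\varepsilon}y'+\tfrac{\varepsilon}{1+\varepsilon}y$, a genuine convex combination of the two points $y',y\in C$, whence $0=Df(\hat{x};\hat{x})=\tfrac{1}{1+\varepsilon}Df(\hat{x};y')+\tfrac{\varepsilon}{1+\varepsilon}Df(\hat{x};y)$ and the claimed identity follows. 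With that line added, your argument is complete.
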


Since $Df\left(  \hat{x};\hat{x}\right)  =0$, an equivalent condition to
(\ref{eq:foc-bis}) is clearly
\begin{equation}
\min_{y\in C}\text{ }Df\left(  \hat{x};y\right)  =0. \label{eq:SSS}%
\end{equation}
Observe further that the necessary condition (\ref{eq:foc-bis}) turns out to
be sufficient as well, provided $f$ is pseudoconvex.

\bigskip

\noindent\textbf{Proof} \ Condition (\ref{eq:foc-bis}) is standard and its
proof is omitted.

It remains to show that if $\hat{x}\in\operatorname*{ri}C$ and $f$ is
wa-differentiable then its differential vanishes. Fix a point $y\in C$ that
differs from $\hat{x}$. As $\hat{x}\in\operatorname*{ri}C$, there exist $x\in
C$ and $\bar{t}\in\left(  0,1\right)  $ such that $\left(  1-\bar{t}\right)
x+\bar{t}y=\hat{x}$. By Lemma \ref{lmm:ccc}, the function $\varphi\left(
t\right)  =f\left(  x_{t}\right)  $ is differentiable at $\bar{t}$ and
\[
\varphi^{\prime}\left(  \bar{t}\right)  =\frac{1}{1-\bar{t}}Df\left(
x_{\bar{t}};y\right)  =\frac{1}{1-\bar{t}}Df\left(  \hat{x};y\right)
\]
On the other hand $\varphi$ has a minimizer at the interior point $\bar{t}$.
Hence, $\varphi^{\prime}\left(  \bar{t}\right)  =0$ and so $Df\left(  \hat
{x};y\right)  =0$ for all $y\in C$.\hfill$\blacksquare$

\bigskip

Let $S$ be a topological space and $f:S\times C\rightarrow\mathbb{R}$ a
parametric objective function (here $C$ is interpreted a set of parameters).
Define the \emph{value function} $v:C\rightarrow\mathbb{R}$ by
\[
v(x)=\sup_{s\in S}f(s,x)
\]
and the associated \emph{solution correspondence} $\sigma:C\rightrightarrows
S$ by%
\[
\sigma\left(  x\right)  =\arg\max_{s\in S}f(s,x)
\]
We say that $\sigma$ is \emph{viable} when $\sigma\left(  x\right)
\neq\emptyset$ for all $x\in C$.

A piece of notation: for a fixed element $s\in S$, $Df_{s}(x;y)$ denotes the
affine directional derivative at $x\in C$ of the section $f_{s}:C\rightarrow
\mathbb{R}$ of the objective function $f$. Our first result provides an
estimate for the wa-differential of the value function.

\begin{proposition}
\label{envelope} Let $f_{s}:C\rightarrow\mathbb{R}$ be wa-differentiable for
each $s\in S$. If $v$ is wa-differentiable and $\sigma$ is viable, then
\begin{equation}
Dv(x;y)\geq Df_{s}(x;y)\qquad\forall y\in C \label{eq:inefoc}%
\end{equation}
for each $x\in C$ and each $s\in\sigma\left(  x\right)  $, with equality when
$x\in\operatorname*{ri}C$.
\end{proposition}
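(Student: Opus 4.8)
The plan is to treat the inequality (\ref{eq:inefoc}), valid at every $x\in C$, separately from the equality that holds at relative interior points, since the two rely on different ideas.

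For the inequality, fix $x\in C$ and $s\in\sigma(x)$. The optimality of $s$ says exactly that $f_s(x)=v(x)$, whereas $f_s(z)\le v(z)$ for every $z\in C$ by definition of the value function. First I would compare the two difference quotients along the segment $x_t=(1-t)x+ty$: using $f_s(x)=v(x)$ together with $f_s(x_t)\le v(x_t)$,
\[
\frac{f_s(x_t)-f_s(x)}{t}=\frac{f_s(x_t)-v(x)}{t}\le\frac{v(x_t)-v(x)}{t}\qquad\forall t\in(0,1].
\]
Both sides converge as $t\downarrow 0$, because $f_s$ and $v$ are wa-differentiable and hence possess (affine) directional derivatives at $x$ in the sense of Definition \ref{def:dir}. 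Passing to the limit gives $Df_s(x;y)\le Dv(x;y)$, which is (\ref{eq:inefoc}).

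For the equality at $x\in\operatorname{ri}C$, I would introduce the map $h:=Dv(x;\cdot)-Df_s(x;\cdot)$ on $C$. Since $v$ and $f_s$ are wa-differentiable, both $Dv(x;\cdot)$ and $Df_s(x;\cdot)$ are affine on $C$ (Definition \ref{def:aff}), so $h$ is affine; the inequality just proved shows $h\ge 0$ on $C$, and $Dv(x;x)=Df_s(x;x)=0$ gives $h(x)=0$. The claim then reduces to the elementary fact that a nonnegative affine function on $C$ vanishing at a relative interior point must vanish identically. To establish this I would fix an arbitrary $y\in C$ and, invoking $x\in\operatorname{ri}C$ to prolong the segment slightly beyond $x$ in the direction $x-y$, obtain $\varepsilon>0$ with $z:=x+\varepsilon(x-y)\in C$; then $x=\lambda y+(1-\lambda)z$ with $\lambda=\varepsilon/(1+\varepsilon)\in(0,1)$. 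Affinity of $h$ yields $0=h(x)=\lambda h(y)+(1-\lambda)h(z)$, and since $h(y),h(z)\ge 0$ and $\lambda,1-\lambda>0$, necessarily $h(y)=0$. As $y$ is arbitrary, $h\equiv 0$, that is, $Dv(x;y)=Df_s(x;y)$ for all $y\in C$.

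The only delicate point is this last step: I must realize $x$ as a genuine two-sided convex combination straddling it, and this is precisely where the hypothesis $x\in\operatorname{ri}C$ (not merely $x\in C$) enters, through the possibility of extending the segment a little past $x$ while staying in $C$. Everything else is routine: the limits exist and are finite by wa-differentiability, and viability of $\sigma$ only serves to guarantee that there is some $s\in\sigma(x)$ for which the statement has content.
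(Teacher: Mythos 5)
Your proof is correct, and its overall shape matches the paper's: the paper also reduces everything to the observation that $\Phi=v-f_{s}$ is nonnegative with a minimum at $x$, and then applies the first-order condition of Proposition \ref{ottimo}. Your treatment of the inequality is precisely the (omitted, ``standard'') proof of that condition, unpacked as a monotone comparison of difference quotients, so there is nothing new there. Where you genuinely diverge is the equality at $x\in\operatorname*{ri}C$: the paper obtains it from the second half of Proposition \ref{ottimo}, whose proof runs through hemidifferentiability (Lemma \ref{lmm:ccc}) --- one writes $x=(1-\bar{t})z+\bar{t}y$ with $\bar{t}\in(0,1)$ and concludes from $\varphi^{\prime}(\bar{t})=0$ at the interior minimum of the auxiliary scalar function. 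You instead work directly with $h=Dv(x;\cdot)-Df_{s}(x;\cdot)$, a nonnegative affine function vanishing at $x$, and kill it by the two-sided convex combination $x=\lambda y+(1-\lambda)z$. Both arguments exploit the same geometric content of $\operatorname*{ri}C$ (the segment from $y$ can be prolonged slightly past $x$ inside $C$), and both need the affinity of the differentials, which the wa-differentiability hypotheses supply; but your route is purely algebraic and bypasses Lemma \ref{lmm:ccc} entirely, at the cost of being a little longer than simply citing the first-order condition. One could even note that your affine-function lemma is exactly why, in this wa-differentiable setting, the first-order inequality at a relative interior point forces equality --- so the two proofs are two presentations of the same mechanism, yours being the more self-contained.
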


\noindent\textbf{Proof} Fix $x\in C$ and $s\in\sigma\left(  x\right)  $. The
function $\Phi:C\rightarrow\mathbb{R}$ given by $\Phi\left(  y\right)
=v\left(  y\right)  -f\left(  s,y\right)  $ is wa-differentiable. We have
$\Phi\geq0$ and $\Phi\left(  x\right)  =0$. Hence, $x$ is a minimizer and so
the first-order condition (\ref{eq:foc-bis}) gives the inequality
(\ref{eq:inefoc}). If $x\in\operatorname*{ri}C$ we then get the desired
equality.\hfill$\blacksquare$

\bigskip

The next result, a variant of the classic Danskin-Demyanov envelope
theorem\footnote{The Danskin's and Demyanov envelope theorem was
simultaneously and completely independently proved by both Danskin and
Demyanov in the mid-1960s (see \cite{dan} and references to
Demyanov's original papers and historical comments in Section 5.4.2 of
\cite{polak}). We thank an anonymous referee for mentioning Demyanov's work.},
provides conditions under which the affine directional derivative (not
necessarily the wa-derivative) of the value function does exist. As an
additional piece of notation, $x_{t}$ will denote the usual convex combination
$x_{t}=\left(  1-t\right)  x+ty$.

\begin{theorem}
\label{thm:danskin}Let $\sigma$ be viable. Given $x,y\in C$, assume that $f$
satisfies the following two properties:

\begin{enumerate}
\item[(i)] the affine directional derivative $Df_{s}(x;y)$ exists for all
$s\in\sigma\left(  x\right)  $;

\item[(ii)] for every sequence $\left\{  t_{n}\right\}  \subseteq\left[
0,1\right]  $ with $t_{n}\downarrow0$, there is a sequence $s_{n}\in
\sigma\left(  x_{t_{n}}\right)  \in S$, such that%
\begin{equation}
\limsup_{n\rightarrow\infty}\frac{f(s_{n},x_{t_{n}})-f(s_{n},x)}{t_{n}}%
\leq\sup_{s\in\sigma\left(  x\right)  }Df_{s}(x;y) \label{eq:FFF}%
\end{equation}

\end{enumerate}

Then, the affine directional derivative $Dv(x;y)$ exists and%
\[
Dv(x;y)=\sup_{s\in\sigma\left(  x\right)  }Df_{s}(x;y)
\]

\end{theorem}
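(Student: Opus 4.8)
The plan is to establish the existence of $Dv(x;y)$ and its value by proving the two matching one-sided bounds $\liminf_{t\downarrow 0}q(t)\ge L$ and $\limsup_{t\downarrow 0}q(t)\le L$, where I abbreviate $q(t)=\tfrac{v(x_t)-v(x)}{t}$ and $L=\sup_{s\in\sigma(x)}Df_s(x;y)$. Since viability guarantees $\sigma(x)\neq\emptyset$, the quantity $L$ is well defined (possibly $+\infty$ a priori, though the argument will show it is finite). The whole proof rests on the two defining features of the value function: $v(z)=f(s,z)$ whenever $s\in\sigma(z)$, and $v(z)\ge f(s,z)$ for every $s\in S$.

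First I would prove the lower bound. Fix any $s\in\sigma(x)$. Then $v(x)=f(s,x)$ by optimality of $s$ at $x$, while $v(x_t)\ge f(s,x_t)$ for every $t$ by definition of the supremum, so
\[
q(t)=\frac{v(x_t)-v(x)}{t}\ge\frac{f(s,x_t)-f(s,x)}{t}.
\]
By hypothesis (i) the right-hand side converges, as $t\downarrow 0$, to $Df_s(x;y)$, whence $\liminf_{t\downarrow 0}q(t)\ge Df_s(x;y)$. As $s\in\sigma(x)$ was arbitrary, taking the supremum over $\sigma(x)$ yields $\liminf_{t\downarrow 0}q(t)\ge L$.

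Next I would prove the upper bound, which is where hypothesis (ii) is used. Set $M=\limsup_{t\downarrow 0}q(t)$ and choose a sequence $t_n\downarrow 0$ with $q(t_n)\to M$. Apply (ii) to this very sequence to obtain $s_n\in\sigma(x_{t_n})$ satisfying the $\limsup$ estimate in \eqref{eq:FFF}. Since $s_n$ is optimal at $x_{t_n}$ we have $v(x_{t_n})=f(s_n,x_{t_n})$, and since $v(x)\ge f(s_n,x)$ we obtain
\[
q(t_n)=\frac{f(s_n,x_{t_n})-v(x)}{t_n}\le\frac{f(s_n,x_{t_n})-f(s_n,x)}{t_n}.
\]
Passing to the $\limsup$ in $n$ and invoking \eqref{eq:FFF} gives $M=\limsup_n q(t_n)\le\limsup_n\tfrac{f(s_n,x_{t_n})-f(s_n,x)}{t_n}\le L$. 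Combining the two bounds, $L\le\liminf_{t\downarrow 0}q(t)\le\limsup_{t\downarrow 0}q(t)=M\le L$, so all terms coincide; hence $\lim_{t\downarrow 0}q(t)$ exists, is finite, and equals $L$, which is precisely the claim.

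The argument is essentially a soft one-line sandwich once the right quantities are aligned, so there is no deep analytic obstacle. The only delicate point is the bookkeeping in the upper bound: one must \emph{first} extract the subsequence $\{t_n\}$ that realizes the $\limsup$ of $q$, and \emph{then} feed exactly that sequence into hypothesis (ii); this ordering is what lets the ``for every sequence'' quantifier in (ii) deliver maximizers $s_n$ tailored to the worst-case behavior of $v$. The role of viability is to ensure that both the reference maximizer $s$ at $x$ and the maximizers $s_n$ at the nearby points $x_{t_n}$ actually exist, so that the identities $v(x)=f(s,x)$ and $v(x_{t_n})=f(s_n,x_{t_n})$ are available.
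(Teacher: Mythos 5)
Your proof is correct and follows essentially the same route as the paper's: the lower bound comes from fixing $s_0\in\sigma(x)$ and using $v(x_{t})\ge f(s_0,x_{t})$ together with $v(x)=f(s_0,x)$, and the upper bound comes from feeding a sequence $t_n\downarrow 0$ into hypothesis (ii) and using the decomposition $v(x_{t_n})-v(x)=\bigl(f(s_n,x_{t_n})-f(s_n,x)\bigr)+\bigl(f(s_n,x)-v(x)\bigr)\le f(s_n,x_{t_n})-f(s_n,x)$. Your extra care in first extracting the sequence realizing $\limsup_{t\downarrow 0}q(t)$ before invoking (ii) is a minor (and harmless) refinement of the same sandwich argument.
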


\bigskip

\noindent\textbf{Proof} We first show that, for each $y\in C$,
\begin{equation}
\lim\inf_{t\downarrow0}\frac{v\left(  x_{t}\right)  -v\left(  x\right)  }%
{t}\geq\sup_{s\in\sigma\left(  x\right)  }Df_{s}\left(  x;y\right)
\label{eq:liminf}%
\end{equation}
Fix $y\in C$ and pick a point $s_{0}\in\sigma\left(  x\right)  $. Take any
sequence $t_{n}\in\left(  0,1\right)  $ with $t_{n}\downarrow0$. By
definition,%
\[
\frac{v\left(  x_{t_{n}}\right)  -v\left(  x\right)  }{t_{n}}\geq
\frac{f\left(  s_{0},x_{t_{n}}\right)  -f\left(  s_{0},x\right)  }{t_{n}}%
\]
Hence,
\[
\lim\inf_{n\rightarrow\infty}\frac{v\left(  x_{t_{n}}\right)  -v\left(
x\right)  }{t_{n}}\geq Df_{s_{0}}\left(  x;y\right)
\]
This is true for every sequence $t_{n}\downarrow0$. Consequently,%
\[
\lim\inf_{t\downarrow0}\frac{v\left(  x_{t}\right)  -v\left(  x\right)  }%
{t}\geq Df_{s_{0}}\left(  x;y\right)
\]
As this is true for every $s_{0}\in\sigma\left(  x\right)  $, we get
(\ref{eq:liminf}). To end the proof, let us prove that%
\begin{equation}
\lim\sup_{t\downarrow0}\frac{v\left(  x_{t}\right)  -v\left(  x\right)  }%
{t}\leq\sup_{s\in\sigma\left(  x\right)  }Df_{s}\left(  x;y\right)
\label{eq:limsup}%
\end{equation}
Given a sequence $t_{n}\downarrow0$, define $\left\{  s_{n}\right\}  $
according to assumption (ii). Then:
\begin{align*}
\frac{v(x_{t_n}) - v(x)}{t_n}
  &= \frac{f(s_n, x_{t_n}) - v(x)}{t_n} \\
  &= \frac{f(s_n, x_{t_n}) - f(s_n, x)}{t_n}
     + \frac{f(s_n, x) - v(x)}{t_n}
     \leq \frac{f(s_n, x_{t_n}) - f(s_n, x)}{t_n}
\end{align*}
In light of (ii) we have
\[
\lim\sup_{n\rightarrow\infty}\frac{v\left(  x_{t_{n}}\right)  -v\left(
x\right)  }{t_{n}}\leq\lim\sup_{n\rightarrow\infty}\frac{f\left(
s_{n},x_{t_{n}}\right)  -f\left(  s_{n},x\right)  }{t_{n}}\leq\sup_{s\in
\sigma\left(  x\right)  }Df_{s}(x;y)
\]
We get (\ref{eq:limsup}) that, in turn, yields the desired result.\hfill
$\blacksquare$

\bigskip

In the next example, both condition (ii) and the conclusion of the theorem are violated.
\begin{example}
\emph{Let}
\[
C=[0,1]\mathrm{ \ \ and}\quad S=\{0\}\cup\bigl(\tfrac{1}{2},1\bigr].
\]
\emph{Define}
\[
f(s,x)=%
\begin{cases}
-(x-s)^{2}, & s\in\bigl(\tfrac{1}{2},1\bigr]\\
\tfrac{1}{2}-x, & s=0.
\end{cases}
\]
\emph{Notice that (i) is satisfied since the sections} $f_{s}$ \emph{are
smooth for every} $s\in S$. \emph{It easy to check that}
\[
\sigma\left(  x\right)  =\left\{
\begin{array}
[c]{cc}%
\left\{  0\right\}  , & x\in\left[  0,1/2\right]  \\
\left\{  x\right\}  , & x\in\left(  1/2,1\right]
\end{array}
\right.
\]
\emph{Hence, the value function will be}%
\[
v\left(  x\right)  =\left\{
\begin{array}
[c]{cc}%
\tfrac{1}{2}-x & x\in\left[  0,1/2\right]  \\
0 & x\in\left(  1/2,1\right]
\end{array}
\right.
\]
\emph{Observe that (ii) is violated. Indeed, setting} $x_{n}=\tfrac{1}%
{2}+\tfrac{t_{n}}{2}\quad(t_{n}\downarrow0)$\emph{, we have}
\begin{align*}
\limsup_{n\rightarrow\infty}\frac{f\bigl(s_{n},x_{n}\bigr)-f\bigl(s_{n}%
,\tfrac{1}{2}\bigr)}{t_{n}} &  =\limsup_{n\rightarrow\infty}\frac
{0-\bigl[-\bigl(x_{n}-\tfrac{1}{2}\bigr)^{2}\bigr]}{t_{n}}\\[6pt]
&  \left.  =\limsup_{n\rightarrow\infty}\frac{t_{n}^{2}/4}{t_{n}}=0>-\tfrac
{1}{2}=\sup_{s\in\sigma(\tfrac{1}{2})}Df_{s}\bigl(\tfrac{1}{2};1\bigr)\right. .
\end{align*}
\emph{Note further that}
\[
Dv\Bigl(\tfrac{1}{2};1\Bigr)=0,\qquad\sup_{s\in\sigma(\tfrac{1}{2})}%
Df_{s}\Bigl(\tfrac{1}{2};1\Bigr)=-\tfrac{1}{2},
\]
\emph{hence }%
\[
Dv\bigl(\tfrac{1}{2};1\bigr)\neq\sup_{s\in\sigma(\tfrac{1}{2})}Df_{s}%
\bigl(\tfrac{1}{2};1\bigr)
\]
\hfill$\blacktriangle$
\end{example}
In Section \ref{bayesianrobustness}, we provide an application of this theorem
in statistics in which both assumptions (i) and (ii) are satisfied (see
Proposition \ref{prop:devrob}).

However condition (ii) may be sometimes hard to check. To clarify this point
we offer two sufficient conditions for the validity of (ii).

The first one assumes that the section functions $f_{s}$ are a-differentiable
with the gradients $\nabla_{a}f_{s}\left(  x\right)  $ being linearly
continuous, uniformly respect to the elements of the space $S.$ More
precisely, for all $x,y\in C$ the limit
\begin{equation}
\left\langle y-x,\nabla_{a}f_{s}\left(  \left(  1-\tau\right)  x+\tau
y\right)  \right\rangle \rightarrow\left\langle y-x,\nabla_{a}f_{s}\left(
x\right)  \right\rangle \label{unif}%
\end{equation}
as $\tau\rightarrow0^{+}$, must hold \emph{uniformly} over $s\in S$. Notice
that unlike $\nabla_{a}f_{s},$ the quantities
\[
\left\langle y-x,\nabla_{a}f_{s}\left(  \left(  1-\tau\right)  x+\tau
y\right)  \right\rangle ,
\]
are not affected by the elements chosen in the class $\left[  \nabla_{a}%
f_{s}\right]  $.

\begin{proposition}
Let $f_{s}$ be a-differentiable. Under (\ref{unif}) condition (ii) of Theorem
\ref{thm:danskin} holds.
\end{proposition}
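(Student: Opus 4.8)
The plan is to fix $x,y\in C$ and a sequence $t_n\downarrow 0$, to pick (by viability) any $s_n\in\sigma(x_{t_n})$, and to reduce the difference quotient in condition (ii) to the section gradients at $x$. Writing $\varphi_{s_n}(\tau)=f(s_n,x_\tau)$ and applying Lemma \ref{lmm:ccc} together with the mean value theorem (Theorem \ref{cor:mean}) to $\varphi_{s_n}$ on $[0,t_n]$, I get some $\tau_n\in(0,t_n)$ with
\[
\frac{f(s_n,x_{t_n})-f(s_n,x)}{t_n}=\varphi_{s_n}'(\tau_n)=\left\langle y-x,\nabla_a f_{s_n}(x_{\tau_n})\right\rangle .
\]
Since $\tau_n<t_n\to 0$, hypothesis (\ref{unif}) (uniformity in $s$) lets me replace the gradient at $x_{\tau_n}$ by the one at $x$ with an error that vanishes independently of $s_n$, so that $\tfrac{1}{t_n}\bigl(f(s_n,x_{t_n})-f(s_n,x)\bigr)=Df_{s_n}(x;y)+o(1)$. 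Thus condition (ii) is equivalent to the estimate $\limsup_n Df_{s_n}(x;y)\le\sup_{s\in\sigma(x)}Df_s(x;y)=:M$.

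To pin down the right-hand side I would introduce the first–order upper envelope
\[
W(t)=\sup_{s\in S}\bigl[f(s,x)+t\,Df_s(x;y)\bigr],
\]
a convex function of $t$ with $W(0)=v(x)$. Applying (\ref{unif}) in both directions and taking suprema over $s$ yields $|v(x_t)-W(t)|=o(t)$, so the one–sided derivative $Dv(x;y)$ exists and equals the convex right derivative $W_+'(0)$. The slopes $Df_s(x;y)$ of the affine pieces active at $t=0$ are exactly those with $s\in\sigma(x)$, whence $W_+'(0)\ge M$; this matches the easy liminf bound (\ref{eq:liminf}) already obtained in Theorem \ref{thm:danskin}.

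The substance of the proposition is therefore the reverse inequality $W_+'(0)\le M$, equivalently that no maximizer can drift, as $t\downarrow 0$, toward the optimal set while keeping a radial slope exceeding $M$. This is where viability enters. First, from $f(s_n,x)=v(x_{t_n})-t_n[Df_{s_n}(x;y)+o(1)]$ and the continuity of $v$ along the segment one deduces $f(s_n,x)\to v(x)$, so the chosen maximizers are asymptotically optimal at $x$. If some subsequence satisfied $Df_{s_n}(x;y)\ge M+2\eta$, then (\ref{unif}) would force $f(s_n,x_\rho)\ge v(x)+\rho(M+\eta)$ for every small fixed $\rho$, so the value attained near $s_n$ overtakes that of every exact maximizer $s^{\ast}\in\sigma(x)$, for which $Df_{s^{\ast}}(x;y)\le M$. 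The crucial point is that such improving parameters are only near-optimal at $x$ ($f(s_n,x)<v(x)$): along the approach $t\downarrow 0$ the supremum defining $v(x_t)$ would be chased by them without ever being attained, contradicting viability of $\sigma$ at small $t$. Hence $\limsup_n Df_{s_n}(x;y)\le M$, which is condition (ii).

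The main obstacle is precisely this last step: converting ``maximizers stay optimal in value but could a priori gain slope'' into a genuine contradiction. By itself (\ref{unif}) only synchronizes the radial derivatives across $s$; it neither bounds $\sup_{s\in S}Df_s(x;y)$ nor makes $s\mapsto Df_s(x;y)$ upper semicontinuous on $\sigma(x)$. What rescues the argument is that attainment of the maxima, $\sigma(x_t)\neq\emptyset$ for all small $t$, plays the role usually filled by compactness of $S$: it forbids the optimizer from escaping to an approaching family of strictly suboptimal, higher–slope parameters. Making this non-attainment contradiction rigorous without any compactness or semicontinuity in the $s$–variable is the delicate part, and I would expect a clean write-up to isolate it as a short lemma asserting $W_+'(0)=M$ under viability and (\ref{unif}).
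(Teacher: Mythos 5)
Your reduction of the difference quotient is exactly the paper's: apply the mean value theorem (Theorem \ref{cor:mean}) to the section $f_{s_{n}}$ along the segment from $x$ to $x_{t_{n}}$ to get $\frac{1}{t_{n}}\left(  f(s_{n},x_{t_{n}})-f(s_{n},x)\right)  =\left\langle y-x,\nabla_{a}f_{s_{n}}\left(  x_{\tau_{n}t_{n}}\right)  \right\rangle $, then invoke the uniformity in (\ref{unif}) to replace the gradient at $x_{\tau_{n}t_{n}}$ by the one at $x$, so that the quotient equals $Df_{s_{n}}(x;y)+o(1)$. Up to here you and the paper coincide. At the point where you stop, the paper finishes in one line by writing $Df_{s_{n}}(x;y)\leq\sup_{s\in\sigma(x)}Df_{s}(x;y)$ --- an inequality that would be immediate if $s_{n}$ belonged to $\sigma(x)$, whereas condition (ii) of Theorem \ref{thm:danskin} requires $s_{n}\in\sigma(x_{t_{n}})$. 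So the step you isolate as ``the substance of the proposition'' is precisely the step the paper asserts without argument, and your diagnosis is accurate: (\ref{unif}) only synchronizes the radial behaviour of each gradient and gives no control on how $Df_{s}(x;y)$ varies as $s$ ranges over the maximizers at nearby points.

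The repair you sketch, however, does not work, and the gap cannot be closed: viability is compatible with the maximizers at $x_{t}$ ``escaping to infinity'' in $S$ while keeping a slope strictly above $\sup_{s\in\sigma(x)}Df_{s}(x;y)$. Take $C=[0,1]$, $x=0$, $y=1$, $S=\{0\}\cup\mathbb{N}$, $f(0,t)=0$ and $f(n,t)=-n^{-2}+(1+n^{-1})t$. Every section is affine, so each $f_{s}$ is a-differentiable and (\ref{unif}) holds trivially; $\sigma$ is viable, with $\sigma(0)=\{0\}$ and, for $t>0$, $\sigma(t)$ consisting of indices $n$ near $2/t$ (the supremum is a maximum because $f(n,t)\rightarrow t$ while finitely many terms exceed $t$). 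Every $s_{n}\in\sigma(t_{n})$ then has difference quotient $1+s_{n}^{-1}>1$, while $\sup_{s\in\sigma(0)}Df_{s}(0;1)=0$: condition (ii) fails, and indeed $Dv(0;1)=1\neq0$, so your envelope $W$ satisfies $W_{+}^{\prime}(0)=1>M=0$. In particular the ``non-attainment contradiction'' you hope for never materializes --- the supremum defining $v(x_{t})$ is attained at every $t$, just by a different, drifting $s$. What is genuinely needed to validate condition (ii) is an additional hypothesis in the $s$-variable (compactness of $S$ together with outer semicontinuity of $\sigma$ at $x$ and continuity of $s\mapsto Df_{s}(x;y)$, as is in fact verified by hand in Proposition \ref{prop:devrob}); neither your write-up nor the paper's proof supplies it.
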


\noindent\textbf{Proof} Using (\ref{unif}), we can apply the mean value theorem
(Theorem \ref{cor:mean}) to the difference $f(s_{n},x_{t_{n}})-f(s_{n},x)\equiv$ $f_{s_{n}%
}\left(  x_{t_{n}}\right)  -f_{s_{n}}\left(  x\right)  $. It holds%
\[
f_{s_{n}}\left(  x_{t_{n}}\right)  -f_{s_{n}}\left(  x\right)  =\left\langle
t_{n}(y-x),\nabla_{a}f_{s_{n}}\left(  x_{\tau_{n}t_{n}}\right)  \right\rangle
\]
for a certain scalar sequence $\tau_{n}\in\left[  0,1\right]  .$
Consequently,
\[
\frac{f(s_{n},x_{t_{n}})-f(s_{n},x)}{t_{n}}=\left\langle y-x,\nabla
_{a}f_{s_{n}}\left(  x_{\tau_{n}t_{n}}\right)  \right\rangle
\]
We can write
\begin{align*}
\left\langle y-x,\nabla_{a}f_{s_{n}}\left(  x_{\tau_{n}t_{n}}\right)
\right\rangle  &  =\left\langle y-x,\nabla_{a}f_{s_{n}}\left(  x\right)
\right\rangle +\left\langle y-x,\nabla_{a}f_{s_{n}}\left(  x_{\tau_{n}t_{n}%
}\right)  -\nabla_{a}f_{s_{n}}\left(  x\right)  \right\rangle \\
&  =Df_{s_{n}}\left(  x;y\right)  +\left\langle y-x,\nabla_{a}f_{s_{n}}\left(
x_{\tau_{n}t_{n}}\right)  -\nabla_{a}f_{s_{n}}\left(  x\right)  \right\rangle
\\
&  \leq\sup_{s\in\sigma\left(  x\right)  }Df_{s}(x;y)+\left\langle
y-x,\nabla_{a}f_{s_{n}}\left(  x_{\tau_{n}t_{n}}\right)  -\nabla_{a}f_{s_{n}%
}\left(  x\right)  \right\rangle
\end{align*}
Therefore
\[
\limsup_{n\rightarrow\infty}\frac{f(s_{n},x_{t_{n}})-f(s_{n},x)}{t_{n}}%
\leq\sup_{s\in\sigma\left(  x\right)  }Df_{s}(x;y)+\limsup_{n\rightarrow
\infty}\left\langle y-x,\nabla_{a}f_{s_{n}}\left(  x_{\tau_{n}t_{n}}\right)
-\nabla_{a}f_{s_{n}}\left(  x\right)  \right\rangle
\]
If the last term vanishes as $n\rightarrow\infty$, we are done since the
inequality (\ref{eq:FFF}) obtains. On the other hand, using (\ref{unif}),
for an arbitrary $\varepsilon>0$ we have
\[
\left\vert \left\langle y-x,\nabla_{a}f_{s}\left(  x_{\tau_{n}t_{n}}\right)
-\nabla_{a}f_{s}\left(  x\right)  \right\rangle \right\vert \leq\varepsilon
\]
for $n\geq n_{0}$, for some $n_{0}$ and for all $s\in S$. In particular,
$\left\vert \left\langle y-x,\nabla_{a}f_{s_{n}}\left(  x_{\tau_{n}t_{n}%
}\right)  -\nabla_{a}f_{s_{n}}\left(  x\right)  \right\rangle \right\vert
\leq\varepsilon$ for $n\geq n_{0}$ and so $\lim_{n\rightarrow\infty
}\left\langle y-x,\nabla_{a}f_{s_{n}}\left(  x_{\tau_{n}t_{n}}\right)
-\nabla_{a}f_{s_{n}}\left(  x\right)  \right\rangle =0.$\hfill$\blacksquare$

\bigskip

The second one assumes the space $S$ to be a finite set of indices $I$ and
thus $v$ is the maximum of finitely many functions, that is, $v\left(
x\right)  =\max_{i\in I}f_{i}\left(  x\right)  $. \ Here condition (ii) is
always satisfied.

\begin{proposition}
If the affine directional derivatives $Df_{i}(x;y)$ exist for all $i\in
\sigma\left(  x\right)  \subseteq I$, then the affine directional derivative
$Dv(x;y)$ exists and $Dv(x;y)=\max_{i\in\sigma\left(  x\right)  }Df_{i}(x;y)$.
\end{proposition}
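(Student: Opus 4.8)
The plan is to derive the statement from Theorem \ref{thm:danskin} by verifying, in the finite case $S=I$, that its two hypotheses hold for the given pair $x,y\in C$. Condition (i) is precisely the assumption that $Df_i(x;y)$ exists for each $i\in\sigma(x)$, so nothing is needed there. Viability is automatic: since $v(x)=\max_{i\in I}f_i(x)$ is a maximum over a finite set, $\sigma(x)\neq\emptyset$ for every $x$. The entire burden therefore falls on condition (ii), namely that along any sequence $t_n\downarrow 0$ one may select maximizers $s_n\in\sigma(x_{t_n})$ with $\limsup_n t_n^{-1}[f(s_n,x_{t_n})-f(s_n,x)]\leq\sup_{s\in\sigma(x)}Df_s(x;y)$. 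Once (ii) is in place, the theorem gives $Dv(x;y)=\sup_{s\in\sigma(x)}Df_s(x;y)$, and as $\sigma(x)\subseteq I$ is finite and nonempty this supremum is attained, yielding the asserted identity $Dv(x;y)=\max_{i\in\sigma(x)}Df_i(x;y)$.

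To verify (ii), fix an arbitrary sequence $t_n\downarrow 0$ and choose $s_n=i_n\in\sigma(x_{t_n})$; I claim that \emph{any} such choice works. Writing $q_n=t_n^{-1}[f_{i_n}(x_{t_n})-f_{i_n}(x)]$, I would first pass to a subsequence along which $q_n\to\limsup_n q_n$, and then, exploiting the finiteness of $I$, extract a further subsequence along which the maximizing index is constant, say $i_n\equiv i^{\ast}$. Along this sub-subsequence one has $f_{i^{\ast}}(x_{t_n})=v(x_{t_n})$ for every $n$. The finiteness of $I$ is exactly what permits this reduction to a single limiting index $i^{\ast}$, and it remains only to locate $i^{\ast}$ and to evaluate the associated difference quotients.

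The crux is to show that this limiting active index $i^{\ast}$ actually lies in $\sigma(x)$; only then does $Df_{i^{\ast}}(x;y)$ exist (by hypothesis), so that the sub-subsequential difference quotient $t_n^{-1}[f_{i^{\ast}}(x_{t_n})-f_{i^{\ast}}(x)]$ converges, by the very definition of the affine directional derivative as a one-sided limit over $t\downarrow 0$, to $Df_{i^{\ast}}(x;y)\leq\sup_{s\in\sigma(x)}Df_s(x;y)$, giving $\limsup_n q_n\leq\sup_{s\in\sigma(x)}Df_s(x;y)$ and hence (ii). This identification is where finiteness must be combined with continuity along the segment: since each section $f_i$ is wa-differentiable, Lemma \ref{lmm:ccc} makes $t\mapsto f_i(x_t)$ differentiable, hence continuous, on $[0,1]$, so that $t\mapsto v(x_t)=\max_{i\in I}f_i(x_t)$ is continuous as a maximum of finitely many continuous functions. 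Letting $n\to\infty$ in the equality $f_{i^{\ast}}(x_{t_n})=v(x_{t_n})$, with $x_{t_n}\to x$ along the segment, then forces $f_{i^{\ast}}(x)=v(x)$, i.e. $i^{\ast}\in\sigma(x)$. I expect this identification of the limiting index to be the only delicate point of the argument: absent continuity of the sections along the segment, an index could maximize at points $x_{t_n}\to x$ while failing to maximize at $x$, whereupon $v$ itself becomes discontinuous at $x$ and $Dv(x;y)$ may cease to exist — so it is precisely the interplay of finiteness (which isolates one index) with continuity (which pins that index inside $\sigma(x)$) that makes condition (ii) automatic here.
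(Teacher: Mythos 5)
Your argument follows the same skeleton as the paper's proof: both reduce the claim to condition (ii) of Theorem \ref{thm:danskin} and use the finiteness of $I$ to pass to a subsequence realizing the $\limsup$ along which the maximizing index is a constant $i^{\ast}$. The difference lies at the final step. The paper simply asserts $\lim_{k}t_{n_{k}}^{-1}[f(i^{\ast},x_{t_{n_{k}}})-f(i^{\ast},x)]=Df_{i^{\ast}}(x;y)\leq\max_{i\in\sigma(x)}Df_{i}(x;y)$, which tacitly presumes $i^{\ast}\in\sigma(x)$ --- otherwise the stated hypotheses do not even guarantee that $Df_{i^{\ast}}(x;y)$ exists, let alone the inequality. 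You correctly isolate this as the delicate point and close it: continuity of each $t\mapsto f_{i}(x_{t})$ makes $t\mapsto v(x_{t})$ continuous, and passing to the limit in $f_{i^{\ast}}(x_{t_{n}})=v(x_{t_{n}})$ yields $f_{i^{\ast}}(x)=v(x)$, i.e.\ $i^{\ast}\in\sigma(x)$. Be aware, though, that to obtain that continuity you invoke Lemma \ref{lmm:ccc} and hence wa-differentiability of \emph{every} section $f_{i}$, an assumption not contained in the proposition's hypotheses, which only posit existence of $Df_{i}(x;y)$ for $i\in\sigma(x)$. Some such extra assumption is genuinely needed: with $C=[0,1]$, $x=0$, $y=1$, $f_{1}\equiv 0$, $f_{2}(0)=-1$ and $f_{2}(t)=1$ for $t>0$, one has $\sigma(0)=\{1\}$ and $Df_{1}(0;1)=0$, yet $v(t)=1$ for $t>0$ and $Dv(0;1)$ fails to exist; so your added regularity is repairing a real gap in the statement and in the paper's own proof, not just a cosmetic one. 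A cheaper fix, closer to the given hypotheses, is to assume $Df_{i}(x;y)$ exists for all $i\in I$: existence of the one-sided derivative already gives $f_{i}(x_{t_{n}})\rightarrow f_{i}(x)$, which is all that the identification of $i^{\ast}$ requires.
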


\noindent\textbf{Proof} Let $\left\{  t_{n}\right\}  $ and $i_{n}\in
\sigma\left(  x_{t_{n}}\right)  \in I$ be two sequences as claimed in the
condition (ii) of Theorem \ref{thm:danskin}. There exist subsequences
$\left\{  t_{n_{k}}\right\}  $ and $\left\{  i_{n_{k}}\right\}  $ for which
\[
\limsup_{n\rightarrow\infty}\frac{f(i_{n},x_{t_{n}})-f(i_{n},x)}{t_{n}}%
=\lim_{k\rightarrow\infty}\frac{f(i_{n_{k}},x_{t_{n_{k}}})-f(i_{n_{k}}%
,x)}{t_{n_{k}}}%
\]

As $I$ is finite, the sequence $i_{n_{k}}$ is eventually constant and equal to
a point $i^{\ast}\in I$. Consequently, the last limit can be replaced by%
\[
\lim_{k\rightarrow\infty}\frac{f(i^{\ast},x_{t_{n_{k}}})-f(i^{\ast}%
,x)}{t_{n_{k}}}=Df_{i^{\ast}}(x)\leq\max_{i\in\sigma\left(  x\right)  }%
Df_{i}(x;y)
\]
\hfill$\blacksquare$

\section{Examples of affinely differentiable functionals in Economics and
Statistics\label{ch3}}

\subsection{Influence curve\label{sect:inf}}

Let $T:\Delta\left(  Y\right)  \rightarrow\mathbb{R}$ be wa-differentiable.
Using the identification $x\mapsto\delta_{x}$, based on Hampel \cite{hamp} we
define the \emph{influence function} $\mathcal{I}\left(  \cdot;T,\mu\right)
:Y\rightarrow\mathbb{R}$ by%
\[
\mathcal{I}\left(  x;T,\mu\right)  =DT\left(  \mu;\delta_{x}\right)
=\lim_{t\downarrow0}\frac{T\left(  \left(  1-t\right)  \mu+t\delta_{x}\right)
-T\left(  \mu\right)  }{t}%
\]
The name originated in developing robust statistics. It measures the change of
the value $T\left(  \mu\right)  $ when an infinitesimally small part of $\mu$
is replaced by a point-mass at $x.$

When $T$ is a-differentiable,%
\[
\mathcal{I}\left(  x;T,\mu\right)  =\int u_{\mu}\mathrm{d}\left(  \delta
_{x}-\mu\right)  =u_{\mu}\left(  x\right)  -\int u_{\mu}d\mu
\]
where $u_{\mu}\in C_{b}\left(  Y\right)  $ is an a-gradient, i.e., $u_{\mu}%
\in\left[  \nabla_{a}T\left(  \mu\right)  \right]  $ (see Example
\ref{ex:lop}). Since the gradient $u_{\mu}$ is unique up to an additive
constant, under the normalization $\int u_{\mu}\mathrm{d}\mu=0$ we get%
\begin{equation}
\mathcal{I}\left(  \cdot;T,\mu\right)  =u_{\mu}\left(  \cdot\right)  \in
C_{b}\left(  Y\right)  \label{eq:lala}%
\end{equation}
The influence function thus completely pins down the a-differential of the
functional $T$ as $\mu$ varies. Conversely, the influence function agrees with
the gradient under the normalization condition $\int u_{\mu}\mathrm{d}\mu=0$.

\begin{example}
\label{diniexample}\emph{Let} $\mathcal{D}\left[  0,1\right]  $ \emph{be the
space of the probability distributions on the interval} $\left[  0,1\right]
$. \emph{The affine functional} $T:\mathcal{D}\left[  0,1\right]
\rightarrow\mathbb{R}$ \emph{defined by }$T\left(  F\right)  =F\left(
x_{0}\right)  $,\emph{ with }$x_{0}\in\left(  0,1\right)  $,\emph{ is clearly
wa-differentiable and }$DT\left(  F;G\right)  =G\left(  x_{0}\right)
-F\left(  x_{0}\right)  $\emph{. The influence function is given by}%
\[
\mathcal{I}\left(  x;T,F\right)  =G_{x}\left(  x_{0}\right)  -F\left(
x_{0}\right)
\]
\emph{where }$G_{x}$\emph{ is the distribution function of the Dirac measure
}$\delta_{x}$\emph{. Clearly, it is discontinuous at} $x_{0}$\emph{. By
(\ref{eq:lala}), it should coincide with the continuous function }$u_{F}%
$\emph{. Hence, }$T$ \emph{is not a-differentiable at any distribution}
$F\in\mathcal{D}\left[  0,1\right]  $\emph{.}\hfill$\blacktriangle$
\end{example}

Thus, the existence of the influence function does not ensure
a-differentiability. Next we show that this is the case even when the
influence function is continuous (see\ also Example 2.2.3 of \cite{fern}).

\begin{example}
\emph{Define the convex functional }$T:\mathcal{D}\left[  0,1\right]
\rightarrow\mathbb{R}$ \emph{by}%
\begin{equation}
T\left(  F\right)  =\sum_{x\in\left[  0,1\right]  }\left[  F\left(  x\right)
-F\left(  x-\right)  \right]  ^{\alpha} \label{eq:did}%
\end{equation}
\emph{with }$\alpha>1$\emph{. This functional measures the jumps of the
distribution }$F$\emph{. This series is well defined since there are at most
countably many jumps. Therefore, ,}
\[
\sum_{x\in\left[  0,1\right]  }\left[  F\left(  x\right)  -F\left(  x-\right)
\right]  ^{\alpha}\leq\sum_{x\in\left[  0,1\right]  }\left[  F\left(
x\right)  -F\left(  x-\right)  \right]  \leq1
\]
\emph{and} \emph{the sum is finite.} \emph{It is easy to see that }$T$\emph{
is wa-differentiable and its wa-derivative is}%
\[
DT\left(  F;G_{x}\right)  =-\alpha T\left(  F\right)  +\alpha\left(  F\left(
x\right)  -F\left(  x-\right)  \right)  ^{\alpha-1}%
\]
\emph{If the distribution }$F$ \emph{is continuous, then }$DT\left(
F;G_{x}\right)  =0$. \emph{Hence the gradient should vanish everywhere and, by
Theorem \ref{cor:mean}}, $F$\emph{ would be constant, a contradiction.
Consequently, }$T$ \emph{is not a-differentiable at }$F$\emph{.}
\hfill$\blacktriangle$
\end{example}

\subsection{Multi-utility representations}

Let us interpret an element $\mu\in\Delta\left(  Y\right)  $ as a lottery with
prizes in a metric space $Y$. A decision maker (DM) has a preference (binary)
relation $\succsim$ over $\Delta\left(  Y\right)  $ represented by a utility
function $U:\Delta\left(  Y\right)  \rightarrow\mathbb{R}$. Namely,%
\[
\mu\succsim\lambda\Longleftrightarrow U\left(  \mu\right)  \geq U\left(
\lambda\right)
\]
We introduce the auxiliary subrelation $\succsim^{\ast}$ defined by%
\[
\mu\succsim^{\ast}\lambda\Longleftrightarrow\alpha\mu+\left(  1-\alpha\right)
\nu\succsim\alpha\lambda+\left(  1-\alpha\right)  \nu
\]
$\text{for all }\alpha\in\left[  0,1\right]  \text{ and all }\nu\in
\Delta\left(  Y\right)  $. It captures the comparison over which the DM feels
sure (see \cite{gmm} and \cite{cerdilor}).

In the next theorem $\nabla_{a}U\left(  \mu\right)  $ is understood to be a
normalized gradient. For instance, $\nabla_{a}U\left(  \mu\right)  =u_{\mu}$
with $\int u_{\mu}\mathrm{d}\mu=0$, that is, the influence function associated
with the probability measure $\mu$. Moreover, denote by $\operatorname{Im}%
\nabla_{a}U\subseteq C_{b}\left(  Y\right)  $ the image $\left\{  \nabla
_{a}U\left(  \lambda\right)  :\lambda\in\Delta\left(  Y\right)  \right\}  $.

\begin{theorem}
\label{th:jop}If $\succsim$ is a preference relation with an a-differentiable
utility function $U$, then
\[
\mu\succsim^{\ast}\lambda\Longleftrightarrow\int u\mathrm{d}\mu\geq\int
u\mathrm{d}\lambda\qquad\forall u\in\operatorname{Im}\nabla_{a}U
\]

\end{theorem}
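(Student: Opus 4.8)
The plan is to prove the two implications separately, using the first-order directional-derivative characterization for the forward direction and the mean value theorem (Theorem \ref{cor:mean}) for the converse. Throughout I identify the a-gradient $\nabla_{a}U\left(  \nu\right)  $ with a function $u_{\nu}\in C_{b}\left(  Y\right)  $ as in Example \ref{ex:lop}, so that $DU\left(  \nu;\xi\right)  =\int u_{\nu}\mathrm{d}\left(  \xi-\nu\right)  $ for all $\xi\in\Delta\left(  Y\right)  $. I note at the outset that since $\mu$ and $\lambda$ are probability measures, every pairing $\int u\,\mathrm{d}\left(  \mu-\lambda\right)  $ is insensitive to the additive-constant ambiguity in $u_{\nu}$, so the choice of representative (normalized or not) is immaterial to the stated condition.

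For the forward implication, I would suppose $\mu\succsim^{\ast}\lambda$ and fix an arbitrary $u=\nabla_{a}U\left(  \nu\right)  \in\operatorname{Im}\nabla_{a}U$. The key maneuver is to specialize the definition of $\succsim^{\ast}$ by taking the mixing measure to be $\nu$ itself: for every $\alpha\in\left[  0,1\right]  $,
\[
U\bigl(\left(  1-\alpha\right)  \nu+\alpha\mu\bigr)\geq U\bigl(\left(
1-\alpha\right)  \nu+\alpha\lambda\bigr).
\]
Both sides equal $U\left(  \nu\right)  $ at $\alpha=0$, so subtracting $U\left(  \nu\right)  $, dividing by $\alpha>0$, and letting $\alpha\downarrow0$ turns this into $DU\left(  \nu;\mu\right)  \geq DU\left(  \nu;\lambda\right)  $, the limits existing because $U$ is a-differentiable. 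Expressing both directional derivatives through $u_{\nu}$ and cancelling the common $\int u_{\nu}\mathrm{d}\nu$ term yields exactly $\int u_{\nu}\mathrm{d}\mu\geq\int u_{\nu}\mathrm{d}\lambda$, which is the desired inequality for this $u$.

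For the converse, I would assume $\int u\,\mathrm{d}\mu\geq\int u\,\mathrm{d}\lambda$ for every $u\in\operatorname{Im}\nabla_{a}U$ and fix $\alpha\in\left[  0,1\right]  $ and $\nu\in\Delta\left(  Y\right)  $. Setting $x=\left(  1-\alpha\right)  \nu+\alpha\lambda$ and $y=\left(  1-\alpha\right)  \nu+\alpha\mu$, so that $y-x=\alpha\left(  \mu-\lambda\right)  $, the mean value theorem in the a-differentiable form (\ref{eq:AAG}) furnishes some $t\in\left(  0,1\right)  $ with
\[
U\left(  y\right)  -U\left(  x\right)  =\left\langle y-x,\nabla_{a}U\left(
x_{t}\right)  \right\rangle =\alpha\int u_{x_{t}}\mathrm{d}\left(  \mu
-\lambda\right)  ,
\]
where $x_{t}=\left(  1-t\right)  x+ty\in\Delta\left(  Y\right)  $ by convexity. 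The crucial observation is that $u_{x_{t}}=\nabla_{a}U\left(  x_{t}\right)  $ therefore lies in $\operatorname{Im}\nabla_{a}U$, so the standing hypothesis gives $\int u_{x_{t}}\mathrm{d}\left(  \mu-\lambda\right)  \geq0$; since $\alpha\geq0$, we conclude $U\left(  y\right)  \geq U\left(  x\right)  $. As $\alpha$ and $\nu$ were arbitrary, this is precisely $\mu\succsim^{\ast}\lambda$.

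The obstacle here is conceptual rather than computational: one must recognize, in the forward direction, that the mixing measure in the definition of $\succsim^{\ast}$ should be chosen to coincide with the very point $\nu$ at which the gradient $u$ is evaluated, and dually, in the converse, that the intermediate point $x_{t}$ delivered by the mean value theorem automatically belongs to $\Delta\left(  Y\right)  $ and hence contributes a gradient already subject to the hypothesis. Both steps rely on the convexity of $\Delta\left(  Y\right)  $ to keep all relevant mixtures admissible as base points, and on the representation $DU\left(  \nu;\cdot\right)  =\int u_{\nu}\mathrm{d}\left(  \cdot-\nu\right)  $ from Example \ref{ex:lop}.
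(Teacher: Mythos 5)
Your proof is correct and follows essentially the same route as the paper's: one direction by specializing the definition of $\succsim^{\ast}$ to the base point $\nu$, forming the difference quotient and letting the mixture weight tend to zero, and the other by applying the mean value theorem (Theorem \ref{cor:mean}) to the two mixtures and observing that the intermediate point's gradient already lies in $\operatorname{Im}\nabla_{a}U$. The only differences are the order in which the two implications are presented and your (correct) side remark that the pairing against $\mu-\lambda$ is insensitive to the additive-constant ambiguity of the gradient.
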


In risk theory, since Machina \cite{mach}, the functions $u\in
\operatorname{Im}\nabla_{a}U$ are known as \emph{local utilities}.\emph{
}Theorem \ref{th:jop} formalizes the idea that, individually, each local
utility models a local expected utility behavior of $\succsim$, but jointly
all local utilities characterize a global expected utility feature of
$\succsim$.

\bigskip

\noindent\textbf{Proof} Define $\hat{\succsim}$ by%
\[
\mu\,\hat{\succsim}\,\lambda\Longleftrightarrow\int u\mathrm{d}\mu\geq\int
u\mathrm{d}\lambda\qquad\forall u\in\operatorname{Im}\nabla_{a}U
\]
Assume that $\mu\,\hat{\succsim}\,\lambda$. Let $\nu\in\Delta\left(  Y\right)
$ and $\alpha\in\left(  0,1\right)  $. By the mean value theorem (Theorem
\ref{cor:mean}),
\[
d=U\left(  \alpha\mu+\left(  1-\alpha\right)  \nu\right)  -U\left(
\alpha\lambda+\left(  1-\alpha\right)  \nu\right)  =\alpha\left\langle
\nabla_{a}U\left(  \zeta\right)  ,\mu-\lambda\right\rangle
\]
where $\zeta=\alpha\left[  \left(  1-t\right)  \mu+t\lambda\right]  +\left(
1-\alpha\right)  \nu$ for some $t\in\left(  0,1\right)  $. Since $\mu
\,\hat{\succsim}\,\lambda$, by setting $u_{\zeta}=\nabla_{a}U\left(
\zeta\right)  $ we have%
\[
d=\alpha\left[  \int u_{\zeta}\mathrm{d}\mu-\int u_{\zeta}\mathrm{d}%
\lambda\right]  \geq0
\]
that is, $\mu\succsim^{\ast}\lambda$. Conversely, assume $\mu\succsim^{\ast
}\lambda$ and pick any element $\nu\in\Delta\left(  Y\right)  $. It follows
that, for each $t\in\left[  0,1\right]  $,%
\[
\left(  1-t\right)  \nu+t\mu\succsim\left(  1-t\right)  \nu+t\lambda
\]
Hence, for $t>0$,
\[
\frac{U\left(  \left(  1-t\right)  \nu+t\mu\right)  -U\left(  \nu\right)  }%
{t}\geq\frac{U\left(  \left(  1-t\right)  \nu+t\lambda\right)  -U\left(
\nu\right)  }{t}%
\]
Letting $t\downarrow0$, we get $\left\langle \nabla_{a}U\left(  \nu\right)
,\mu-\nu\right\rangle \geq\left\langle \nabla_{a}U\left(  \nu\right)
,\lambda-\nu\right\rangle $. Namely, $\left\langle \nabla_{a}U\left(
\nu\right)  ,\mu\right\rangle \geq\left\langle \nabla_{a}U\left(  \nu\right)
,\lambda\right\rangle $. That is, $\mu\,\hat{\succsim}\,\lambda$ as
desired.\hfill$\blacksquare$

\bigskip

The next example is based on \cite{chew}.

\begin{example}
\label{ex:epstein}\emph{Define the utility function }$U:\Delta\left(
Y\right)  \rightarrow\mathbb{R}$ \emph{by the quadratic functional}
\[
U\left(  \mu\right)  =\int\psi\left(  x,y\right)  \mathrm{d}\mu\left(
x\right)  \otimes\mathrm{d}\mu\left(  y\right)
\]
\emph{where }$\psi\left(  x,y\right)  $\emph{ is a symmetric, continuous and
bounded function on }$Y\times Y$\emph{. In view of (\ref{eq:quad}) }%
\[
DU\left(  \mu;\lambda\right)  =2\int\psi\left(  x,y\right)  \mathrm{d}%
\mu\left(  x\right)  \otimes\mathrm{d}\lambda\left(  y\right)  -2U\left(
\mu\right)
\]
\emph{and }$U$ \emph{is a-differentiable with gradient }
\[
\nabla_{a}U\left(  \mu\right)  =u_{\mu}=2\int\psi\left(  \cdot,y\right)
\mathrm{d}\mu\left(  y\right)
\]
\emph{up to a constant. Therefore,}%
\[
\operatorname{Im}\nabla_{a}U=\left\{  2\int\psi\left(  \cdot,y\right)
\mathrm{d}\nu\left(  y\right)  :\nu\in\Delta\left(  X\right)  \right\}
\]
\emph{By} \emph{Theorem \ref{th:jop},}%
\[
\mu\succsim^{\ast}\lambda\Longleftrightarrow\int\psi\left(  x,y\right)
\mathrm{d}\mu\left(  x\right)  \otimes\mathrm{d}\nu\left(  y\right)  \geq
\int\psi\left(  x,y\right)  \mathrm{d}\lambda\left(  x\right)  \otimes
\mathrm{d}\nu\left(  y\right)
\]
\emph{for all} $\nu\in\Delta\left(  Y\right)  $\emph{. Clearly,} \emph{this is
equivalent to}
\[
\mu\succsim^{\ast}\lambda\Longleftrightarrow\int\psi\left(  x,y\right)
\mathrm{d}\mu\left(  x\right)  \geq\int\psi\left(  x,y\right)  \mathrm{d}%
\lambda\left(  x\right)  \qquad\forall y\in X
\]
\hfill$\blacktriangle$
\end{example}

\subsection{Prospect theory}

To each monetary lottery $\mu\in\Delta\left(  \mathbb{R}\right)  $ we
associate its distribution function $F_{\mu}\left(  x\right)  =\mu\left(
\left(  -\infty,x\right]  \right)  $. Fix a nonatomic positive Borel measure
$\rho$ on the real line and consider the utility function $U:\Delta\left(
\mathbb{R}\right)  \rightarrow\mathbb{R}$ given by%
\begin{equation}
U\left(  \mu\right)  =\int_{\left[  0,\infty\right]  }w_{+}\left(  1-F_{\mu
}\left(  x\right)  \right)  \mathrm{d}\rho\left(  x\right)  -\int_{\left[
-\infty,0\right]  }w_{-}\left(  F_{\mu}\left(  x\right)  \right)
\mathrm{d}\rho\left(  x\right)  \label{eq:baba}%
\end{equation}
where $w_{+},w_{-}:\left[  0,1\right]  \rightarrow\left[  0,1\right]  $ are
two strictly increasing and continuously differentiable maps. Instances of
this utility function appear in the Prospect Theory of Tverski and Kahneman
\cite{kantv} (see Wakker \cite{wakker}). The utility function $U$ is
wa-differentiable:\footnote{Continuous differentiability of $w_{+}$ and
$w_{-}$ implies their Lipschitzianity, so we can apply the Dominated
Convergence Theorem.}%
\begin{align*}
&  DU\left(  \mu;\lambda\right)  =\lim_{t\downarrow0}\frac{U\left(
\mu+t\left(  \lambda-\mu\right)  \right)  -U\left(  \mu\right)  }{t}\\
&  =-\int_{\left[  0,+\infty\right]  }w_{+}^{\prime}\left(  1-F_{\mu}\left(
x\right)  \right)  \left(  F_{\lambda}-F_{\mu}\right)  \mathrm{d}\rho\left(
x\right)  -\int_{\left[  -\infty,0\right]  }w_{-}^{\prime}\left(  F_{\mu
}\left(  x\right)  \right)  \left(  F_{\lambda}-F_{\mu}\right)  \mathrm{d}%
\rho\left(  x\right) \\
&  =\int_{\mathbb{R}}\varphi_{\mu}\left(  x\right)  \left(  F_{\mu}\left(
x\right)  -F_{\lambda}\left(  x\right)  \right)  \mathrm{d}\rho\left(
x\right)
\end{align*}
where $\varphi$ is the bounded scalar function%
\[
\varphi_{\mu}\left(  x\right)  =\left\{
\begin{tabular}
[c]{ll}%
$\medskip w_{+}^{\prime}\left(  1-F_{\mu}\left(  x\right)  \right)  $ & if
$x\geq0$\\
$w_{-}^{\prime}\left(  F_{\mu}\left(  x\right)  \right)  $ & $\text{otherwise}%
$%
\end{tabular}
\right.
\]
It is not immediately obvious whether $DU\left(  \mu;\cdot\right)  $ is extendable or not.
To prove this we must use integration by parts for Riemann-Stieltjes
integrals. Set $\mathrm{d}\eta=\varphi_{\mu}\mathrm{d}\rho$ and integrate
first on a finite interval. By the Dominated Convergence Theorem,%
\[
\int_{\mathbb{R}}\varphi_{\mu}\left(  x\right)  \left(  F_{\mu}\left(
x\right)  -F_{\lambda}\left(  x\right)  \right)  \mathrm{d}\rho\left(
x\right)  =\lim_{R\rightarrow+\infty}\int_{\left[  -R,R\right]  }\varphi_{\mu
}\left(  x\right)  \left(  F_{\mu}\left(  x\right)  -F_{\lambda}\left(
x\right)  \right)  \mathrm{d}\rho\left(  x\right)
\]
Hence,%
\[
\int_{\left[  -R,R\right]  }\varphi_{\mu}\left(  x\right)  \left(  F_{\mu
}\left(  x\right)  -F_{\lambda}\left(  x\right)  \right)  \mathrm{d}%
\rho\left(  x\right)  =\int_{\left[  -R,R\right]  }\left(  F_{\mu}\left(
x\right)  -F_{\lambda}\left(  x\right)  \right)  \mathrm{d}\eta\left(
x\right)  =\int_{-R}^{R}\left(  F_{\mu}\left(  x\right)  -F_{\lambda}\left(
x\right)  \right)  \mathrm{d}\Phi_{\mu}\left(  x\right)
\]
where the last integral is Riemann-Stieltjes, and the function $\Phi_{\mu
}:\mathbb{R}\rightarrow\mathbb{R}$ defined by
\begin{equation}
\Phi_{\mu}\left(  x\right)  =\int_{\left[  -\infty,x\right]  }\varphi_{\mu
}\left(  t\right)  \mathrm{d}\rho\left(  t\right)  \label{eq:form}%
\end{equation}
is bounded and absolutely continuous. By the integration-by-parts formula,%
\begin{align*}
&  \int_{-R}^{R}\left(  F_{\mu}\left(  x\right)  -F_{\lambda}\left(  x\right)
\right)  \mathrm{d}\Phi_{\mu}\left(  x\right)  =\left[  \left(  F_{\mu}\left(
x\right)  -F_{\lambda}\left(  x\right)  \right)  \Phi_{\mu}\left(  x\right)
\right]  _{-R}^{R}-\int_{-R}^{R}\Phi_{\mu}\left(  x\right)  \mathrm{d}\left(
F_{\mu}-F_{\lambda}\right) \\
&  =\left[  \left(  F_{\lambda}\left(  x\right)  -F_{\mu}\left(  x\right)
\right)  \Phi_{\mu}\left(  x\right)  \right]  _{-R}^{R}+\int_{\left[
-R,R\right]  }\Phi_{\mu}\left(  x\right)  \mathrm{d}\left(  \lambda
-\mu\right)
\end{align*}
Letting $R\rightarrow\infty$, we finally get%
\[
DV\left(  \mu;\lambda\right)  =\int_{\mathbb{R}}\Phi_{\mu}\left(  x\right)
\mathrm{d}\left(  \lambda-\mu\right)  =\left\langle \Phi_{\mu},\lambda
-\mu\right\rangle
\]
since $\left[  \left(  F_{\lambda}\left(  x\right)  -F_{\mu}\left(  x\right)
\right)  \Phi_{\mu}\left(  x\right)  \right]  _{-R}^{R}\rightarrow0$ as
$\Phi_{\mu}$ is bounded.

In sum, $U$ is a-differentiable and its gradient is
\[
\nabla_{a}U\left(  \mu\right)  =\Phi_{\mu}\in C_{b}\left(  \mathbb{R}\right)
\text{ }%
\]
Let $\succsim$ be a preference relation on $\Delta\left(  \mathbb{R}\right)  $
represented by $U$. For its subrelation $\succsim^{\ast}$ we thus have the
following consequence of Theorem \ref{th:jop},

\begin{proposition}
Let $\mu,\lambda\in\Delta\left(  \mathbb{R}\right)  $. It holds, for each
$\nu\in\Delta\left(  \mathbb{R}\right)  $,
\begin{equation}
\mu\succsim^{\ast}\lambda\Longleftrightarrow\int_{\mathbb{R}}\varphi_{\nu
}\left(  x\right)  \left(  1-F_{\mu}\left(  x\right)  \right)  \mathrm{d}%
\rho\left(  x\right)  \geq\int_{\mathbb{R}}\varphi_{\nu}\left(  x\right)
\left(  1-F_{\lambda}\left(  x\right)  \right)  \mathrm{d}\rho\left(
x\right)  \label{eq:gaga}%
\end{equation}

\end{proposition}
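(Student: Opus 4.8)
The plan is to obtain (\ref{eq:gaga}) as a direct corollary of Theorem \ref{th:jop}, since the preceding computation has already shown that the utility $U$ of (\ref{eq:baba}) is a-differentiable with gradient $\nabla_{a}U\left(\nu\right)=\Phi_{\nu}\in C_{b}\left(\mathbb{R}\right)$, where $\Phi_{\nu}$ is given by (\ref{eq:form}). Thus $\operatorname{Im}\nabla_{a}U=\left\{\Phi_{\nu}:\nu\in\Delta\left(\mathbb{R}\right)\right\}$, and Theorem \ref{th:jop} immediately yields
\[
\mu\succsim^{\ast}\lambda\Longleftrightarrow\int_{\mathbb{R}}\Phi_{\nu}\,\mathrm{d}\mu\geq\int_{\mathbb{R}}\Phi_{\nu}\,\mathrm{d}\lambda\qquad\forall\nu\in\Delta\left(\mathbb{R}\right).
\]
All that remains is to rewrite $\int\Phi_{\nu}\,\mathrm{d}\mu$ in the form displayed on the right-hand side of (\ref{eq:gaga}), and likewise for $\lambda$.

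To do this, I would unfold the definition $\Phi_{\nu}\left(x\right)=\int_{\left[-\infty,x\right]}\varphi_{\nu}\,\mathrm{d}\rho$ and interchange the order of integration. Tonelli's theorem applies cleanly because $\varphi_{\nu}\geq0$ (the weighting functions $w_{+},w_{-}$ are strictly increasing, so $\varphi_{\nu}=w_{\pm}^{\prime}$ is nonnegative and bounded), and both $\mu$ and $\rho$ are $\sigma$-finite. This gives
\[
\int_{\mathbb{R}}\Phi_{\nu}\,\mathrm{d}\mu=\int_{\mathbb{R}}\left(\int_{\left[-\infty,x\right]}\varphi_{\nu}\left(t\right)\,\mathrm{d}\rho\left(t\right)\right)\mathrm{d}\mu\left(x\right)=\int_{\mathbb{R}}\varphi_{\nu}\left(t\right)\,\mu\left(\left[t,\infty\right)\right)\,\mathrm{d}\rho\left(t\right),
\]
where in the inner integration over $\left\{x:x\geq t\right\}$ one recognizes $\mu\left(\left[t,\infty\right)\right)=1-F_{\mu}\left(t^{-}\right)$.

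The hard part, and the only genuinely nonroutine step, will be passing from the left limit $F_{\mu}\left(t^{-}\right)$ to $F_{\mu}\left(t\right)$ inside the $\rho$-integral, so as to match (\ref{eq:gaga}) exactly. This is precisely where the nonatomicity of $\rho$ is indispensable: the distribution function $F_{\mu}$ has at most countably many discontinuities, and a countable set is $\rho$-null since $\rho$ assigns zero mass to every singleton. Hence $F_{\mu}\left(t^{-}\right)=F_{\mu}\left(t\right)$ for $\rho$-almost every $t$, so that $\int_{\mathbb{R}}\Phi_{\nu}\,\mathrm{d}\mu=\int_{\mathbb{R}}\varphi_{\nu}\left(t\right)\left(1-F_{\mu}\left(t\right)\right)\mathrm{d}\rho\left(t\right)$. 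Running the identical computation for $\lambda$ and substituting both into the equivalence furnished by Theorem \ref{th:jop} reproduces (\ref{eq:gaga}) verbatim, which completes the argument.
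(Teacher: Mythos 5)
Your proposal is correct and follows essentially the same route as the paper: apply Theorem \ref{th:jop} with gradient $\Phi_{\nu}$, swap the order of integration (the paper invokes Fubini with the indicator identity $I_{\left[ -\infty,t\right] }\left( x\right) =I_{\left[ x,+\infty\right] }\left( t\right) $, which is your $\mu\left( \left[ t,\infty\right) \right) $ computation), and then use nonatomicity of $\rho$ to replace $F_{\mu}\left( x^{-}\right) $ by $F_{\mu}\left( x\right) $. Your explicit justification of the interchange via Tonelli (nonnegativity and boundedness of $\varphi_{\nu}$) is a small but welcome refinement of the paper's bare appeal to Fubini.
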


\noindent\textbf{Proof} By Theorem \ref{th:jop}, $\mu\succsim^{\ast}\lambda$
if and only if, for each $\nu\in\Delta\left(  \mathbb{R}\right)  $, $\int
\Phi_{v}\left(  t\right)  \mathrm{d}\mu\left(  t\right)  \geq\int\Phi
_{v}\left(  t\right)  \mathrm{d}\lambda\left(  t\right)  $. By Fubini's
Theorem,
\[
\int\Phi_{v}\left(  t\right)  \mathrm{d}\mu\left(  t\right)  =\int
\mathrm{d}\mu\left(  t\right)  \int I_{\left[  -\infty,t\right]  }\left(
x\right)  \varphi_{\nu}\left(  x\right)  \mathrm{d}\rho\left(  x\right)
=\int\mathrm{d}\rho\left(  x\right)  \varphi_{\nu}\left(  x\right)  \int
I_{\left[  x,+\infty\right]  }\left(  t\right)  \mathrm{d}\mu\left(  t\right)
\]
where we used the relation $I_{\left[  -\infty,t\right]  }\left(  x\right)
=I_{\left[  x,+\infty\right]  }\left(  t\right)  $. Hence,%
\[
\int\Phi_{v}\left(  t\right)  \mathrm{d}\mu\left(  t\right)  =\int\varphi
_{\nu}\left(  x\right)  \left(  1-F_{\mu}\left(  x-\right)  \right)
\mathrm{d}\rho\left(  x\right)  =\int\varphi_{\nu}\left(  x\right)  \left(
1-F_{\mu}\left(  x\right)  \right)  \mathrm{d}\rho\left(  x\right)
\]
where the last equality is true because $\rho$ is nonatomic. This proves
(\ref{eq:gaga}).\hfill$\blacksquare$

\subsection{Bayesian robustness\label{bayesianrobustness}}

Let $\Theta$ be a parameter space and $\mathcal{X}$ a sample space. For any
given sample $x\in\mathcal{X}$, a \emph{posterior functional} $\rho_{x}%
:\Delta\left(  \Theta\right)  \rightarrow\mathbb{R}$ maps a prior distribution
$\mu\in\Delta(\Theta)$ to a scalar representing a posterior statistic of
interest. For example, the posterior mean is given by $\rho_{x}(\mu
)=\int\theta\mathrm{d}\mu_{x}(\theta)$, with $\mu_{x}$ being the Bayesian
update of $\mu$.

Bayesian robustness (see, e.g., \cite{berg1} and \cite{berg2}) investigate how
posterior outcomes vary under different prior specifications, often by
examining the range of $\rho_{x}$ over a set of priors $\mathcal{M}%
\subseteq\Delta\left(  \Theta\right)  $. When $\mathcal{M}$ is convex, our
methods can be used to compute such a range. Indeed, if $\mu_{1},\mu_{2}%
\in\Delta\left(  \Theta\right)  $ are such that $\rho_{x}(\mathcal{M}%
)=[\rho_{x}(\mu_{1}),\rho_{x}(\mu_{2})]$, according to (\ref{eq:SSS}) it
follows that
\[
\min_{\nu\in\mathcal{M}}D\rho_{x}(\mu_{1};\nu)=\max_{\nu\in\mathcal{M}}%
D\rho_{x}(\mu_{2};\nu)=0
\]
These two necessary conditions can be used to develop numerical algorithms (we
refer to \cite{basu}).

Another statistical functional of interest is the expected posterior loss of a
Bayesian estimator. Formally, denote by $s\in S$ the choice of the estimator.
Given a loss function $\ell:S\times\Theta\rightarrow\mathbb{R}$ and a prior
$\mu\in\Delta(\Theta)$, the expected loss $L(\mu)$ of the Bayesian estimator
is
\[
L(\mu)=\inf_{s\in S}\int\ell(s,\theta)\mathrm{d}\mu(\theta)
\]
Given an alternative prior $\nu\in\Delta(\Theta)$, the directional derivative
$DL(\mu;\nu)$ captures the sensitivity of the estimator to the prior $\mu$
(see \cite{kchuang} and also \cite{stanca} for applications in economics). In
the rest of this subsection we show that, thanks to Theorem \ref{thm:danskin},
we are able to compute $DL(\mu;\nu)$ in important cases of interest, allowing
for an unbounded set of parameters (as common in applications).

So, let\emph{ }$S=\Theta=\mathbb{R}$. Consider\emph{ }$\mu,\nu\in
\Delta(\mathbb{R})$\emph{ }with finite first moment. In order to apply Theorem
\ref{thm:danskin} we study the related optimization problem%
\[
L^{\ast}\left(  \mu\right)  =\sup_{s\in\mathbb{R}}U\left(  s,\mu\right)
\]
where\emph{ }$U\left(  s,\mu\right)  =-\int\left\vert s-\theta\right\vert
d\mu\left(  \theta\right)  $.

\begin{proposition}
\label{prop:devrob} The affine directional derivative $DL^{\ast}\left(
\mu;\nu\right)  $ exists. More specifically,%
\begin{equation}
DL^{\ast}\left(  \mu;\nu\right)  =\sup_{s\in\sigma\left(  \mu\right)  }%
DU_{s}\left(  \mu;\nu\right)  =\sup_{s\in\sigma\left(  \mu\right)  }%
\int|s-\theta|\mathrm{d}\mu(\theta)-\int|s-\theta|\mathrm{d}\nu(\theta)
\label{eq:danskin}%
\end{equation}

\end{proposition}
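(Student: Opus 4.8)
The plan is to apply the Danskin--Demyanov envelope theorem (Theorem \ref{thm:danskin}) with $f=U$, the parameter set $C=\{\mu\in\Delta(\mathbb{R}):\int|\theta|\,\mathrm{d}\mu<\infty\}$, and $x=\mu$, $y=\nu$, so that $v=L^{\ast}$. First I would record that each section $U_{s}=U(s,\cdot)$ is \emph{linear} in the measure, being the integral of the fixed function $-|s-\cdot|$; since $\mu,\nu$ have finite first moment, $U_{s}(\mu)$ and $U_{s}(\nu)$ are finite. For a linear functional the affine directional derivative is just the increment, so
\[
DU_{s}(\mu;\nu)=U_{s}(\nu)-U_{s}(\mu)=\int|s-\theta|\,\mathrm{d}\mu(\theta)-\int|s-\theta|\,\mathrm{d}\nu(\theta),
\]
which exists for every $s$; this settles hypothesis (i) and already matches the right-hand side of (\ref{eq:danskin}). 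I would also note that $\sigma(\mu)=\arg\min_{s}\phi(s)$, where $\phi(s):=\int|s-\theta|\,\mathrm{d}\mu$, is the nonempty compact set of medians of $\mu$, since $\phi$ is convex and coercive and hence attains its minimum $\phi_{\ast}=\min_{s}\phi$ on a bounded interval. Thus $\sigma$ is viable.

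The crux is hypothesis (ii). Writing $\psi(s):=\int|s-\theta|\,\mathrm{d}\nu$ and $g:=\phi-\psi$, one has $g(s)=DU_{s}(\mu;\nu)$ and $U(s,\mu_{t})=-\phi(s)+t\,g(s)$ for $\mu_{t}=(1-t)\mu+t\nu$. The key observation is that $g$ is \emph{bounded}: it is continuous and has finite limits as $s\to\pm\infty$, because $\phi$ and $\psi$ each grow like $|s|$ with identical leading behavior, so their difference stays bounded; set $M:=\sup_{s}|g(s)|<\infty$. Consequently $U(\cdot,\mu_{t})\to U(\cdot,\mu)=-\phi$ uniformly as $t\downarrow0$, and this uniform convergence against a coercive limit is exactly what forces the maximizers to behave well.

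Given $t_{n}\downarrow0$, I would pick any $s_{n}\in\sigma(\mu_{t_{n}})$. Boundedness of $\{s_{n}\}$ follows by comparing values: since $s_{n}$ maximizes $U(\cdot,\mu_{t_{n}})$ while $\phi$ attains $\phi_{\ast}$, one gets $-\phi(s_{n})+t_{n}g(s_{n})\geq-\phi_{\ast}-t_{n}M$, whence $\phi(s_{n})\leq\phi_{\ast}+2t_{n}M\leq\phi_{\ast}+2M$, and coercivity of $\phi$ confines all $s_{n}$ to a fixed bounded set. For upper hemicontinuity, any convergent subsequence $s_{n_{k}}\to s^{\ast}$ satisfies $\phi(s_{n_{k}})\leq\phi_{\ast}+2t_{n_{k}}M\to\phi_{\ast}$, so by continuity $\phi(s^{\ast})\leq\phi_{\ast}$, forcing $\phi(s^{\ast})=\phi_{\ast}$, i.e.\ $s^{\ast}\in\sigma(\mu)$. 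To obtain the $\limsup$ bound I extract a subsequence along which $g(s_{n})\to\limsup_{n}g(s_{n})$, pass to a further convergent subsequence $s_{n_{k}}\to s^{\ast}\in\sigma(\mu)$, and use continuity of $g$ to get $\limsup_{n}g(s_{n})=g(s^{\ast})\leq\sup_{s\in\sigma(\mu)}g(s)$. Since $\bigl(U(s_{n},\mu_{t_{n}})-U(s_{n},\mu)\bigr)/t_{n}=g(s_{n})$ exactly (linearity of $U_{s_{n}}$), this is precisely inequality (\ref{eq:FFF}), so hypothesis (ii) holds.

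With (i) and (ii) verified and $\sigma$ viable, Theorem \ref{thm:danskin} yields the existence of $DL^{\ast}(\mu;\nu)$ together with $DL^{\ast}(\mu;\nu)=\sup_{s\in\sigma(\mu)}DU_{s}(\mu;\nu)$, which is (\ref{eq:danskin}). The main obstacle is the verification of (ii), and everything there reduces to the boundedness of $g$, which simultaneously delivers uniform convergence of the objectives and, through the coercivity of $\phi$, the compactness of the maximizer sets. The one remaining subtlety is that the $\limsup$ must be controlled along an \emph{arbitrary} maximizing sequence rather than a conveniently chosen one, which is precisely what the subsequence-of-subsequence argument handles.
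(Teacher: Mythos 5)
Your proposal is correct and reaches the conclusion by the same overall route as the paper (apply Theorem \ref{thm:danskin}, check (i) trivially from linearity of $U_{s}$ in the measure, and check (ii) by a subsequence argument showing that limit points of $s_{n}\in\sigma(\mu_{t_{n}})$ lie in $\sigma(\mu)$), but the crux --- the upper hemicontinuity of $\sigma$ at $\mu$ and the confinement of the maximizers to a compact set --- is handled quite differently. The paper exploits the explicit characterization of $\sigma(\lambda)$ as the median set, i.e.\ $F_{\lambda}(s)\geq\tfrac12$ and $F_{\lambda}(s^{-})\leq\tfrac12$, and proves the Claim $\overline{s}\in\sigma(\mu)$ by monotonicity and right-continuity of the distribution functions, asserting separately that $\bigcup_{t}\sigma(\mu_{t})$ sits in a compact interval $K$. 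You instead argue variationally: writing $U(s,\mu_{t})=-\phi(s)+t\,g(s)$ with $g=\phi-\psi$ bounded (which is right --- $\phi(s)-|s|$ and $\psi(s)-|s|$ have finite limits at $\pm\infty$ under the finite-first-moment hypothesis, and $g$ is continuous), you get the $\varepsilon$-argmin estimate $\phi(s_{n})\leq\phi_{\ast}+2t_{n}M$, and then coercivity and continuity of $\phi$ deliver both boundedness of $\{s_{n}\}$ and $\phi(s^{\ast})=\phi_{\ast}$ for any limit point. Your version buys generality and robustness: it never uses that the maximizers are medians and would apply verbatim to any loss $\ell(s,\theta)$ for which $\phi$ is continuous and coercive and $g$ is bounded, whereas the paper's argument is tied to the absolute-loss/median structure; the paper's version, in exchange, is more explicit about where the maximizers live (a concrete interval determined by the quantiles of $F_{\mu}$ and $F_{\nu}$). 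Both proofs handle the same final subtlety --- controlling the $\limsup$ along an arbitrary maximizing sequence via a subsequence-of-subsequences extraction --- in the same way. I see no gap in your argument.
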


\noindent\textbf{Proof} It is well known that given $\lambda\in\Delta
(\mathbb{R)}$ the set $\sigma\left(  \lambda\right)  $ of maximizers of the
section $U_{s}\equiv U\left(  s,\cdot\right)  $ is a median value of the
distribution $F_{\lambda}$. Specifically, a point $s\in\sigma\left(
\lambda\right)  $ is characterized by the equations%
\begin{equation}
F_{\lambda}\left(  s\right)  \geq\frac{1}{2}\quad\text{and}\quad F_{\lambda
}\left(  s^{-}\right)  \leq\frac{1}{2} \label{eq:JJJ}%
\end{equation}
The set $%
{\textstyle\bigcup_{t\in\left[  0,1\right]  }}
\sigma\left(  \left(  1-t\right)  \mu+t\nu\right)  $ is contained into a
compact interval $K$ of $\mathbb{R}$ (just pick any compact interval
containing the sets of points $1/2-\eta\leq F_{\mu}(x)\leq1/2+\eta$ and
$1/2-\eta\leq F_{\nu}(x)\leq1/2+\eta$ for some $0<\eta<1/2$). By convexity,
$\sigma\left(  \left(  1-t\right)  \mu+t\nu\right)  \subseteq K$ for all
$t\in\left[  0,1\right]  $. Let us show that the two assumptions of Theorem
\ref{thm:danskin} are satisfied. Clearly, $U_{s}$ is wa-differentiable and its
affine derivative is
\[
DU_{s}\left(  \mu;\nu\right)  =\int|s-\theta|\mathrm{d}\mu(\theta
)-\int|s-\theta|\mathrm{d}\nu(\theta)
\]
Hence, (i) of Theorem \ref{thm:danskin} holds. Take $t_{n}\downarrow0$ and let
$\left\{  s_{n}\right\}  \subseteq K$ be any sequence with $s_{n}\in
\sigma\left(  \left(  1-t_{n}\right)  \mu+t_{n}\nu\right)  $. In view of
Theorem \ref{thm:danskin}-(ii), take any subsequence $\left\{  s_{n_{k}%
}\right\}  $ such that the sequence%
\begin{equation}
\frac{U\left(  s_{n_{k}},\left(  1-t_{n_{k}}\right)  \mu+t_{n_{k}}\nu\right)
-U\left(  s_{n_{k}},\mu\right)  }{t_{n_{k}}} \label{eq:HHDD}%
\end{equation}
converges. As $K$ is compact, without loss of generality (passing if needed to
a further subsequence) we can assume that the sequence $\left\{  s_{n_{k}%
}\right\}  $ converges to a point $\overline{s}$.

\bigskip

\noindent\textbf{Claim} It holds $\overline{s}\in\sigma\left(  \mu\right)  $.

\bigskip

\noindent\textbf{Proof} For any $\varepsilon>0$ we have $s_{n}\leq\overline
{s}+\varepsilon$ for $n$ sufficiently large. By monotonicity,
\[
\frac{1}{2}\leq\left(  1-t_{n}\right)  F_{\mu}\left(  s_{n}\right)
+t_{n}F_{\nu}\left(  s_{n}\right)  \leq\left(  1-t_{n}\right)  F_{\mu}\left(
\overline{s}+\varepsilon\right)  +t_{n}F_{\nu}\left(  \overline{s}%
+\varepsilon\right)
\]
As $n\rightarrow\infty$ we get $1/2\leq F_{\mu}\left(  \overline
{s}+\varepsilon\right)  $. Since $F_{\mu}$ is right-continuos we have $1/2\leq
F_{\mu}\left(  \overline{s}\right)  $ which is the first condition of
(\ref{eq:JJJ}). As to the second one, begin now with $s_{n}\geq\overline
{s}-\varepsilon$. Hence, $s_{n}-\eta\geq\overline{s}-\varepsilon-\eta$ for any
$\eta>0$. Thus
\[
\left(  1-t_{n}\right)  F_{\mu}\left(  \overline{s}-\varepsilon-\eta\right)
+t_{n}F_{\nu}\left(  \overline{s}-\varepsilon-\eta\right)  \leq\left(
1-t_{n}\right)  F_{\mu}\left(  s_{n}-\eta\right)  +t_{n}F_{\nu}\left(
s_{n}-\eta\right)  \leq\frac{1}{2}%
\]
As $n\rightarrow\infty$, we get $F_{\mu}\left(  \overline{s}-\varepsilon
-\eta\right)  \leq1/2$ for all $\varepsilon+\eta>0$. Hence, condition
(\ref{eq:JJJ}) is true and so $\overline{s}\in\sigma\left(  \mu\right)
$.\hfill$\square$

\bigskip

By this Claim, we can write%
\begin{align*}
&  \lim_{k\rightarrow\infty}\frac{U\left(  s_{n_{k}},\left(  1-t_{n_{k}%
}\right)  \mu+t_{n_{k}}\nu\right)  -U\left(  s_{n_{k}},\mu\right)  }{t_{n_{k}%
}}=\lim_{k\rightarrow\infty}\int|s_{n_{k}}-\theta|\mathrm{d}\mu(\theta
)-\int|s_{n_{k}}-\theta|\mathrm{d}\nu(\theta)\\
&  =\int|\overline{s}-\theta|\mathrm{d}\mu(\theta)-\int|\overline{s}%
-\theta|\mathrm{d}\nu(\theta)=DU_{\overline{s}}\left(  \mu,\nu\right)
\leq\sup_{s\in\sigma\left(  \mu\right)  }DU_{s}\left(  \mu,\nu\right)
\end{align*}
This is the case for every subsequence $\left\{  s_{n_{k}}\right\}  $ for
which (\ref{eq:HHDD}) has a limit. Consequently,
\[
\lim\sup_{n\rightarrow\infty}\frac{U\left(  s_{n},\left(  1-t_{n}\right)
\mu+t_{n}\nu\right)  -U\left(  s_{n},\mu\right)  }{t_{n}}\leq\sup_{s\in
\sigma\left(  \mu\right)  }DU_{s}\left(  \mu,\nu\right)
\]
and so assumption (ii) holds. By Theorem \ref{thm:danskin} we thus have
(\ref{eq:danskin}).\hfill$\blacksquare$

\section{Extensions\label{ch5}}

\subsection{Hadamard and strict differentiability\label{sect:had}}

Endow the convex set $C$ with a topology finer than (or equal to) the relative
weak topology. This finer topology is assumed to be metrizable by a metric
$\rho$.\footnote{Usually the metric $\rho$ can be extended to the vector space
$X$. In this case the topology $\tau$ is compatible with the vector structure
of the space and $\rho$ $\ $inherits several nice properties. However, this
does not significantly affect our analysis.} This preliminary assumption
allows us to restrict ourself to the following sequential formulation of the
Hadamard directional derivative, adapted to our setting.\footnote{We omit for
sake of simplicity alternative Hadamard formulations related to the so-called
compact differentiability (see \cite{sap}).}

\begin{definition}
The (\emph{affine})\emph{ Hadamard directional} \emph{derivative} of
$f:C\rightarrow\mathbb{\mathbb{R}}$ at $x\in C$ along the direction $y\in C$
is given by%
\[
D_{H}f\left(  x;y\right)  =\lim_{n\rightarrow\infty}\frac{f\left(  \left(
1-t_{n}\right)  x+t_{n}y_{n}\right)  -f\left(  x\right)  }{t_{n}}%
\]
when the limit holds for every sequence $t_{n}\downarrow0$ and every sequence
$\left\{  y_{n}\right\}  $ in $C$ with $\rho\left(  y_{n},y\right)
\rightarrow0$.
\end{definition}

Clearly in this case $D_{H}f\left(  x;y\right)  =Df\left(  x;y\right)  $. The
function $f$ is called \emph{Hadamard wa-differentiable }at\emph{ }$x$ if
$D_{H}f\left(  x;\cdot\right)  $ is affine. It is called \emph{Hadamard
a-differentiable }at $x$ if $D_{H}f\left(  x;\cdot\right)  $ is extendable.
Accordingly, $\nabla_{H}f\left(  x\right)  \in X^{\ast}$ denotes a
representative of the equivalence class of Hadamard gradients.

\begin{definition}
A function $f:C\rightarrow\mathbb{\mathbb{R}}$ is \emph{strictly
wa-differentiable} at $x$ if there exists an affine functional $D_{S}f\left(
x;\cdot\right)  :C\rightarrow\mathbb{R}$ such that, for each $y\in C$,%
\[
D_{S}f\left(  x;y\right)  =\lim_{n\rightarrow\infty}\frac{f\left(  \left(
1-t_{n}\right)  x_{n}+t_{n}y\right)  -f\left(  x\right)  }{t_{n}}%
\]
for every sequence $t_{n}\downarrow0$ and every sequence $\left\{
x_{n}\right\}  \subseteq C$ with $\rho\left(  x_{n},x\right)  \rightarrow0$.
\end{definition}

It is worth mentioning a stronger notion of differentiability that combines
the two previous ones.

\begin{definition}
\label{def:SHAD}A function $f:C\rightarrow\mathbb{\mathbb{R}}$ is
\emph{strictly Hadamard wa-differentiable} at $x$ if there exists an affine
functional $D_{SH}f\left(  x;\cdot\right)  :C\rightarrow\mathbb{R}$ such that,
for each $y\in C$,%
\[
D_{SH}f\left(  x;y\right)  =\lim_{n\rightarrow\infty}\frac{f\left(  \left(
1-t_{n}\right)  x_{n}+t_{n}y_{n}\right)  -f\left(  x_{n}\right)  }{t_{n}}%
\]
for every sequence $t_{n}\downarrow0$ and all sequences $\left\{
x_{n}\right\}  $ and $\left\{  y_{n}\right\}  $ in $C$ with $\rho\left(
x_{n},x\right)  \rightarrow0$ and $\rho\left(  y_{n},x\right)  \rightarrow0$.
\end{definition}

\begin{example}
\emph{The quadratic functional }$Q:\Delta\left(  Y\right)  \rightarrow
\mathbb{R}$ \emph{given by}%
\[
Q\left(  \mu\right)  =\int\psi\left(  x,y\right)  \mathrm{d}\mu\left(
x\right)  \otimes\mathrm{d}\mu\left(  y\right)
\]
\emph{with }$\psi$\emph{ symmetric}, \emph{is strictly Hadamard
wa-differentiable under the Prokhorov metric when }$Y$\emph{ is a Polish
space. Indeed, let }$\mu_{n}\Longrightarrow\mu$\emph{, }$\lambda
_{n}\Longrightarrow\lambda$\emph{ and }$t_{n}\downarrow0$ \emph{(here}
$\Longrightarrow$ \emph{denotes the convergence for the Prokhorov metric)}%
$.$\emph{ Then},%
\[
\frac{Q\left(  \left(  1-t_{n}\right)  \mu_{n}+t_{n}\lambda_{n}\right)
-Q\left(  \mu_{n}\right)  }{t_{n}}=t_{n}Q\left(  \lambda_{n}\right)  +\left(
t_{n}-2\right)  Q\left(  \mu_{n}\right)  +2\left(  1-t_{n}\right)  \int
\psi\left(  x,y\right)  \mathrm{d}\mu_{n}\otimes d\lambda_{n}%
\]
\emph{As }$Y$\emph{ is Polish, }$\mu_{n}\Longrightarrow\mu$ \emph{and}
$\lambda_{n}\Longrightarrow\lambda$ \emph{imply} $\mu_{n}\otimes\lambda
_{n}\Longrightarrow\mu\otimes\lambda$. \emph{Hence, the limit of this ratio
exists, and }$D_{SH}Q\left(  \mu;\lambda\right)  =2\int\psi\left(  x,y\right)
\mathrm{d}\mu\otimes\mathrm{d}\lambda-2Q\left(  \mu\right)  $.\hfill
$\blacktriangle$
\end{example}

This example can be easily generalized by showing that a quadratic functional
$Q:C\rightarrow\mathbb{R}$ is strictly Hadamard wa-differentiable under a
metric $\rho$ when the associated biaffine form $B_{S}$ is $\rho$-continuous
on $C\times C$. To further elaborate, we need to introduce a more stringent
class of metrics.

\begin{definition}
\label{def:con} A metric $\rho$ on $C$ is \emph{convex }if
\[
\rho\left(  x,\alpha y_{1}+(1-\alpha)y_{2}\right)  \leq\alpha\rho\left(
x,y_{1}\right)  +(1-\alpha)\rho\left(  x,y_{2}\right)
\]
for all $x,y_{1},y_{2}\in C$ and all $\alpha\in\left[  0,1\right]  $.
\end{definition}

For instance, the Prokhorov metric on $\Delta\left(  Y\right)  $, with $Y$ a
metric and separable space, is equivalent to the convex Dudley metric (see
Theorem 11.3.3 of \cite{dudley}). With this, next we establish a couple of
useful differentiability criteria.

\begin{proposition}
\label{prop:ztm}Let $\rho$ be convex and $f:C\rightarrow\mathbb{R}$ be
wa-differentiable in a $\rho$-neighborhood of a point $\bar{x}\in C$.

\begin{enumerate}
\item[(i)] If the map $x\mapsto Df\left(  x;y\right)  $ is $\rho$-continuous
at $\bar{x}$ for every $y\in C$, then $f$ is strictly wa-differentiable at
$\bar{x}$.

\item[(ii)] If the map $\left(  x,y\right)  \mapsto Df\left(  x;y\right)  $ is
$\rho$-continuous at $\left(  \bar{x},y\right)  $ for every $y\in C$, then $f$
is strictly Hadamard wa-differentiable at $\bar{x}$.
\end{enumerate}
\end{proposition}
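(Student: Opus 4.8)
The plan is to show that both strict differentials coincide with the ordinary wa-differential at $\bar x$, that is, $D_Sf(\bar x;\cdot)=Df(\bar x;\cdot)$ in (i) and $D_{SH}f(\bar x;\cdot)=Df(\bar x;\cdot)$ in (ii). Since $f$ is wa-differentiable at $\bar x$, the map $Df(\bar x;\cdot)$ is affine, so affineness of the candidate differential is automatic and the only real task is to prove that the relevant difference quotients converge to $Df(\bar x;y)$ for every admissible choice of sequences. The engine for this is a localized form of the mean value theorem: Lemma \ref{lmm:ccc} lets me rewrite each quotient as a value of $Df$ at an intermediate point of a short segment, and I then drive that point to $\bar x$ and invoke the continuity hypothesis.

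For (i), fix $y\in C$ and take $t_n\downarrow 0$ and $x_n\to\bar x$ in $\rho$. First I would use the convexity of $\rho$ to write
\[
\rho\big((1-s)x_n+sy,\bar x\big)\le(1-s)\,\rho(x_n,\bar x)+s\,\rho(y,\bar x)\qquad\forall s\in[0,1],
\]
whose right-hand side tends to $0$ uniformly for $s\in[0,t_n]$; hence for $n$ large the initial sub-segment $\{(1-s)x_n+sy:0\le s\le t_n\}$ lies inside the $\rho$-neighborhood on which $f$ is wa-differentiable. There the auxiliary function $\varphi_n(s)=f((1-s)x_n+sy)$ is differentiable with $\varphi_n'(s)=(1-s)^{-1}Df((1-s)x_n+sy;y)$ by Lemma \ref{lmm:ccc}, so applying the scalar mean value theorem on $[0,t_n]$ gives some $\theta_n\in(0,t_n)$ with
\[
\frac{f((1-t_n)x_n+t_ny)-f(x_n)}{t_n}=\varphi_n'(\theta_n)=\frac{1}{1-\theta_n}\,Df(z_n;y),\qquad z_n=(1-\theta_n)x_n+\theta_n y.
\]
Because $0<\theta_n<t_n\to 0$, the same convexity estimate yields $z_n\to\bar x$ while $(1-\theta_n)^{-1}\to 1$, and the $\rho$-continuity of $x\mapsto Df(x;y)$ at $\bar x$ forces $Df(z_n;y)\to Df(\bar x;y)$. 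As the sequences were arbitrary, the strict limit exists and equals $Df(\bar x;y)$.

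For (ii) the reasoning is the same, except that the direction now moves: take $x_n\to\bar x$ and $y_n\to y$. The convexity estimate, with the bounded quantity $\rho(y_n,\bar x)$ in place of $\rho(y,\bar x)$, again shows that the sub-segment $\{(1-s)x_n+sy_n:0\le s\le t_n\}$ is eventually contained in the neighborhood, and the mean value theorem produces $\theta_n\in(0,t_n)$ and a mean point $w_n=(1-\theta_n)x_n+\theta_n y_n$ with
\[
\frac{f((1-t_n)x_n+t_ny_n)-f(x_n)}{t_n}=\frac{1}{1-\theta_n}\,Df(w_n;y_n).
\]
Now $w_n\to\bar x$ and $y_n\to y$, so $(w_n,y_n)\to(\bar x,y)$; since here \emph{both} arguments vary, I would invoke the joint $\rho$-continuity of $(x,y)\mapsto Df(x;y)$ at $(\bar x,y)$ to conclude $Df(w_n;y_n)\to Df(\bar x;y)$, giving the strict Hadamard limit $Df(\bar x;y)$.

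The step I expect to be most delicate is the localization. Lemma \ref{lmm:ccc} and Theorem \ref{cor:mean} are phrased for functions wa-differentiable along an entire segment, whereas here wa-differentiability is only assumed on a $\rho$-neighborhood of $\bar x$ and the endpoint $y$ (or $y_n$) may lie well outside it; applying the mean value theorem on the full segment $[x_n,y]$ would produce a mean point that need not be near $\bar x$. The remedy is to differentiate only along the initial sub-segment of vanishing length $t_n$, and convexity of $\rho$ is precisely what guarantees that this sub-segment is absorbed into the neighborhood for large $n$ — this is the structural reason the hypothesis requires $\rho$ convex. The only other point to keep straight is the clean split between the two continuity assumptions: a fixed direction in (i) needs continuity in the base point alone, while the moving direction in (ii) genuinely requires joint continuity.
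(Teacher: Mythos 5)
Your proof is correct and follows essentially the same route as the paper's: both use the convexity of $\rho$ to trap the short sub-segment $\{(1-s)x_n+sy:0\le s\le t_n\}$ inside the neighborhood where $f$ is wa-differentiable, apply a mean value argument there to express the difference quotient as $\tfrac{1}{1-\theta_n}Df(z_n;y)$ at an intermediate point $z_n\to\bar x$, and conclude from the continuity hypothesis. The only cosmetic difference is that the paper invokes Theorem \ref{cor:mean} on the pair $\left(x_n,\,x_n+t_n(y-x_n)\right)$ and then rescales via the homogeneity identity (\ref{eq:sat}), whereas you apply the scalar mean value theorem directly to $\varphi_n$ on $[0,t_n]$ using (\ref{eq:ghl}); these are the same computation, and your explicit treatment of part (ii) (which the paper leaves as ``similar'') and of the localization of Lemma \ref{lmm:ccc} is sound.
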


\noindent\textbf{Proof} \ Let $f$ be wa-differentiable on $U_{\varepsilon
}\left(  \bar{x}\right)  =\left\{  x\in C;\text{ }\rho(x,\overline{x}%
)\leq\varepsilon\right\}  $. As $\rho$ is convex, for every $x,y\in C$, we
have
\[
\rho\left(  x_{t},\bar{x}\right)  \leq\left(  1-t\right)  \rho\left(
x,\bar{x}\right)  +t\rho\left(  y,\bar{x}\right)  \leq\rho\left(  x,\bar
{x}\right)  +t\rho\left(  y,\bar{x}\right)  .
\]
Consequently, taking the points $x_{n}$ such that $\rho\left(  x_{n},\bar
{x}\right)  <\varepsilon/2$ and $0<t_{n}<\varepsilon/\left[  2\rho\left(
y,\bar{x}\right)  \right]  $, then the two sequences of points $x_{n}$ and
$x_{n}+t_{n}\left(  y-x_{n}\right)  $ belong to the neighborhood
$U_{\varepsilon}\left(  \bar{x}\right)  $. By mean value Theorem
\ref{cor:mean},%
\[
f\left(  x_{n}+t_{n}\left(  y-x_{n}\right)  \right)  -f\left(  x_{n}\right)
=\frac{1}{1-\tau_{n}}Df\left(  x_{n}+\tau_{n}t_{n}\left(  y-x_{n}\right)
;x_{n}+t_{n}\left(  y-x_{n}\right)  \right)
\]
for some $\tau_{n}\in\left(  0,1\right)  $. On the other hand,%
\[
x_{n}+t_{n}\left(  y-x_{n}\right)  =A_{n}\left[  x_{n}+\tau_{n}t_{n}\left(
y-x_{n}\right)  \right]  +B_{n}y
\]
where%

\[
A_{n}=\left(  \frac{1-t_{n}}{1-\tau_{n}t_{n}}\right)  \text{, }B_{n}=\left(
\frac{\left(  1-\tau_{n}\right)  t_{n}}{1-\tau_{n}t_{n}}\right)  \text{ and
}A_{n}+B_{n}=1.
\]
Therefore, by (\ref{eq:sat}) we get%
\[
\frac{f\left(  x_{n}+t_{n}\left(  y-x_{n}\right)  \right)  -f\left(
x_{n}\right)  }{t_{n}}=\frac{1}{1-\tau_{n}t_{n}}Df\left(  x_{n}+\tau_{n}%
t_{n}\left(  y-x_{n}\right)  ;y\right)
\]
Letting $t_{n}\downarrow0$ and $\rho\left(  x_{n},\bar{x}\right)
\rightarrow0$ the limit exists and $D_{S}f\left(  \bar{x};y\right)  =Df\left(
\bar{x};y\right)  $. The proof of the second statement is similar.\hfill
$\blacksquare$

\bigskip

Here is a kind of converse of the previous statement.

\begin{proposition}
\label{prop:bao}Let $f:C\rightarrow\mathbb{R}$ be strictly Hadamard
wa-differentiable at $\bar{x}\in C$. If the affine directional derivative
$Df\left(  x;y\right)  $ exists in a $\rho$-neighborhood of the point $\bar
{x}$, then $Df\left(  \cdot;\cdot\right)  $ is $\rho$-continuous at $\left(
\bar{x},y\right)  $ for all $y\in C$.
\end{proposition}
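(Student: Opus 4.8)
The plan is to use the metrizability of $\rho$ and establish sequential continuity: fixing $y\in C$, I will show that $Df\left(  x_{n};y_{n}\right)  \rightarrow Df\left(  \bar{x};y\right)$ whenever $\left\{  x_{n}\right\}  ,\left\{  y_{n}\right\}  \subseteq C$ satisfy $\rho\left(  x_{n},\bar{x}\right)  \rightarrow0$ and $\rho\left(  y_{n},y\right)  \rightarrow0$. The first observation is that strict Hadamard wa-differentiability, applied to the constant sequences $x_{n}\equiv\bar{x}$ and $y_{n}\equiv y$, yields $D_{SH}f\left(  \bar{x};y\right)  =Df\left(  \bar{x};y\right)$, so the target value of the continuity claim is exactly the strict Hadamard differential at $\bar{x}$.

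The core of the argument is a diagonal choice of step sizes. Since $Df\left(  x;y\right)$ exists throughout a $\rho$-neighborhood $U$ of $\bar{x}$, for every $n$ large enough that $x_{n}\in U$ the directional derivative $Df\left(  x_{n};y_{n}\right)  =\lim_{t\downarrow0}t^{-1}\left[  f\left(  \left(  1-t\right)  x_{n}+ty_{n}\right)  -f\left(  x_{n}\right)  \right]$ is well defined (the finitely many initial indices with $x_{n}\notin U$ are immaterial for a limit). Hence I can select, inductively, a strictly decreasing null sequence $t_{n}\downarrow0$ with $t_{n}$ so small that
\[
\left\vert \frac{f\left(  \left(  1-t_{n}\right)  x_{n}+t_{n}y_{n}\right)  -f\left(  x_{n}\right)  }{t_{n}}-Df\left(  x_{n};y_{n}\right)  \right\vert <\frac{1}{n}.
\]

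Now the chosen $t_{n}\downarrow0$, together with $x_{n}\rightarrow\bar{x}$ and $y_{n}\rightarrow y$, constitute precisely the data to which strict Hadamard wa-differentiability applies, so the difference quotients on the left converge to $D_{SH}f\left(  \bar{x};y\right)  =Df\left(  \bar{x};y\right)$. Because the displayed error is $<1/n\rightarrow0$, the triangle inequality forces $Df\left(  x_{n};y_{n}\right)  \rightarrow Df\left(  \bar{x};y\right)$. As the sequences $\left\{  x_{n}\right\}  ,\left\{  y_{n}\right\}$ converging to $\bar{x}$ and $y$ were arbitrary, this is exactly the $\rho$-continuity of $Df\left(  \cdot;\cdot\right)$ at $\left(  \bar{x},y\right)$, and $y\in C$ was arbitrary.

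The delicate point is that the auxiliary sequence $t_{n}$ is built from, and therefore depends on, the particular sequences $x_{n}$ and $y_{n}$; the argument closes only because the strict Hadamard hypothesis is quantified over \emph{all} admissible triples $\left(  t_{n},x_{n},y_{n}\right)$ simultaneously, so the single constructed triple is automatically covered. A minor technical care is needed to keep $t_{n}$ monotone decreasing, to honor the meaning of $t_{n}\downarrow0$ in Definition \ref{def:SHAD}; this is arranged by imposing $t_{n}<\min\left\{  t_{n-1},1/n,\delta_{n}\right\}$ at the inductive step, where $\delta_{n}$ is the radius within which the difference quotient at $x_{n}$ approximates $Df\left(  x_{n};y_{n}\right)$ to within $1/n$.
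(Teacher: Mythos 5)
Your proposal is correct and follows essentially the same route as the paper's proof: for each pair of sequences $x_{n}\rightarrow\bar{x}$, $y_{n}\rightarrow y$, choose auxiliary step sizes $t_{n}\downarrow0$ so that the difference quotient approximates $Df\left(x_{n};y_{n}\right)$, then invoke strict Hadamard wa-differentiability on the constructed triple and conclude by the triangle inequality. Your $1/n$-diagonal bookkeeping (versus the paper's fixed $\varepsilon/2$) and your explicit remark that $D_{SH}f\left(\bar{x};y\right)=Df\left(\bar{x};y\right)$ via constant sequences are minor presentational refinements, not a different argument.
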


Likewise, if $f$ is strictly wa-differentiable at $\bar{x}$, then $Df\left(
\cdot\text{ };y\right)  $ is $\rho$-continuous at $\bar{x}$.

\bigskip

\noindent\textbf{Proof} Fix $y\in C$ and consider two sequences $\left\{
x_{n}\right\}  ,\left\{  y_{n}\right\}  \subseteq C$ so that $\rho\left(
y_{n},y\right)  \rightarrow0$ and $\rho\left(  x_{n},\bar{x}\right)
\rightarrow0$ and such that $\left\{  x_{n}\right\}  $ is contained into the
claimed neighborhood of $\bar{x}$. Fix $\varepsilon>0$. For every $n$ there is
$t_{n}\in\left(  0,1\right)  $ so that
\[
\left\vert \frac{f\left(  \left(  1-t_{n}\right)  x_{n}+t_{n}y_{n}\right)
-f\left(  x_{n}\right)  }{t_{n}}-Df\left(  x_{n};y_{n}\right)  \right\vert
\leq\frac{\varepsilon}{2}.
\]
The sequence $\left\{  t_{n}\right\}  $ can be chosen so that $t_{n}%
\downarrow0$. The hypothesis of strict Hadamard wa-differentiability implies
that, for all $n$ sufficiently large,%
\[
\left\vert \frac{f\left(  \left(  1-t_{n}\right)  x_{n}+t_{n}y_{n}\right)
-f\left(  x_{n}\right)  }{t_{n}}-D_{H}f\left(  \bar{x};y\right)  \right\vert
\leq\frac{\varepsilon}{2}%
\]
that yields $\left\vert Df\left(  x_{n};y_{n}\right)  -D_{H}f\left(  \bar
{x};y\right)  \right\vert \leq\varepsilon$. Hence, $Df\left(  x_{n}%
;y_{n}\right)  \rightarrow D_{H}f\left(  \bar{x};y\right)  $ as $\rho\left(
y_{n},y\right)  \rightarrow0$ and $\rho\left(  x_{n},\bar{x}\right)
\rightarrow0$, which is the desired continuity property.\hfill$\blacksquare$

\bigskip

Next we establish a noteworthy consequence.

\begin{proposition}
If $f:C\rightarrow\mathbb{R}$ is strictly Hadamard wa-differentiable, then it
is $\rho$-continuous.
\end{proposition}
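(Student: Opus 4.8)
The plan is to reduce the global statement to continuity at an arbitrary point $x\in C$ and then exploit the fact that strict Hadamard wa-differentiability forces the affine directional derivative $Df(\cdot\,;\cdot)$ to be jointly $\rho$-continuous at the diagonal point $(x,x)$, where it vanishes. First I would record that strict Hadamard wa-differentiability at every point implies ordinary wa-differentiability at every point: taking the constant sequences $x_{n}\equiv x$ and $y_{n}\equiv y$ in Definition \ref{def:SHAD} yields $D_{SH}f(x;y)=Df(x;y)$, so $Df(x;\cdot)$ is affine and $f$ is wa-differentiable throughout $C$. In particular $Df$ exists in a $\rho$-neighborhood of each point, so Proposition \ref{prop:bao} applies and tells me that $Df(\cdot\,;\cdot)$ is $\rho$-continuous at $(x,y)$ for every $y\in C$; specializing to $y=x$ and using the identity $Df(x;x)=0$ gives $Df(u;v)\rightarrow 0$ whenever $u\rightarrow x$ and $v\rightarrow x$ in $\rho$.

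Next, fix $x$ and a sequence $z_{n}\rightarrow x$ in $\rho$; the goal is $f(z_{n})\rightarrow f(x)$. Since $f$ is wa-differentiable it is hemidifferentiable (Lemma \ref{lmm:ccc}), so the mean value theorem (Theorem \ref{cor:mean}) applies to the segment from $x$ to $z_{n}$: there is $s_{n}\in(0,1)$ with
\[
f(z_{n})-f(x)=\frac{1}{1-s_{n}}Df\left(x^{(n)}_{s_{n}};z_{n}\right)=-\frac{1}{s_{n}}Df\left(x^{(n)}_{s_{n}};x\right),
\]
where $x^{(n)}_{s}=(1-s)x+s z_{n}$ and the second equality is the identity (\ref{eq:ghl}) from Lemma \ref{lmm:ccc}. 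Because each $x^{(n)}_{s_{n}}$ lies on the segment $[x,z_{n}]$, it converges to $x$ in $\rho$: this is immediate when $\rho$ is convex, since $\rho(x^{(n)}_{s_{n}},x)\leq s_{n}\rho(z_{n},x)\leq\rho(z_{n},x)\rightarrow 0$ by Definition \ref{def:con}, and it holds more generally whenever $\rho$ is compatible with the vector structure, as in the relevant applications.

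Finally I would combine the two expressions to defeat the unbounded prefactors. Writing $a_{n}=Df(x^{(n)}_{s_{n}};z_{n})$ and $b_{n}=Df(x^{(n)}_{s_{n}};x)$, the continuity established in the first step gives $a_{n}\rightarrow 0$ and $b_{n}\rightarrow 0$, since $(x^{(n)}_{s_{n}},z_{n})\rightarrow(x,x)$ and $(x^{(n)}_{s_{n}},x)\rightarrow(x,x)$. The key point is that $f(z_{n})-f(x)$ equals both $a_{n}/(1-s_{n})$ and $-b_{n}/s_{n}$, and at least one prefactor is bounded by $2$: if $s_{n}\leq 1/2$ then $|f(z_{n})-f(x)|=|a_{n}|/(1-s_{n})\leq 2|a_{n}|$, while if $s_{n}\geq 1/2$ then $|f(z_{n})-f(x)|=|b_{n}|/s_{n}\leq 2|b_{n}|$. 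In either case $|f(z_{n})-f(x)|\leq 2\max\{|a_{n}|,|b_{n}|\}\rightarrow 0$, proving continuity at $x$; since $x$ is arbitrary, $f$ is $\rho$-continuous. I expect the main obstacle to be exactly the blow-up of the mean-value prefactor as $s_{n}\rightarrow 0$ or $s_{n}\rightarrow 1$; the device of carrying both equivalent forms of $\varphi'$ from (\ref{eq:ghl}) and selecting the one with prefactor $\leq 2$ (using $\min\{1/s_{n},1/(1-s_{n})\}\leq 2$) is what makes the argument work, together with the joint continuity of $Df$ on the diagonal supplied by Proposition \ref{prop:bao}.
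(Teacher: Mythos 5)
Your proof is correct and follows essentially the same route as the paper's: apply the mean value theorem on the segment from $x$ to $z_{n}$, keep both expressions $\frac{1}{1-s_{n}}Df(x^{(n)}_{s_{n}};z_{n})$ and $-\frac{1}{s_{n}}Df(x^{(n)}_{s_{n}};x)$ from Lemma \ref{lmm:ccc}, split according to whether $s_{n}$ lies in $(0,1/2)$ or $[1/2,1)$ so that one prefactor is bounded by $2$, and let the derivative terms vanish by the continuity of $Df(\cdot\,;\cdot)$ at the diagonal supplied by Proposition \ref{prop:bao}. If anything, you are more explicit than the paper in two places it leaves tacit, namely the reduction from strict Hadamard wa-differentiability to wa-differentiability and the verification that the intermediate points $x^{(n)}_{s_{n}}$ actually $\rho$-converge to $x$.
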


\noindent\textbf{Proof} Let $x\in C$. Take a sequence $\left\{  x_{n}\right\}
\subseteq C$ such that $\rho\left(  x_{n},x\right)  \rightarrow0$. By applying
the mean value theorem (Theorem \ref{cor:mean}) to the two points $x$ and
$x_{n}$, there exists a sequence $\left\{  t_{n}\right\}  \subseteq\left(
0,1\right)  $ such that
\begin{equation}
f\left(  x_{n}\right)  -f\left(  x\right)  =\frac{1}{1-t_{n}}Df\left(  \left(
1-t_{n}\right)  x+t_{n}x_{n};x_{n}\right)  =-\frac{1}{t_{n}}Df\left(  \left(
1-t_{n}\right)  x+t_{n}x_{n};x\right)  \label{eq:VVV}%
\end{equation}
Partition the elements of the sequence $\left\{  t_{n}\right\}  $ so to obtain
two finite or infinite subsequences for which either $t_{n}\in\left(
0,1/2\right)  $ or $t_{n}\in\left[  1/2,1\right)  .$ For the elements that
fall into $\left(  0,1/2\right)  $ we have $\left(  1-t_{n}\right)  ^{-1}<2$.
Analogously, it holds $\left\vert -t_{n}^{-1}\right\vert \leq2$ for the points
of the sequence that fall into $\left[  1/2,1\right)  $. Hence,%
\[
Df\left(  \left(  1-t_{n}\right)  x+t_{n}x_{n};x_{n}\right)  \rightarrow
Df\left(  x;x\right)  =0
\]
as well as
\[
Df\left(  \left(  1-t_{n}\right)  x+t_{n}x_{n};x\right)  \rightarrow Df\left(
x;x\right)  =0
\]
Passing, if needed, to subsequences, in light of (\ref{eq:VVV}) we infer that
$f\left(  x_{n}\right)  -f\left(  x\right)  \rightarrow0$ as $\rho\left(
x_{n},x\right)  \rightarrow0$.\hfill$\blacksquare$

\bigskip

Let us illustrate our Hadamard results with some examples.

\begin{example}
\emph{(i) The functional }$T\left(  F\right)  =F\left(  x_{0}\right)  $\emph{
of Example \ref{diniexample} is wa-differentiable, with }$DT\left(
F;G\right)  =G\left(  x_{0}\right)  -F\left(  x_{0}\right)  $\emph{. By
Proposition \ref{prop:ztm}, under the Prokhorov metric, }$T$\emph{ is strictly
wa-differentiable at every probability distribution }$F$\emph{ that is
continuous at the point }$x_{0}$. \emph{Instead, under the }Kolmogorov
metric\emph{ }$\rho_{\infty}$\emph{,\footnote{That is, $\rho_{\infty}\left(
F,G\right)  =\sup\left\vert F\left(  t\right)  -G\left(  t\right)  \right\vert
$.} the functional }$T$\emph{ is strictly Hadamard wa-differentiable at any
point }$F.$\emph{ Indeed,}
\[
\left\vert DT\left(  F;G\right)  -DT\left(  F_{1};G_{1}\right)  \right\vert
\leq\left\vert G\left(  x_{0}\right)  -G_{1}\left(  x_{0}\right)  \right\vert
+\left\vert F\left(  x_{0}\right)  -F_{1}\left(  x_{0}\right)  \right\vert
\leq\left\Vert G-G_{1}\right\Vert _{\infty}+\left\Vert F-F_{1}\right\Vert
_{\infty}%
\]
\emph{By Proposition \ref{prop:ztm}, the continuity of }$DT\left(  \cdot\text{
};\cdot\right)  $\emph{ delivers the desired result.}

\emph{(ii)} \emph{In light of Example \ref{integral}, it is easy to check
that, if }$F\left(  s,\cdot\right)  $\emph{ is strictly Hadamard
wa-differentiable for the Euclidean metric of }$\mathbb{R}^{n},$ \emph{then
the functional }$I:C\rightarrow\mathbb{R}$\emph{ is strictly Hadamard
wa-differentiable for the uniform metric }$\rho_{\infty}(u,v)=\sup
_{S}\left\Vert u-v\right\Vert _{n}$\emph{.} \hfill$\blacktriangle$
\end{example}

\subsection{Frechet differentiability}

We begin with a notion of differentiability closely related to the uniform
convergence of the directional derivative over bounded sets.

\begin{definition}
A function $f:C\rightarrow\mathbb{R}$ is \emph{boundedly wa-differentiable} at
$x\in C$ if there exists a $\rho$-continuous and affine functional
$A:C\rightarrow\mathbb{R}$ such that%
\begin{equation}
\lim_{n\rightarrow\infty}\left(  \frac{f\left(  \left(  1-t_{n}\right)
x+t_{n}y_{n}\right)  -f\left(  x\right)  }{t_{n}}-A\left(  y_{n}\right)
\right)  =0 \label{eq:dupo}%
\end{equation}
for every sequence $t_{n}\downarrow0$ and every $\rho$-bounded sequence
$\left\{  y_{n}\right\}  $ in $C$.
\end{definition}

By setting $y_{n}=x$ for all $n$, one gets $A\left(  x\right)  =0$. Moreover,
since $\rho$-convergent sequences are $\rho$-bounded, a boundedly
wa-differentiable function at $x\in C$ is also Hadamard wa-differentiable, and
$D_{H}f\left(  x,\cdot\right)  =A\left(  \cdot\right)  $.

\begin{proposition}
\label{prop:gaga}Let $\rho$ be convex. A function $f:C\rightarrow\mathbb{R}$
is boundedly wa-differentiable at $x\in C$ if and only if the limit
(\ref{eq:dupo}) holds for all sequences $\left\{  y_{n}\right\}  $ in some
$\rho$-neighborhood $U_{\varepsilon}\left(  x\right)  $ of the point $x$.
\end{proposition}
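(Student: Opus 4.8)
The plan is to prove the two implications separately; the forward one is immediate and the converse carries all the content. For the easy direction, suppose $f$ is boundedly wa-differentiable at $x$ with associated $\rho$-continuous affine functional $A$. The closed ball $U_{\varepsilon}(x)=\{y\in C:\rho(y,x)\leq\varepsilon\}$ is itself $\rho$-bounded, so any sequence contained in it is $\rho$-bounded and the defining limit (\ref{eq:dupo}) holds along it. Thus (\ref{eq:dupo}) holds in particular for all sequences in $U_{\varepsilon}(x)$. For the converse I assume there is a $\rho$-continuous affine $A$ for which (\ref{eq:dupo}) holds along every sequence in some $U_{\varepsilon}(x)$, and I must upgrade this to \emph{all} $\rho$-bounded sequences. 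First I would record that $A(x)=0$: feeding the constant sequence $y_{n}\equiv x\in U_{\varepsilon}(x)$ into (\ref{eq:dupo}) makes the difference quotient vanish identically, leaving $-A(x)=0$.

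The crux is a rescaling that pulls an arbitrary bounded sequence back into the neighborhood by exploiting convexity of $\rho$. Given a $\rho$-bounded sequence $\{y_{n}\}$, say $\rho(y_{n},x)\leq M$; if $M\leq\varepsilon$ the sequence already lies in $U_{\varepsilon}(x)$ and there is nothing to prove, so I assume $M>\varepsilon$ and set $\lambda=\varepsilon/M\in(0,1)$. Define $z_{n}=(1-\lambda)x+\lambda y_{n}\in C$. Convexity of $\rho$ gives
\[
\rho(x,z_{n})\leq(1-\lambda)\rho(x,x)+\lambda\rho(x,y_{n})\leq\lambda M=\varepsilon,
\]
so $\{z_{n}\}\subseteq U_{\varepsilon}(x)$. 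The key algebraic identity is that, with $s_{n}=t_{n}/\lambda$, substituting $y_{n}=\lambda^{-1}z_{n}-\lambda^{-1}(1-\lambda)x$ and simplifying yields
\[
(1-t_{n})x+t_{n}y_{n}=(1-s_{n})x+s_{n}z_{n}.
\]
Since $t_{n}\downarrow0$ forces $s_{n}\downarrow0$ (and $s_{n}\leq1$ for large $n$), I may apply the neighborhood hypothesis to $\{z_{n}\}$ along $\{s_{n}\}$ to obtain
\[
\frac{f((1-s_{n})x+s_{n}z_{n})-f(x)}{s_{n}}=A(z_{n})+o(1).
\]

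To finish, I divide the identity through by $t_{n}=\lambda s_{n}$, which shows that the difference quotient at $y_{n}$ equals $\lambda^{-1}$ times the one at $z_{n}$, hence equals $\lambda^{-1}(A(z_{n})+o(1))$. Affinity of $A$ together with $A(x)=0$ gives $A(z_{n})=(1-\lambda)A(x)+\lambda A(y_{n})=\lambda A(y_{n})$, so the factor $\lambda^{-1}$ cancels and the difference quotient at $y_{n}$ equals $A(y_{n})+o(1)$. This is precisely (\ref{eq:dupo}) along $\{y_{n}\}$, completing the converse. The only step that needs care is the bookkeeping around the rescaling: $\lambda$ depends on the \emph{fixed} bound $M$ of the given sequence, so it is constant along $n$; one checks $s_{n}\leq1$ eventually from $t_{n}\downarrow0$ and $z_{n}\in C$ from convexity of $C$. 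I do not anticipate a genuine obstacle, since the whole argument hinges on just two facts already available: convexity of $\rho$ converts the bound $M$ into membership in $U_{\varepsilon}(x)$, and affinity of $A$ (with $A(x)=0$) collapses $\lambda^{-1}A(z_{n})$ back to $A(y_{n})$.
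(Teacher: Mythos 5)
Your proof is correct and follows essentially the same route as the paper's: both rescale an arbitrary $\rho$-bounded sequence into the neighborhood via $z_{n}=(1-\lambda)x+\lambda y_{n}$ using convexity of $\rho$, reparametrize $s_{n}=t_{n}/\lambda$, and use affinity of $A$ with $A(x)=0$ to recover $A(y_{n})$ from $\lambda^{-1}A(z_{n})$. Your write-up is in fact slightly more careful than the paper's (explicitly recording $A(x)=0$ and the bookkeeping on $s_{n}$).
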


\noindent\textbf{Proof} An implication is obvious as $U_{\varepsilon}\left(
x\right)  $ is a $\rho$-bounded set. Suppose now that the limit (\ref{eq:dupo}%
) holds for each sequence in some neighborhood $U_{\varepsilon}\left(
x\right)  $. Fix a sequence $t_{n}\downarrow0$ and any $\rho$-bounded sequence
$\left\{  y_{n}\right\}  \subseteq C$. From the relation
\[
\rho\left(  x,\left(  1-\alpha\right)  x+\alpha y_{n}\right)  \leq\alpha
\rho\left(  x,y_{n}\right)
\]
it follows that, for $\alpha>0$ sufficiently small, the sequence
$z_{n}=\left(  1-\alpha\right)  x+\alpha y_{n}$ belongs to $U_{\varepsilon
}\left(  x\right)  $ for all $n$. By setting $\tau_{n}=t_{n}/\alpha$, we can
write
\[
\lim_{n\rightarrow\infty}\left[  \frac{f\left(  x+\tau_{n}\left(
z_{n}-x\right)  \right)  -f\left(  x\right)  }{\tau_{n}}-A\left(
z_{n}\right)  \right]  =0
\]
Coming back to the old variables, we get the limit (\ref{eq:dupo}). Observe
that $A\left(  z_{n}\right)  =A\left(  \left(  1-\alpha\right)  x+\alpha
y_{n}\right)  =\alpha A\left(  y_{n}\right)  $.\hfill$\blacksquare$

\bigskip

Let us now introduce Frechet wa-differentiability. The convexity of the metric
$\rho$ is a convenient assumption as it will be readily seen (see Proposition
\ref{prop:fafa} below).

\begin{definition}
A function $f:C\rightarrow\mathbb{R}$ is \emph{Frechet wa-differentiable} at
$x\in C$ if there exists a $\rho$-continuous and affine functional
$A:C\rightarrow\mathbb{R}$, with $A\left(  x\right)  =0$, such that%
\begin{equation}
\lim_{n\rightarrow\infty}\frac{f\left(  x_{n}\right)  -f\left(  x\right)
-A\left(  x_{n}\right)  }{\rho\left(  x_{n},x\right)  }=0 \label{eq:hap}%
\end{equation}
for every sequence $\left\{  x_{n}\right\}  $ in $C$ with $\rho\left(
x_{n},x\right)  \rightarrow0$.
\end{definition}

We can also formulate a \textquotedblleft strict\textquotedblright\ version in
which%
\begin{equation}
\lim_{n\rightarrow\infty}\frac{f\left(  y_{n}\right)  -f\left(  x_{n}\right)
-A\left(  y_{n}\right)  }{\rho\left(  x_{n},y_{n}\right)  }=0 \label{eq:strfr}%
\end{equation}
holds for all $\rho\left(  x_{n},x\right)  \rightarrow0$ and $\rho\left(
y_{n},x\right)  \rightarrow0$.

\begin{proposition}
\label{prop:fafa}Let $\rho$ be convex. If $f:C\rightarrow\mathbb{R}$ is
Frechet wa-differentiable at $x\in C$, then it is boundedly wa-differentiable
at $x$.
\end{proposition}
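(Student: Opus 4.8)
The plan is to show that the Frechet wa-differential $A$ itself serves as the affine functional witnessing bounded wa-differentiability at $x$; since $A$ is by hypothesis $\rho$-continuous and affine with $A(x)=0$, it is already an admissible candidate. Fix a sequence $t_n\downarrow 0$ and a $\rho$-bounded sequence $\{y_n\}\subseteq C$, and set $x_n=(1-t_n)x+t_ny_n$. Because $A$ is affine and $A(x)=0$, one has $A(x_n)=(1-t_n)A(x)+t_nA(y_n)=t_nA(y_n)$, so the quantity controlled in the definition of bounded wa-differentiability rewrites as
\[
\frac{f(x_n)-f(x)}{t_n}-A(y_n)=\frac{f(x_n)-f(x)-A(x_n)}{t_n},
\]
i.e.\ the Frechet residual divided by $t_n$ rather than by $\rho(x_n,x)$.

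The first step is to control $\rho(x_n,x)$ using convexity of $\rho$ (Definition \ref{def:con}). Writing $x_n=t_ny_n+(1-t_n)x$ and applying convexity in the second argument with $\alpha=t_n$, together with $\rho$-boundedness of $\{y_n\}$ — say $\rho(y_n,x)\le M$ for all $n$ — I would obtain
\[
\rho(x_n,x)=\rho\bigl(x,(1-t_n)x+t_ny_n\bigr)\le t_n\rho(y_n,x)\le M t_n.
\]
In particular $\rho(x_n,x)\to 0$, so $\{x_n\}$ is an admissible sequence for the Frechet limit, and moreover the ratio $\rho(x_n,x)/t_n\le \rho(y_n,x)\le M$ stays bounded.

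The second step is simply to factor the rewritten residual as
\[
\frac{f(x_n)-f(x)-A(x_n)}{t_n}=\frac{f(x_n)-f(x)-A(x_n)}{\rho(x_n,x)}\cdot\frac{\rho(x_n,x)}{t_n}.
\]
The first factor tends to $0$ by Frechet wa-differentiability at $x$ (as $\rho(x_n,x)\to 0$), while the second is bounded by $M$; hence the product tends to $0$, which is exactly the limit (\ref{eq:dupo}) required for bounded wa-differentiability.

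The only delicate point — the main obstacle — is the degenerate indices where $x_n=x$, for which $\rho(x_n,x)=0$ and the factorization above is not literally defined. I would dispose of these separately: when $x_n=x$ the numerator $f(x_n)-f(x)-A(x_n)$ vanishes (using $A(x)=0$), so that term of the bounded difference quotient is identically $0$ and contributes nothing. The factorization argument is then applied along the subsequence of indices with $x_n\neq x$, and combining the two cases yields that the full sequence converges to $0$, completing the proof.
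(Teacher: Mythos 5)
Your proof is correct and follows essentially the same route as the paper's: both use the convexity of $\rho$ to get $\rho\left(  x,\left(  1-t_{n}\right)  x+t_{n}y_{n}\right)  \leq t_{n}\rho\left(  x,y_{n}\right)  $, then write the bounded difference quotient as the Frechet remainder times the bounded ratio $\rho\left(  x_{n},x\right)  /t_{n}$ and let the $o(1)$ from (\ref{eq:hap}) do the work. Your explicit treatment of the degenerate indices with $x_{n}=x$ is a small point the paper's $o(1)$ bookkeeping passes over silently, but it changes nothing of substance.
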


\noindent\textbf{Proof} Assume (\ref{eq:hap}). Given a sequence $t_{n}%
\downarrow0$ and any bounded sequence $\left\{  x_{n}\right\}  $, set
$z_{n}=\left(  1-t_{n}\right)  x+t_{n}x_{n}$. Thanks to the convexity of
$\rho$, we have $\rho\left(  x,z_{n}\right)  \leq t_{n}\rho\left(
x,x_{n}\right)  $. Therefore, $\rho\left(  x,z_{n}\right)  \rightarrow0$. In
light of (\ref{eq:hap}),%
\[
f\left(  z_{n}\right)  -f\left(  x\right)  -A\left(  z_{n}\right)
=\rho\left(  z_{n},x\right)  o\left(  1\right)
\]
and
\[
f\left(  \left(  1-t_{n}\right)  x+t_{n}x_{n}\right)  -f\left(  x\right)
-t_{n}A\left(  x_{n}\right)  =t_{n}\left[  \frac{\rho\left(  z_{n},x\right)
}{t_{n}}\right]  o\left(  1\right)
\]
But $t_{n}^{-1}\rho\left(  z_{n},x\right)  \leq\rho\left(  x,x_{n}\right)  $,
which is bounded. Therefore,%
\[
f\left(  \left(  1-t_{n}\right)  x+t_{n}x_{n}\right)  -f\left(  x\right)
-t_{n}A\left(  x_{n}\right)  =o\left(  t_{n}\right)
\]
Hence, $f$ is boundedly wa-differentiable at $x$.\hfill$\blacksquare$

\bigskip

A consequence of this proposition is that the affine function $A$ in
(\ref{eq:hap}) is unique (if exists), with $D_{F}f\left(  x;\cdot\right)
=A\left(  \cdot\right)  $. Moreover, a Frechet wa-differentiable function
$f:C\rightarrow\mathbb{R}$ is Hadamard wa-differentiable and $D_{F}f\left(
x;\cdot\right)  =D_{H}f\left(  x;\cdot\right)  $.

Condition (\ref{eq:hap}) is like the classical Frechet condition for functions
on normed spaces, in which a distance replaces the norm. In fact,
(\ref{eq:hap}) can be written
\[
f\left(  y\right)  =f\left(  x\right)  +D_{F}f\left(  x;y\right)  +o\left(
\rho\left(  x,y\right)  \right)  \text{ \ as }\rho\left(  x,y\right)
\rightarrow0.
\]
When $D_{F}f\left(  x;\cdot\right)  $ is extendable, it becomes
\[
f\left(  y\right)  =f\left(  x\right)  +\left\langle \nabla_{F}f\left(
x\right)  ,y-x\right\rangle +o\left(  \rho\left(  y,x\right)  \right)
\]

Notice that there is no converse of Proposition \ref{prop:fafa} as it will be
momentarily seen.

\begin{example}
\emph{The quadratic functional }$Q:\Delta\left(  Y\right)  \rightarrow
\mathbb{R}$ \emph{given by}%
\begin{equation}
Q\left(  \mu\right)  =\int\psi\left(  x,y\right)  \mathrm{d}\mu\left(
x\right)  \otimes\mathrm{d}\mu\left(  y\right)  \label{eq:HHH}%
\end{equation}
\emph{is boundedly a-differentiable for any metric. Indeed this result is more
general. Consider a quadratic functional }$Q\left(  x\right)  =B\left(
x,x\right)  $ \emph{where }$B$\emph{ is a bilinear and symmetric form. For}
\emph{any sequence} $\left\{  y_{n}\right\}  $ \emph{and} $t_{n}\downarrow0$
\emph{we have}
\[
\frac{Q\left(  \left(  1-t_{n}\right)  x+t_{n}y_{n}\right)  -Q\left(
x\right)  }{t_{n}}-DQ\left(  x;y_{n}\right)  =t_{n}Q\left(  x\right)
+t_{n}Q\left(  y_{n}\right)  -2t_{n}B\left(  x,y_{n}\right)
\]
\emph{This quantity vanishes as }$n\rightarrow\infty$\emph{ provided
}$Q\left(  y_{n}\right)  $ \emph{and }$B\left(  x,y_{n}\right)  $ \emph{remain
bounded along any bounded sequence }$\left\{  y_{n}\right\}  $, \emph{as in
our case (\ref{eq:HHH})}. \emph{This fact does not imply that (\ref{eq:HHH})}
\emph{is always Frechet wa-differentiable. }

\emph{Let us show that the functional (\ref{eq:HHH})} \emph{is Frechet
a-differentiable for the variation metric. Indeed,}%
\[
Q\left(  \lambda_{n}\right)  -Q\left(  \mu\right)  -DQ\left(  \mu;\lambda
_{n}\right)  =Q\left(  \lambda_{n}\right)  +Q\left(  \mu\right)  -2B\left(
\mu,\lambda_{n}\right)
\]
\emph{Moreover, as }$Q$\emph{ can be extended to the whole space of signed
measures, we have further}
\[
Q\left(  \lambda_{n}\right)  -Q\left(  \mu\right)  -DQ\left(  \mu;\lambda
_{n}\right)  =Q\left(  \mu-\lambda_{n}\right)
\]
\emph{and so}
\begin{align*}
\left\vert Q\left(  \lambda_{n}-\mu\right)  \right\vert  &  =\left\vert
\int\psi\left(  x,y\right)  \mathrm{d}\left(  \lambda_{n}-\mu\right)
\otimes\mathrm{d}\left(  \lambda_{n}-\mu\right)  \right\vert =\left\vert
\int\mathrm{d}\left(  \lambda_{n}-\mu\right)  (y)\int\psi\left(  x,y\right)
\mathrm{d}\left(  \lambda_{n}-\mu\right)  \left(  x\right)  \right\vert \\
&  \leq\left\Vert \psi\right\Vert _{\infty}\left\vert \int\mathrm{d}\left(
\lambda_{n}-\mu\right)  (y)\int\mathrm{d}\left(  \lambda_{n}-\mu\right)
\left(  x\right)  \right\vert \leq\left\Vert \psi\right\Vert _{\infty
}\left\Vert \lambda_{n}-\mu\right\Vert _{V}^{2}%
\end{align*}
\emph{This implies}
\[
\frac{Q\left(  \lambda_{n}\right)  -Q\left(  \mu\right)  -DQ\left(
\mu;\lambda_{n}\right)  }{\rho_{V}\left(  \lambda_{n},\mu\right)  }%
\rightarrow0
\]
\emph{as} $\rho_{V}\left(  \lambda_{n},\mu\right)  \rightarrow0.$ \emph{Hence
(\ref{eq:hap}) holds.}\hfill

\emph{The functional (\ref{eq:HHH}) may fail to be Frechet wa-differentiable.
Take }$Y=\mathbb{R}$ \emph{and} \emph{ }$\psi\left(  x,y\right)  =f\left(
x\right)  \vee f\left(  y\right)  $\emph{,} \emph{where} $f:\mathbb{R}%
\rightarrow\mathbb{R}$ \emph{is non-constant and differentiable. Consider the
Prokhorov metric }$\rho_{P}$\emph{.} \emph{Pick a point} $x\in\mathbb{R}$
\emph{for which} $f^{\prime}\left(  x\right)  \neq0$\emph{, say }$f^{\prime
}\left(  x\right)  >0$ \emph{(the case }$f^{\prime}\left(  x\right)  <0$
\emph{is similar). Let }$y>x$\emph{ be sufficiently close to }$x$\emph{.
Then,}%
\[
Q\left(  \delta_{y}\right)  -Q\left(  \delta_{x}\right)  -DQ\left(  \delta
_{x};\delta_{y}\right)  =Q\left(  \delta_{y}-\delta_{x}\right)  =f\left(
x\right)  +f\left(  y\right)  -2f\left(  y\right)  =f\left(  x\right)
-f\left(  y\right)
\]
\emph{Since }$\rho_{P}\left(  \delta_{y},\delta_{x}\right)  =\min\left\{
\left\vert y-x\right\vert ;1\right\}  $\emph{, it follows that }%
\[
\lim_{x\downarrow x}\frac{Q\left(  \delta_{y}-\delta_{x}\right)  }{\rho
_{P}\left(  \delta_{y},\delta_{x}\right)  }=\lim_{x\downarrow x}\frac{f\left(
x\right)  -f\left(  y\right)  }{y-x}=-f^{\prime}\left(  x\right)  \neq0
\]
\emph{Hence, }$Q$\emph{ is not Frechet wa-differentiable at }$\delta_{x}%
$\emph{.}\hfill$\blacktriangle$
\end{example}

\begin{example}
\emph{Define} \emph{the convex and quadratic functional} $T:\mathcal{D}%
\rightarrow\mathbb{R}$ \emph{by}%
\[
T\left(  F\right)  =\int\left[  F\left(  x\right)  -F_{0}\left(  x\right)
\right]  ^{2}\mathrm{d}F_{0}\left(  x\right)
\]
\emph{where} $\mathcal{D}$ \emph{is the collection of all} \emph{probability
distributions on} $\mathbb{R}$. \emph{Given an empirical distributions }%
$F_{n}$\emph{, }$T\left(  F_{n}\right)  $\emph{ is the }Cramer-von Mises test
statistic\emph{ for the test problem }$H_{0}$\emph{: }$F=F_{0}$\emph{ versus
}$H_{1}$\emph{: }$F\neq F_{0}$\emph{.}

\emph{The wa-differential of }$T$ \emph{is}%
\[
DT\left(  F;G\right)  =2\int\left(  F-F_{0}\right)  \left(  G-F\right)
\mathrm{d}F_{0}\qquad\forall G\in\mathcal{D}%
\]
\emph{Notice that }$DT\left(  F_{0};\cdot\right)  =0$\emph{. Indeed, }$F_{0}$
\emph{minimizes }$T$\emph{. Let us show that, under the Kolmogorov metric
}$\rho_{\infty},$\emph{ }$T$\emph{ is strictly Frechet wa-differentiable at
every point }$F$\emph{ (see also \cite{shao}). Let }$\rho_{\infty}\left(
F_{n},F\right)  \rightarrow0$\emph{ and }$\rho_{\infty}\left(  G_{n},F\right)
\rightarrow0$\emph{ be two sequences. Some tedious algebra leads to}%
\[
R=T\left(  G_{n}\right)  -T\left(  F_{n}\right)  -DT\left(  F_{n}%
;G_{n}\right)  =\int\left(  G_{n}-F_{n}\right)  ^{2}\mathrm{d}F_{0}%
\]
\emph{Hence,}
\[
\left\vert R\right\vert \leq\left[  \rho_{\infty}\left(  G_{n},F_{n}\right)
\right]  ^{2}\leq\rho_{\infty}\left(  G_{n},F_{n}\right)  \left[  \rho
_{\infty}\left(  G_{n},F\right)  +\rho_{\infty}\left(  F,F_{n}\right)
\right]  =o\left(  \rho_{\infty}\left(  G_{n},F_{n}\right)  \right)
\]
\emph{which is (\ref{eq:strfr}).}\hfill$\blacktriangle$
\end{example}

Interpret the functional $\theta=T\left(  F\right)  $ as the parameter of an
unknown population, more generally $\theta=T\left(  \mu\right)  ,$ with
$\mu\in\Delta\left(  Y\right)  $. Inferences about $\theta$ are usually based
on the statistic $\hat{\theta}=T\left(  F_{n}\right)  $, where $F_{n}$ is the
empirical distribution%
\[
F_{n}=\frac{1}{n}\sum_{i=1}^{n}G_{x_{i}}%
\]
If $T$ is a-differentiable, we have (see (\ref{eq:FFT}))
\[
T\left(  F_{n}\right)  =T\left(  F\right)  +\left\langle \nabla_{a}T\left(
F\right)  ,F_{n}-F\right\rangle +R\left(  F;F_{n}\right)
\]
where $R\left(  F;F_{n}\right)  $ is the remainder. By setting $\nabla
_{a}T\left(  F\right)  =u_{F}\left(  x\right)  $ (the influence function,
i.e., the normalized gradient), we obtain
\[
T\left(  F_{n}\right)  =T\left(  F\right)  +\int u_{F}\mathrm{d}F_{n}+R\left(
F;F_{n}\right)  =T\left(  F\right)  +\frac{1}{n}\sum_{i=1}^{n}u_{F}\left(
x_{i}\right)  +R\left(  F;F_{n}\right)
\]
Suppose that an appropriately defined notion of differentiability guarantees
that%
\[
R\left(  F;F_{n}\right)  =o_{p}\left(  \frac{1}{\sqrt{n}}\right)
\]
The normalization ensures that the mean value\emph{ }$\int u_{F}\left(
x\right)  \mathrm{d}F\left(  x\right)  $ is zero$.$ By assuming that
$\sigma^{2}=\int u_{F}^{2}\left(  x\right)  \mathrm{d}F\left(  x\right)  $ is
finite, then%
\[
\sqrt{n}\left[  T\left(  F_{n}\right)  -T\left(  F\right)  \right]  =\frac
{1}{\sqrt{n}}\sum_{i=1}^{n}u_{F}\left(  x_{i}\right)  +o_{p}\left(  1\right)
\]
Slutsky's Lemma and the central limit theorem imply the asymptotic normality,
i.e.,
\[
\sqrt{n}\left[  T\left(  F_{n}\right)  -T\left(  F\right)  \right]
\rightarrow_{d}\mathcal{N}\left(  0,\sigma^{2}\right)
\]

Next we consider a simple case (see, e.g., \cite{fern}).

\begin{proposition}
Assume that for some metric $\rho$ we have
\[
\rho\left(  F_{n},F\right)  =O_{p}\left(  \frac{1}{\sqrt{n}}\right)
\]
If $T$ is Frechet differentiable at $F$, with normalized gradient $\nabla
T\left(  F\right)  =u_{F}$, then%
\[
\sqrt{n}\left[  T\left(  F_{n}\right)  -T\left(  F\right)  \right]
\rightarrow_{d}\mathcal{N}\left(  0,\sigma^{2}\right)
\]

\end{proposition}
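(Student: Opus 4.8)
The plan is to convert the Fréchet remainder into a stochastically negligible term and then invoke the central limit theorem together with Slutsky's Lemma. I would begin from the expansion recorded just above the statement, namely
\[
T\left(  F_{n}\right)  =T\left(  F\right)  +\frac{1}{n}\sum_{i=1}^{n}u_{F}\left(  x_{i}\right)  +R\left(  F;F_{n}\right),
\]
in which the normalization $\int u_{F}\mathrm{d}F=0$ has been used to rewrite the linear term $\left\langle \nabla_{a}T\left(  F\right)  ,F_{n}-F\right\rangle =\int u_{F}\mathrm{d}\left(  F_{n}-F\right)$ as $\frac{1}{n}\sum_{i=1}^{n}u_{F}\left(  x_{i}\right)$. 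Multiplying through by $\sqrt{n}$,
\[
\sqrt{n}\left[  T\left(  F_{n}\right)  -T\left(  F\right)  \right]  =\frac{1}{\sqrt{n}}\sum_{i=1}^{n}u_{F}\left(  x_{i}\right)  +\sqrt{n}\,R\left(  F;F_{n}\right),
\]
so the whole matter reduces to the two summands on the right.

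For the leading term, the random variables $u_{F}\left(  x_{1}\right)  ,u_{F}\left(  x_{2}\right)  ,\dots$ are i.i.d., being a fixed bounded continuous function $u_{F}\in C_{b}\left(  \mathbb{R}\right)$ evaluated at the i.i.d. observations; their common mean is $\int u_{F}\mathrm{d}F=0$ by the normalization and their common variance is $\sigma^{2}=\int u_{F}^{2}\mathrm{d}F<\infty$. The classical central limit theorem then gives $\frac{1}{\sqrt{n}}\sum_{i=1}^{n}u_{F}\left(  x_{i}\right)  \rightarrow_{d}\mathcal{N}\left(  0,\sigma^{2}\right)$.

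The crux is to show $\sqrt{n}\,R\left(  F;F_{n}\right)  =o_{p}\left(  1\right)$. Here I would use the Fréchet condition (\ref{eq:hap}), in its a-differentiable form $T\left(  G\right)  =T\left(  F\right)  +\left\langle \nabla_{a}T\left(  F\right)  ,G-F\right\rangle +o\left(  \rho\left(  G,F\right)  \right)$, but read \emph{deterministically}: setting $r\left(  \delta\right)  =\sup_{0<\rho\left(  G,F\right)  \leq\delta}\left\vert R\left(  F;G\right)  \right\vert /\rho\left(  G,F\right)$, Fréchet differentiability at $F$ says exactly that the nondecreasing function $r$ satisfies $r\left(  \delta\right)  \rightarrow0$ as $\delta\downarrow0$. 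Since $\rho\left(  F_{n},F\right)  =O_{p}\left(  1/\sqrt{n}\right)$ forces $\rho\left(  F_{n},F\right)  \rightarrow_{p}0$, for every $\varepsilon>0$ I fix $\delta$ with $r\left(  \delta\right)  \leq\varepsilon$ and estimate
\[
\mathbb{P}\left(  \frac{\left\vert R\left(  F;F_{n}\right)  \right\vert }{\rho\left(  F_{n},F\right)  }>\varepsilon\right)  \leq\mathbb{P}\left(  \rho\left(  F_{n},F\right)  >\delta\right)  \rightarrow0,
\]
so that $R\left(  F;F_{n}\right)  =o_{p}\left(  \rho\left(  F_{n},F\right)  \right)$. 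Factoring
\[
\sqrt{n}\,R\left(  F;F_{n}\right)  =\frac{R\left(  F;F_{n}\right)  }{\rho\left(  F_{n},F\right)  }\cdot\sqrt{n}\,\rho\left(  F_{n},F\right)
\]
then exhibits $\sqrt{n}\,R\left(  F;F_{n}\right)$ as a product of an $o_{p}\left(  1\right)$ factor and the $O_{p}\left(  1\right)$ factor $\sqrt{n}\,\rho\left(  F_{n},F\right)$, whence $\sqrt{n}\,R\left(  F;F_{n}\right)  =o_{p}\left(  1\right)$.

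It remains to combine the two summands by Slutsky's Lemma: the sum of a sequence converging in distribution to $\mathcal{N}\left(  0,\sigma^{2}\right)$ and a term that tends to $0$ in probability converges in distribution to $\mathcal{N}\left(  0,\sigma^{2}\right)$, which is the claim. The one genuinely delicate point, and the step I expect to be the main obstacle, is precisely the passage from the analytic estimate $R=o\left(  \rho\right)$ to the stochastic statement $\sqrt{n}\,R=o_{p}\left(  1\right)$; routing it through the monotone envelope $r$ keeps this clean and avoids any appeal to continuity of the remainder.
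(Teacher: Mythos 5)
Your proof is correct and follows essentially the same route as the paper's: the same decomposition $\sqrt{n}\left[T\left(F_{n}\right)-T\left(F\right)\right]=\frac{1}{\sqrt{n}}\sum_{i=1}^{n}u_{F}\left(x_{i}\right)+\sqrt{n}\,R\left(F;F_{n}\right)$, the same factoring of the remainder as $o_{p}\left(1\right)\cdot O_{p}\left(1\right)$, and the CLT plus Slutsky. The only difference is that where the paper simply writes the remainder as $\rho\left(F_{n},F\right)o\left(1\right)$ and treats the $o\left(1\right)$ informally, you make the passage from the deterministic Fréchet estimate to the stochastic statement $R\left(F;F_{n}\right)=o_{p}\left(\rho\left(F_{n},F\right)\right)$ rigorous via the monotone envelope $r\left(\delta\right)$ — a welcome tightening rather than a different argument.
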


\noindent\textbf{Proof} By (\ref{eq:hap}),%
\begin{align*}
T\left(  F_{n}\right)  -T\left(  F\right)   &  =\int u_{F}dF_{n}+\rho\left(
F_{n},F\right)  o\left(  1\right) \\
\sqrt{n}\left[  T\left(  F_{n}\right)  -T\left(  F\right)  \right]   &
=\frac{1}{\sqrt{n}}\sum_{i=1}^{n}u_{F}\left(  x_{i}\right)  +\frac{\sqrt
{n}O_{p}(1)o\left(  1\right)  }{\sqrt{n}}=\frac{1}{\sqrt{n}}\sum_{i=1}%
^{n}u_{F}\left(  x_{i}\right)  +o\left(  1\right)
\end{align*}
as desired.\hfill$\blacksquare$

\end{document}